\tikzset{
	labl/.style={anchor=south, rotate=90, inner sep=.5mm}
}
\newcommand{\Q}{\mathbb{Q}}
\newcommand{\C}{\mathbb{C}}
\newcommand{\F}{\mathbb{F}}
\newcommand{\Ga}{\mathbb{G}_a}
\newcommand{\KK}{\mathbb{K}}
\newcommand{\Z}{\mathbb{Z}}
\newcommand{\N}{\mathbb{N}}
\newcommand{\Crys}{\mathbf{Crys}}
\newcommand{\bfC}{\mathbf{C}}
\newcommand{\Foi}{\Fisoc^\mathrm{\dag}}
\newcommand{\oi}{\Isoc^\mathrm{\dag}}
\newcommand{\iso}{\xrightarrow{\sim}}
\newcommand{\Zp}{\mathbb{Z}_p}
\newcommand{\Qp}{\mathbb{Q}_p}
\newcommand{\Qlbar}{\overline{\mathbb{Q}}_{\ell}}
\newcommand{\Qpbar}{\overline{\mathbb{Q}}_{p}}
\newcommand{\et}{\mathrm{\acute{e}t}}
\newcommand{\tors}{\mathrm{tors}}
\newcommand{\crys}{\mathrm{crys}}
\newcommand{\perf}{\mathrm{perf}}
\newcommand{\Fq}{\F_q}
\newcommand{\frakA}{\mathfrak{A}}
\newcommand{\calE}{\mathcal{E}}
\newcommand{\calF}{\mathcal{F}}
\newcommand{\calG}{\mathcal{G}}
\newcommand{\calO}{\mathcal{O}}
\newcommand{\calT}{\mathcal{T}}
\newcommand{\pure}{\mathrm{pure}}
\newcommand{\pured}{\mathrm{pure}^{\dagger}}
\newcommand{\Spec}{\mathrm{Spec}}
\newcommand{\Hom}{\mathrm{Hom}}
\newcommand{\Ext}{\mathrm{Ext}}
\newcommand{\id}{\mathrm{id}}
\newcommand{\Tr}{\mathrm{Tr}}
\newcommand{\Fcrys}{\mathbf{F\textrm{-}Crys}}
\newcommand{\cst}{\mathrm{cst}}
\newcommand{\rk}{\mathrm{rk}}
\newcommand{\Fisoc}{\mathbf{F\textrm{-}Isoc}}
\newcommand{\Isoc}{\mathbf{Isoc}}
\newcommand{\etale}{\'etale }
\begin{document}

	\newtheorem{theo}[subsubsection]{Theorem}
	\newtheorem*{theo*}{Theorem}
	\newtheorem{ques}[subsubsection]{Question}
	\newtheorem*{ques*}{Question}
	\newtheorem{conj}[subsubsection]{Conjecture}
	\newtheorem{prop}[subsubsection]{Proposition}
	\newtheorem{lemm}[subsubsection]{Lemma}
	\newtheorem*{lemm*}{Lemma}
	\newtheorem{coro}[subsubsection]{Corollary}
	\newtheorem*{coro*}{Corollary}

	\theoremstyle{definition}
	\newtheorem{defi}[subsubsection]{Definition}
	\newtheorem*{defi*}{Definition}
	
	\newtheorem{rema}[subsubsection]{Remark}
	\newtheorem{exam}[subsubsection]{Example}
	\newtheorem{nota}[subsubsection]{Notation}
	\newtheorem{cons}[subsubsection]{Construction}
	
	\numberwithin{equation}{subsubsection}
	
	\title[Maximal tori of monodromy groups of $F$-isocrystals and an application]{Maximal tori of monodromy groups of $F$-isocrystals and an application to abelian varieties}
	\date{\today}
	\makeatletter
	\@namedef{subjclassname@2020}{%
		\textup{2020} Mathematics Subject Classification}
	\makeatother
	
\subjclass[2020]{14F30,14K15}

    \keywords{Isocrystals, slope filtration, abelian varieties}

	\author{Emiliano Ambrosi and Marco D'Addezio}
	\address{Institut de Recherche Mathématique Avancée, Université de Strasbourg - 7 rue René Descartes, Office 207, 67084 Strasbourg (France)}
	\email{eambrosi@unistra.fr}
\address{Institut de Mathématiques de Jussieu-Paris Rive Gauche, Sorbonne Université - 4 place Jussieu, Case 247, 75005 Paris (France)}
\email{daddezio@imj-prg.fr}
	
	\begin{abstract}
		Let $X_0$ be a smooth geometrically connected variety defined over a finite field $\F_q$ and let $\mathcal E_0^{\dagger}$ be an irreducible overconvergent $F$-isocrystal on $X_0$. We show that if a subobject of minimal slope of the associated convergent $F$-isocrystal $\mathcal E_0$ admits a non-zero morphism to $\calO_{X_0}$ as a convergent isocrystal, then $\mathcal E_0^{\dagger}$ is isomorphic to $\calO^{\dagger}_{X_0}$ as an overconvergent isocrystal. This proves a special case of a conjecture of Kedlaya. The key ingredient in the proof is the study of the monodromy group of $\mathcal E_0^{\dagger}$ and the subgroup defined by $\mathcal E_0$. The new input in this setting is that the subgroup contains a maximal torus of the entire monodromy group. This is a consequence of the existence of a Frobenius torus of maximal dimension. As an application, we prove a finiteness result for the torsion points of abelian varieties, which extends the previous theorem of Lang--Néron and answers positively a question of Esnault. 
		
	\end{abstract}
	
	\maketitle

	\tableofcontents
	\section{Introduction}
	
	\subsection{Convergent and overconvergent isocrystals}Let $p$ be a prime number. The first Weil cohomology constructed to study varieties in characteristic $p$ was the \textit{$\ell$-adic \etale cohomology}, where $\ell$ is a prime different from $p$. Its associated category of coefficients is the category of \textit{lisse sheaves}. While $p$-adic \etale cohomology is not a Weil cohomology, moving from $\ell$ to $p$ one encounters two main $p$-adic cohomology theories: \textit{crystalline cohomology} and \textit{rigid cohomology}. These two give rise to different categories of “local systems”: \textit{convergent isocrystals} and\textit{ overconvergent isocrystals}. Let $\Fq$ be a finite field with $q$ elements, where $q$ is a power of $p$, and let $X_0$ be a smooth variety over $\Fq$. We write $\Fisoc(X_0)$ (resp. $\Foi(X_0)$) for the category of $\Qpbar$-linear convergent (resp. overconvergent) $F$-isocrystals over $X_0$. By \cite{Ked04}, these two categories are related by a natural fully faithful functor $\epsilon:\Foi(X_0)\rightarrow \Fisoc(X_0)$. When $X_0$ is proper, the functor $\epsilon$ is an equivalence. In general, the two categories have different behaviours. While $\Foi(X_0)$ shares many properties with the category of Weil lisse $\Qlbar$-sheaves, as explained in \cite{Dad} and \cite{Ked18}, the category $\Fisoc(X_0)$ has some exceptional $p$-adic features.
	
	For example, for every $\calE_0\in\Fisoc(X_0)$, after possibly shrinking $X_0$ to a dense open, there exists a filtration $$0=\mathcal E_0^{0}\subseteq \mathcal E_0^{1} \subseteq ...\subseteq \mathcal E_0^{n}=\mathcal E_0$$ where for each $i$ the quotient $\mathcal E_0^{i+1}/\mathcal E_0^{i}$ has uniquely slope $s_i$ at closed points and the sequence $s_1,\dots,s_n$ is increasing (see \cite{Katz79} and \cite[Corollary 4.2]{Ked16}). When $\calE_0=\epsilon(\calE_0^\dagger)$ for some $\calE_0^\dagger\in \Foi(X_0)$, the subobjects $\calE_0^i$ in general are not in the essential image of $\epsilon$ as well (see \cite[Remark 5.12]{Ked16}). Our main result highlights a new relationship between the subquotients of $\mathcal E^{\dagger}_0$ in $\Foi(X_0)$ and the ones of $\mathcal E_0^1$ in $\Fisoc(X_0)$.
	\begin{theo}[Theorem \ref{key:t}]\label{intro:t1}
		Let $\calE_0^{\dagger}$ be an irreducible $\Qpbar$-linear overconvergent $F$-isocrystal. If $\calE_0:=\epsilon(\calE_0^\dagger)$ admits a subobject of minimal slope $\calF_0\subseteq \calE_0$ with a non-zero morphism $\calF_0\to\calO_{X_0}$ of convergent isocrystals, then $\calE_0^{\dagger}$ has rank $1$.
	\end{theo}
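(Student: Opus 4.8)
The plan is to translate the statement into a question about Tannakian monodromy groups and reduce it to a group‑theoretic fact about maximal tori. Fix a fibre functor and let $G\subseteq\GL(V)$ be the monodromy group of $\calE_0^{\dagger}$ in $\Foi(X_0)$, where $V$ is the underlying $\Qpbar$-vector space. Since $\calE_0^{\dagger}$ is irreducible, $V$ is an irreducible $G$-representation, so $G$ is reductive (its unipotent radical, being normal, fixes a nonzero and hence, by irreducibility, all of $V$, so it is trivial); what we must show is that $\dim V=1$, i.e.\ that $G$ is commutative. Because $\calE_0=\epsilon(\calE_0^{\dagger})$ together with its dual generates a Tannakian subcategory of $\Fisoc(X_0)$ each object of which is a subquotient of a sum of objects $\epsilon(M)$ with $M\in\langle\calE_0^{\dagger}\rangle$, Tannakian duality identifies the monodromy group $H$ of $\calE_0$ with a closed subgroup of $G$ inside $\GL(V)$. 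The slope filtration of $\calE_0$ is $H$-stable, so $H$ lies in the parabolic $P\subseteq G$ stabilising the induced flag of $V$, the minimal-slope piece $\calF_0$ being a sub-$H$-representation $W\subseteq V$; and the hypothesis $\Hom(\calF_0,\calO_{X_0})\neq 0$ says exactly that the trivial object is a subquotient of $W$ for the convergent-isocrystal monodromy group of $\calE_0$, a closed subgroup of $H$.

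The decisive new ingredient is that \emph{$H$ contains a maximal torus of $G$}. One produces a closed point $x\in|X_0|$ with a Frobenius torus of maximal dimension: the geometric Frobenius $\phi_x$ acts on $V$ as an element of $G(\Qpbar)$, since the monodromy group of an $F$-isocrystal contains its Frobenii, and we let $T_x\subseteq G$ be the Zariski closure of the group generated by the semisimple part $\phi_x^{\mathrm{ss}}$ of $\phi_x$. A Chebotarev-type equidistribution argument together with a dimension count — the $p$-adic analogue of Serre's theorem on Frobenius tori — shows that for $x$ in a density-one set of closed points $T_x$ is a maximal torus of $G^{\circ}$, hence of $G$. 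Since $\calE_0$ carries the very same Frobenius structure, $\phi_x$ also lies in $H(\Qpbar)$, so $T:=T_x\subseteq H$; conjugating, we may assume $T\subseteq P$, so that $T$ is simultaneously a maximal torus of $P$.

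To conclude, note that since the trivial object is a subquotient of $W$ for $H$, a fortiori for $T$, the weight $0$ of $T$ occurs among the $T$-weights appearing in $W$. Over $x$ the slope filtration of $\calE_0$ is the filtration of $V$ by pairing with the Newton cocharacter $\nu\in X_*(T)_{\Q}$ (the $p$-adic valuations of the eigenvalues of $\phi_x^{\mathrm{ss}}\in T$ define $\nu$), so, $W$ being isoclinic of minimal slope, the presence of a weight-$0$ vector in $W$ forces that minimal slope to be $0$, i.e.\ $\langle\nu,\chi\rangle\ge 0$ for every $T$-weight $\chi$ of $V$. Now if $G$ were not a torus then, $V$ being an irreducible faithful representation, the connected centre of $G$, which lies in $T$, would act on $V$ through a character forced to vanish (as $V$ has a weight-$0$ vector), hence trivially, hence would be trivial by faithfulness; so $G$ is semisimple. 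But then the weight polytope of the nontrivial irreducible $V$ is full-dimensional in $X^{*}(T)_{\Q}$ with the origin in its interior, so $\langle\nu,\cdot\rangle$ takes a negative value on some weight unless $\nu=0$. Hence $\nu=0$: $\calE_0$ is isoclinic and the slope flag is trivial, a case dispatched separately (using full faithfulness of $\epsilon$, simplicity of $\calO_{X_0}$, and, after a Frobenius twist, Tsuzuki's theorem on unit-root objects) and again yielding $\calE_0^{\dagger}\cong\calO_{X_0}^{\dagger}$. In all cases $G$ is commutative and $\dim V=1$. I expect the main obstacle to be the second step — constructing the Frobenius torus of maximal dimension for overconvergent $F$-isocrystals — together with the intertwined bookkeeping that guarantees the torus so obtained really lands inside the subgroup cut out by $\calE_0$ (which requires keeping careful track of arithmetic versus geometric monodromy and of $F$-isocrystals versus bare isocrystals); the final group-theoretic deduction is comparatively formal.
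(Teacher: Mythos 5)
Your overall strategy (Frobenius tori of maximal dimension, the convergent monodromy group as a maximal-rank subgroup, then a slope/weight argument reducing to the unit-root case) is the same as the paper's, but there is a genuine gap at the decisive step, and it sits exactly where the paper has to work hardest. Your hypothesis $\Hom(\calF,\mathbbm 1)\neq 0$ is a statement about the \emph{geometric} monodromy group $G(\calE)$ (the morphism $\calF_0\to\calO_{X_0}$ is only a morphism of convergent isocrystals, not of $F$-isocrystals), and you correctly say so when you set things up: the trivial object is a subquotient of $W$ for "the convergent-isocrystal monodromy group of $\calE_0$, a closed subgroup of $H$". But in the concluding paragraph you silently upgrade this to "the trivial object is a subquotient of $W$ for $H$, a fortiori for $T$". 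That inference is invalid: the Frobenius torus $T=T_x$ is built from $\phi_x$ and lies in the \emph{arithmetic} group $H=G(\calE_0)$, with no containment relation to $G(\calE)$ (for a constant $F$-isocrystal with Frobenius of infinite order, $G(\calE)$ is trivial while $T_x$ can be a full $\Gm$). So the weight $0$ of $T$ need not occur in $W$, and the entire Newton-cocharacter computation after that point is unsupported. Bridging arithmetic and geometric monodromy here is precisely the content of the paper's Theorem \ref{rank:t} ($G(\calE)$ contains a maximal torus of $G(\calE^{\dagger})$), whose proof requires embedding $\calE_0^{\dagger}$ into an auxiliary object $\widetilde{\calE}_0^{\dagger}$ so that $G(\widetilde{\calE}_0)^{\cst}\to G(\widetilde{\calE}_0^{\dagger})^{\cst}$ becomes an isomorphism; from this the paper deduces that $G(\calE_0)^{\cst}$ is finite (Proposition \ref{constant-objects:p}), hence that the maximal trivial quotient of $\calF$ lifts to a \emph{finite} constant $F$-isocrystal quotient of $\calF_0$, which is what actually forces the minimal slope to be $0$. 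You name this as "the main obstacle" in your closing sentence, but naming it is not resolving it.

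Two smaller points. First, the existence of maximal Frobenius tori (Theorem \ref{frob-tori:t}) is only available for \emph{algebraic} overconvergent $F$-isocrystals; for an irreducible $\calE_0^{\dagger}$ this requires first twisting so that the determinant has finite order and invoking Deligne's conjecture, a reduction you omit (it is harmless but must be said, and it is also what makes the sum-of-slopes argument work). Second, your endgame in the isoclinic case ("full faithfulness of $\epsilon$, simplicity of $\calO_{X_0}$, and Tsuzuki's theorem") matches the paper's use of \cite[Theorem 3.9]{Ked16} and is fine once the slope is known to be $0$; and your group-theoretic finale (semisimplicity plus nonnegativity of $\langle\nu,\cdot\rangle$ on the weight polytope forces $\nu=0$) is a legitimate, somewhat different packaging of the paper's "isoclinic with a finite quotient implies unit-root" step --- but it, too, depends on the unestablished weight-$0$ claim.
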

 Theorem \ref{intro:t1} proves a particular case of the conjecture in \cite[Remark 5.14]{Ked16}. Even though  the conjecture turned out to be false in general, Theorem \ref{intro:t1} corresponds to the case when $\calF_1\subseteq \calE_1$ has minimal slope and $\calE_2$ is the convergent isocrystal $\calO_{X_0}$ endowed with some Frobenius structure (notation as in [\textit{ibid.}]).
 Recently Tsuzuki in \cite{Tsuzuki} proved (a modified version of) Kedlaya's conjecture over finite fields. In particular, he gave an alternative proof of Theorem \ref{intro:t1}. Our proof is different and independent. See Remark \ref{Relationships:r} for some more details on the differences.

	\subsection{Torsion points of abelian varieties}\label{perfect-p-tor:ss}
	Before explaining the main ingredients of the proof of Theorem \ref{intro:t1}, let us describe an application to torsion points of abelian varieties. This was our main motivation to prove Theorem \ref{intro:t1}.
	Let $\F$ be an algebraic closure of $\Fq$ and $\F\subseteq k$ be a finitely generated field extension. For an abelian variety $A$ over $k$ we write $\Tr_{k/\F}(A)$ for its \textit{$k/\F$-trace} (cf. \cite[§6]{Conrad}). Recall the classical Lang--Néron theorem (see \cite{LN} or \cite{Conrad}).
	\begin{theo}[Lang--Néron] \label{Lang-Neron:t}
		If $A$ is an abelian variety over $k$ such that $\Tr_{k/\F}(A)=0$, then the group $A(k)$ is finitely generated.  
	\end{theo}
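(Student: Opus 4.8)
The plan is to carry out the classical two-part proof of the Mordell--Weil theorem in the function-field setting, invoking the hypothesis $\Tr_{k/\F}(A)=0$ precisely where one needs finiteness of rational points of bounded height. As a preliminary reduction, by choosing a normal projective geometrically connected model of $k$ over $\F$ and, when $\mathrm{trdeg}_{\F}k>1$, restricting heights to a general complete-intersection curve in it (equivalently, by induction on transcendence degree), one reduces to the case where $k=\F(B)$ is the function field of a smooth projective geometrically connected curve $B/\F$. After spreading out, $A$ extends to an abelian scheme $\calA\to U$ over a dense open $U\subseteq B$ with N\'eron model $\calA^{\mathrm{Ner}}\to B$; since $B$ is a smooth curve the N\'eron mapping property gives $A(k)=\calA^{\mathrm{Ner}}(B)$, so every $P\in A(k)$ is a section $\sigma_P\colon B\to\calA^{\mathrm{Ner}}$.

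First I would prove the weak Mordell--Weil theorem: $A(k)/mA(k)$ is finite for a fixed integer $m\geq 2$ coprime to $p$. Via the Kummer sequence, $A(k)/mA(k)$ embeds into $H^1(k,A[m])$, and, using good reduction over $U$ together with $\gcd(m,p)=1$, the classes in the image are unramified outside $S=B\setminus U$, hence come from $H^1(\pi_1^{\et}(U),A[m])$. As $U$ is a smooth affine curve over the algebraically closed field $\F$, the prime-to-$p$ quotient of $\pi_1^{\et}(U)$ is topologically finitely generated, so $H^1(\pi_1^{\et}(U),M)$ is finite for every finite module $M$ of order prime to $p$; thus $A(k)/mA(k)$ is finite.

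Next I would build a theory of heights on $A(k)$. Fix a symmetric ample line bundle $L$ on $A$, spread it out to a line bundle $\mathcal{L}$ on $\calA^{\mathrm{Ner}}$ which is relatively ample over $U$, and for $P\in A(k)$ set $h(P)=\deg_B(\sigma_P^*\mathcal{L})\in\Z$. The theorem of the cube, applied to $\calA^{\mathrm{Ner}}\times_B\calA^{\mathrm{Ner}}\times_B\calA^{\mathrm{Ner}}$, shows that $h$ satisfies the parallelogram law up to a bounded error, whence the two standard estimates: (a) for every $Q$ there is $C_Q$ with $h(P+Q)\leq 2h(P)+C_Q$ for all $P$; and (b) there is $C$ with $h(mP)\geq m^2h(P)-C$ for all $P$. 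The decisive point is the finiteness (c): for every $T$, the set $\{P\in A(k):h(P)\leq T\}$ is finite. Indeed, points of bounded height are exactly the $\F$-points of a scheme $\Sigma_T$ of finite type over $\F$ parametrizing sections of $\calA^{\mathrm{Ner}}\to B$ of degree $\leq T$; if some component of $\Sigma_T$ were positive-dimensional, it would yield a non-constant family of such sections over a curve $C/\F$, and passing to the generic point of $B$ and rigidifying by a translation one would obtain a non-zero homomorphism $\mathrm{Jac}(C)_k\to A$, i.e.\ a non-zero $k/\F$-trace, contradicting $\Tr_{k/\F}(A)=0$. Hence $\Sigma_T$ is $0$-dimensional, so finite, proving (c).

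Finally, the classical infinite descent concludes: choose representatives $Q_1,\dots,Q_r$ of the finite group $A(k)/mA(k)$; for an arbitrary $P$, write $P=mP'+Q_i$, so (a) and (b) give $h(P')\leq \tfrac{1}{m^2}\bigl(2h(P)+\max_j C_{-Q_j}+C\bigr)$, which is $<h(P)$ as soon as $h(P)$ exceeds an explicit threshold $T_0$ (using $m^2>2$). Since $h$ is $\Z$-valued and bounded below, iterating this terminates and exhibits $A(k)$ as generated by $Q_1,\dots,Q_r$ together with the finite set $\{P:h(P)\leq T_0\}$; hence $A(k)$ is finitely generated. I expect the main obstacle to be the height step, and inside it the implication that $\Tr_{k/\F}(A)=0$ forces (c): this relies on the theory of the Chow trace and a rigidity argument for positive-dimensional families of rational points, and is the genuinely new input beyond the cohomological finiteness of the first step and the purely formal descent of the last. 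Setting up a well-behaved height on the N\'eron model and controlling the error terms over the bad fibres $S$ is a further, but routine, technical point.
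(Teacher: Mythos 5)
The paper does not prove this statement: it is the classical Lang--Néron theorem, quoted with references to [LN59] and [Con06], so there is no internal proof to compare against, and your sketch is precisely the standard argument of those references. You correctly assemble the three classical ingredients --- weak Mordell--Weil via the Kummer sequence and finiteness of $H^1$ of the (tame) fundamental group of an affine curve over $\F$; a degree-type height on sections of a model with the quasi-parallelogram estimates; and the formal descent --- and, most importantly, you correctly isolate the only place where $\Tr_{k/\F}(A)=0$ is used, namely finiteness of points of bounded height via the finite-type parameter space of sections and the Albanese/trace rigidity argument. The one step I would not let stand as written is the preliminary ``reduction to the curve case'': when $\mathrm{trdeg}_{\F}k>1$, the function field of a complete-intersection curve $C$ in a model of $k$ is not an extension of $k$, and injectivity of the specialization map $A(k)\to A_{\eta_C}(\F(C))$ for a \emph{single} such curve over the countable field $\F$ is not automatic (one cannot dodge countably many bad loci here). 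This reduction is, however, unnecessary: your height (intersection of $\sigma_P^*\mathcal{L}$ with $H^{\dim B-1}$ for an ample $H$) and the finite-type scheme $\Sigma_T$ of sections of bounded degree make sense over a normal projective base of any dimension, and the positive-dimensional-family-of-sections argument yields a nonzero map from a constant abelian variety to $A$ just as well there; this is how Conrad's exposition proceeds, and with that adjustment the proof is complete in outline.
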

	By Theorem \ref{Lang-Neron:t}, if we denote by $A^{(n)}$ the Frobenius twist of $A$ by the $p^n$-power Frobenius, we have a tower of finite groups $A(k)_{\textrm{tors}}\subseteq A^{(1)}(k)_{\textrm{tors}}\subseteq A^{(2)}(k)_{\textrm{tors}}\subseteq\dots .$
	In June 2011, in a correspondence with Langer and R\"ossler, Esnault asked whether this chain is \textit{eventually stationary}. Since $$\bigcup_{n\geq 0} A^{(n)}(k)=A(k^{\perf}),$$ where $k^{\perf}$ is a perfect closure of $k$, an equivalent way to formulate the question is to ask whether the group of $k^\perf$-rational torsion points $A(k^{\perf})_{\tors}$ is a finite group. As an application of Theorem \ref{intro:t1}, we give a positive answer to her question.
	\begin{theo}[Theorem \ref{perfect-p-tor:t}]\label{intro-perfect-p-tor:t}
		If $A$ is an abelian variety over $k$ such that $\Tr_{k/\F}(A)=0$, then the group $A(k^{\mathrm{perf}})_{\mathrm{tors}}$ is finite.
	\end{theo}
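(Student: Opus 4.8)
The plan is to reduce the finiteness of $A(k^{\mathrm{perf}})_{\mathrm{tors}}$ to Theorem \ref{intro:t1}, via the study of the $p$-divisible group and its associated $F$-isocrystal. First, since $A(k)_{\mathrm{tors}}$ is finite by Lang--Néron (Theorem \ref{Lang-Neron:t}) and the prime-to-$p$ torsion does not grow under purely inseparable extensions, the only issue is the $p$-power torsion: one must show that $A(k^{\mathrm{perf}})[p^\infty]$ is finite, equivalently that the chain $A(k)[p^\infty]\subseteq A^{(1)}(k)[p^\infty]\subseteq\cdots$ stabilizes. By a standard spreading-out argument, choose a smooth geometrically connected model $X_0/\F_q$ with function field $k$ and an abelian scheme $\mathcal{A}_0\to X_0$ extending $A$; a $k^{\mathrm{perf}}$-rational torsion point of order $p^n$, after twisting, corresponds to a horizontal (Frobenius-compatible) sub-line-object of the convergent $F$-isocrystal attached to $\mathcal{A}_0[p^\infty]$, i.e. to $H^1_{\mathrm{crys}}$ of the special fiber. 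The key point is that the Dieudonné $F$-isocrystal $\mathcal{E}_0 := \mathbf{D}(\mathcal{A}_0[p^\infty])[1/p]$ underlies the overconvergent $F$-isocrystal $\mathcal{E}_0^\dagger = H^1_{\mathrm{rig}}(\mathcal{A}_0/X_0)$, so the two categories $\Foi(X_0)$ and $\Fisoc(X_0)$ are exactly in the position analyzed by Theorem \ref{intro:t1}.

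Next I would make the connection between torsion points and morphisms of isocrystals precise. A nontrivial $p$-power torsion point defined over $k^{\mathrm{perf}}$ but not over $k$ produces, on the special fiber, a nonzero map from a Frobenius sub-isocrystal of minimal slope (the \emph{étale}, i.e. slope-zero, part of $\mathbf{D}(\mathcal{A}_0[p^\infty])[1/p]$ has the relevant minimal slope after dualizing) to the unit object $\calO_{X_0}$ \emph{as a convergent isocrystal} — precisely because perfection kills the obstruction coming from the Frobenius structure, so that over $k^{\mathrm{perf}}$ the section becomes horizontal but need not be Frobenius-equivariant, hence only a morphism of plain isocrystals. Decomposing $\mathcal{E}_0^\dagger$ into irreducible overconvergent constituents $\mathcal{E}_{0,i}^\dagger$ and applying Theorem \ref{intro:t1} to the constituent through which this map factors, one concludes that that constituent has rank $1$. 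Thus the "new" torsion — the part of $A(k^{\mathrm{perf}})_{\mathrm{tors}}$ not already seen over $k$ — is controlled by the rank-one overconvergent sub-$F$-isocrystals of $H^1_{\mathrm{rig}}(\mathcal{A}_0/X_0)$, of which there are only finitely many (a rank-one overconvergent $F$-isocrystal with nonzero slope-zero part forces, via the Frobenius eigenvalue at a closed point, strong rigidity; the hypothesis $\Tr_{k/\F}(A)=0$ rules out the constant ones). Summing over these finitely many constituents and bounding the order of torsion each can carry yields a uniform bound on $\#A(k^{\mathrm{perf}})[p^\infty]$.

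Finally I would assemble the pieces: combine the (uniformly bounded) $p$-primary part with the finite prime-to-$p$ part $A(k)_{\mathrm{tors}}[1/p] = A(k^{\mathrm{perf}})_{\mathrm{tors}}[1/p]$ to deduce $A(k^{\mathrm{perf}})_{\mathrm{tors}}$ is finite. The main obstacle I anticipate is not Theorem \ref{intro:t1} itself — which is granted — but rather the faithful translation between arithmetic data (a $k^{\mathrm{perf}}$-point of the abelian scheme killed by $p^n$) and $p$-adic cohomological data (a morphism of \emph{convergent} isocrystals out of the minimal-slope piece), keeping careful track of which structures (horizontality versus Frobenius-equivariance, convergent versus overconvergent) survive passage to the perfection and to the generic point; one must also verify that the relevant sub-line-object really has \emph{minimal} slope inside its overconvergent constituent, which is where the Dieudonné-theoretic slope bookkeeping and the irreducibility reduction have to be handled with care. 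A secondary technical point is the spreading-out and the independence of the final bound from $n$, for which one invokes the finiteness of the set of overconvergent constituents of a fixed $F$-isocrystal together with the rigidity of rank-one objects.
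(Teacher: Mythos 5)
Your overall strategy coincides with the paper's: spread out to an abelian scheme over a model $X_0$, use crystalline Dieudonn\'e theory to translate $p$-power torsion over $k^{\perf}$ into a morphism out of the unit-root (minimal slope) part of $R^1f_{0,\crys*}\calO_{\frakA_0}$, and invoke Theorem \ref{intro:t1}. But there are two genuine gaps. First, the translation step as you state it does not work: a single torsion point of order $p^n$ gives a map of finite flat group schemes $\Z/p^n\to\frakA[p^n]$, which becomes zero in the isogeny category $\Fisoc(X)$, so it produces no morphism of (convergent) isocrystals at all. What is actually needed is the \emph{infinitude} of $A[p^\infty](k^{\perf})\simeq\frakA[p^\infty]^{\et}(X)$, upgraded by a compactness (inverse limit/Tychonoff) argument to an embedding of the full $p$-divisible group $(\Qp/\Zp)_X\hookrightarrow\frakA[p^\infty]^{\et}$; only this survives applying $\mathbb{D}_{\Qpbar}$ and yields a quotient $(\calF_0)_X\twoheadrightarrow(\calO_X,\id)$. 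This is the content of the paper's Lemma \ref{infinity:l}, and it is the step your proposal is missing. (There is also a descent step from $X$ back to $X_0$, which is where the morphism loses its Frobenius-equivariance and becomes a map of plain convergent isocrystals, as required by Theorem \ref{intro:t1}.)

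Second, your endgame --- ``summing over the finitely many rank-one constituents and bounding the order of torsion each can carry'' --- is not an argument that can be completed: torsion is not apportioned among constituents in a way that admits such a bound, and Theorem \ref{intro:t1} applied constituent-by-constituent does not directly apply because the minimal-slope subobject $\calF$ need not factor through a single irreducible summand (the paper handles this by an induction proving surjectivity of $\Hom(\calE,\mathbbm 1)\to\Hom(\calF,\mathbbm 1)$ for $\calE^\dagger$ semi-simple, Corollary \ref{pure-geom-surj:t}). The correct conclusion is a clean contradiction: once the map $\calF\to\calO_{X_0}$ extends to $\calE\to\calO_{X_0}$, one runs Dieudonn\'e theory backwards (using full faithfulness of $\mathbb{D}$) to obtain an embedding $(\Qp/\Zp)_X\hookrightarrow\frakA[p^\infty]$ over $X$ itself, i.e.\ infinitely many $p$-power torsion points already rational over $k$, contradicting Lang--N\'eron under the hypothesis $\Tr_{k/\F}(A)=0$. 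You should replace the counting argument with this contradiction.
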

	
	\begin{rema}\label{intro-p-tor:r}
		Theorem \ref{intro-perfect-p-tor:t} was already known for elliptic curves, by the work of Levin in \cite{Lev68}, and for ordinary abelian varieties, by \cite[Theorem 1.4]{Ros17}. When $\ell$ is a prime different from $p$, the group $A[\ell^{\infty}]$ is étale, hence $A[\ell^{\infty}](k^{\mathrm{perf}})=A[\ell^{\infty}](k)$. Therefore, in Theorem \ref{intro-perfect-p-tor:t}, the finiteness of torsion points of prime-to-$p$ order is guaranteed by Theorem \ref{Lang-Neron:t}.
	\end{rema}
	In order to deduce Theorem \ref{intro-perfect-p-tor:t} from Theorem \ref{intro:t1} we use the \textit{crystalline Dieudonné theory}, as developed in \cite{BBM82}. The proof of Theorem \ref{intro-perfect-p-tor:t} is by contradiction. If $|A[p^{\infty}](k^{\mathrm{perf}})|=\infty$, then there exists a monomorphism $\Qp/\Zp\hookrightarrow A[p^\infty]^{\textrm{\'et}}$ from the trivial $p$-divisible group $\Qp/\Zp$ over $k$ to the étale part of the $p$-divisible group of $A$. Spreading out to a “nice” model $\mathfrak A/X$ of $A/k$ and applying the contravariant crystalline Dieudonné functor $\mathbb D$, one gets an epimorphism of $F$-isocrystals $\mathbb D(\mathfrak A[p^\infty]^{\textrm{\'et}})\twoheadrightarrow \mathbb D((\Qp/\Zp)_X)\simeq \mathcal O_X$ over $X$. By a descent argument and Theorem \ref{intro:t1}, the quotient extends to a quotient $\mathbb D(\mathfrak A[p^\infty])\twoheadrightarrow\mathcal O_X$ over $X$. Going back to $p$-divisible groups, this gives an injective map $\Qp/\Zp\hookrightarrow A[p^\infty]$ over $k$. Therefore, $A[p^\infty](k)$ would be an infinite group, contradicting Theorem \ref{Lang-Neron:t}.
	\subsection{Monodromy groups}If $X_0$ is geometrically connected over $\Fq$, the categories $\Fisoc(X_0)$ and $\Foi(X_0)$ and their versions without Frobenius structures $\Isoc(X_0)$ and $\oi(X_0)$ are neutral Tannakian categories. The choice of an $\F$-point $x$ of $X_0$ induces fibre functors for all these categories. To prove Theorem \ref{intro:t1}, we study the \textit{monodromy groups} associated to the objects involved. For every $\calE_0^\dagger\in \Foi(X_0)$, we have already seen that we can associate an object $\calE_0:=\epsilon(\calE_0^\dagger)\in\Fisoc(X_0)$. We denote by $\mathcal E^{\dagger}\in \Isoc^{\dagger}(X_0)$ (resp. $\mathcal E\in \Isoc(X_0)$\footnote{We point out that the convention on the subscript $_0$ is not consistent between $F$-isocrystals and varieties, namely $\mathcal E$ denotes an isocrystal over $X_0$ and not over $X$.}) the isocrystal obtained from $\mathcal E_0^{\dagger}$ (resp. $\mathcal E_0$) by forgetting its Frobenius structure. Using the Tannakian formalism, we associate to each of these objects an algebraic group $G(-)$. They all sit naturally in a commutative diagram of closed immersions
	\begin{center}
		\begin{tikzcd}
			G(\calE)\arrow[hook]{r}\arrow[hook,d] & G(\calE_0)\arrow[hook,d]\\
			G(\calE^{\dagger})\arrow[hook]{r} & G(\calE_0^{\dagger}).
		\end{tikzcd}
	\end{center}
	While $G(\calE_0)$ and $G(\calE_0^{\dagger})$ are the analogues of the arithmetic monodromy group of a lisse sheaf, $G(\calE)$ and $G(\calE^{\dagger})$ correspond to the geometric monodromy group.
	If $\mathcal E_0^{\dagger}$ is irreducible and its determinant has finite order, as a consequence of class field theory, the group $G(\mathcal E_0^{\dagger})/G(\mathcal E^{\dagger})$ is finite, \cite[Theorem 3.4.7]{Dad}. We prove that the same is true for $G(\mathcal E_0)/G(\mathcal E)$.

	\begin{prop}[Proposition \ref{constant-objects:p}]
		\label{intro-constant-objects:p}
		Let $\calE^{\dagger}_0$ be an irreducible overconvergent $F$-isocrystal with finite order determinant. The quotient $G(\calE_0)/G(\calE)$ is finite.
	\end{prop}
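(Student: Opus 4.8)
The plan is to compare the two quotients via the Tannakian formalism and to reduce the finiteness of $G(\calE_0)/G(\calE)$ to the given finiteness of $G(\calE_0^{\dagger})/G(\calE^{\dagger})$. We will use two structural facts. First, $G(\calE)\trianglelefteq G(\calE_0)$ and $G(\calE^{\dagger})\trianglelefteq G(\calE_0^{\dagger})$ — the analogue, for (over)convergent $F$-isocrystals, of the normality of geometric monodromy inside arithmetic monodromy — so that $Q_0:=G(\calE_0)/G(\calE)$ and $Q_0^{\dagger}:=G(\calE_0^{\dagger})/G(\calE^{\dagger})$ are affine algebraic groups, the latter finite by hypothesis. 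Second, $G(\calE_0^{\dagger})$ is reductive: since $\calE_0^{\dagger}$ is irreducible its fibre at $x$ is a faithful irreducible representation of $G(\calE_0^{\dagger})$, so the unipotent radical acts trivially and hence vanishes; in particular the Tannakian category $\langle\calE_0^{\dagger}\rangle$ generated by $\calE_0^{\dagger}$ is semisimple.

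By Tannakian duality, $Q_0$ (resp. $Q_0^{\dagger}$) is the Tannaka group of the full subcategory $\calC_0\subseteq\langle\calE_0\rangle$ (resp. $\calC_0^{\dagger}\subseteq\langle\calE_0^{\dagger}\rangle$) of objects whose underlying convergent (resp. overconvergent) isocrystal is trivial, i.e. isomorphic to some $\calO_{X_0}^{\oplus n}$ — these being exactly the objects on which $G(\calE)$ (resp. $G(\calE^{\dagger})$) acts trivially. A constant convergent $F$-isocrystal is automatically overconvergent: its underlying isocrystal is some $\calO_{X_0}^{\oplus n}$, which is overconvergent, and its Frobenius structure is a horizontal automorphism, hence already defined over the overconvergent structure sheaf. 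So $\epsilon$ identifies constant convergent $F$-isocrystals with constant overconvergent $F$-isocrystals. It therefore suffices to prove that $\epsilon$ carries $\calC_0$ into $\langle\calE_0^{\dagger}\rangle$, i.e. restricts to a fully faithful functor $\calC_0\to\calC_0^{\dagger}$: this produces a surjection $Q_0^{\dagger}\twoheadrightarrow Q_0$, whence $Q_0=G(\calE_0)/G(\calE)$ is finite.

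To prove $\epsilon(\calC_0)\subseteq\langle\calE_0^{\dagger}\rangle$ we reduce to rank one. A constant $F$-isocrystal is an iterated extension of rank-one constant ones $(\calO_{X_0},\alpha)$ with $\alpha\in\Qpbar^{\times}$, and $\langle\calE_0^{\dagger}\rangle$ is closed under extensions, so it is enough to show that $(\calO_{X_0},\alpha)\in\langle\calE_0\rangle$ implies $(\calO_{X_0}^{\dagger},\alpha)\in\langle\calE_0^{\dagger}\rangle$. Such an $(\calO_{X_0},\alpha)$ is a composition factor of some tensor construction $T^{m,n}(\calE_0)=\epsilon(T^{m,n}(\calE_0^{\dagger}))$; since $G(\calE_0^{\dagger})$ is reductive, $T^{m,n}(\calE_0^{\dagger})$ is a direct sum of simple objects of $\langle\calE_0^{\dagger}\rangle$, so $(\calO_{X_0},\alpha)$ is a composition factor of $\epsilon(S^{\dagger})$ for one such simple $S^{\dagger}$. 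If $(\calO_{X_0},\alpha)$ were a sub or a quotient of $\epsilon(S^{\dagger})$, then by full faithfulness of $\epsilon$ the resulting nonzero morphism between $(\calO_{X_0}^{\dagger},\alpha)$ and the simple $S^{\dagger}$ would force $S^{\dagger}\cong(\calO_{X_0}^{\dagger},\alpha)$, which lies in $\langle\calE_0^{\dagger}\rangle$, and we would be done. The main obstacle is precisely to upgrade ``composition factor'' to ``sub or quotient'': after possibly shrinking $X_0$ so that the slope filtration of $\epsilon(S^{\dagger})$ exists, $(\calO_{X_0},\alpha)$ lies in a single isoclinic graded piece of that slope filtration, and one must argue that a rank-one constant composition factor of such a graded piece splits off as a sub- or quotient-object. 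I expect this to use once more the full faithfulness of $\epsilon$ together with a structural property of the slope filtration of $\epsilon(S^{\dagger})$ — such as semisimplicity of its graded pieces — and, after twisting by an auxiliary rank-one object to reduce to the unit-root situation (where $\calE_0$ corresponds to a $p$-adic representation of $\pi_1^{\et}(X_0)$), the class field theory input already used for the overconvergent case in \cite[Corollary 3.5.5]{Dad}.
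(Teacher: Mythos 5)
Your reduction is sound up to a point: identifying $G(\calE_0)/G(\calE)$ with the Tannaka group of the constant subcategory $\langle\calE_0\rangle_{\cst}$, observing that constant convergent $F$-isocrystals are automatically overconvergent, and noting that it would suffice to place their overconvergent avatars inside $\langle\calE_0^\dagger\rangle$ — all of this is correct. But that last inclusion is precisely the obstruction the paper identifies and deliberately does \emph{not} attack: in the remark following Proposition \ref{conv-oconv-mon:p} the authors state that every $\calF_0\in\langle\calE_0\rangle_{\cst}$ comes from some $\calF_0^\dagger\in\Foi(X_0)$, but that they do not know whether $\calF_0^\dagger$ lies in $\langle\calE_0^\dagger\rangle$, and they call this ``the main issue''. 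Your route to it founders exactly where you say it does: since the essential image of $\epsilon$ is not closed under subquotients, $\epsilon(S^\dagger)$ need not be semisimple, and a constant composition factor sitting in a middle graded piece of the slope filtration is in general neither a sub nor a quotient of $\epsilon(S^\dagger)$. Upgrading ``composition factor'' to ``sub or quotient'' — i.e.\ producing an actual nonzero morphism between $\epsilon(S^\dagger)$ and $(\calO_{X_0},\alpha)$ — is, in substance, the conjecture of Kedlaya of which the paper proves only a special case; and that special case (Theorem \ref{key:t}) is itself proved \emph{using} Proposition \ref{constant-objects:p}, so your plan is circular. The hoped-for ``semisimplicity of the graded pieces of the slope filtration'' is not available, and even granted it would only make the constant factor a sub/quotient of a graded piece, not of $\epsilon(S^\dagger)$.

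The paper's proof takes a different and non-categorical route: it never shows that the map between $G(\calE_0)^{\cst}$ and $G(\calE_0^\dagger)^{\cst}$ is an isomorphism (note also that the natural map, from Proposition \ref{conv-oconv-mon:p}, is a surjection $G(\calE_0)^{\cst}\twoheadrightarrow G(\calE_0^\dagger)^{\cst}$ — the direction opposite to the one you need). Instead, Frobenius tori (Theorem \ref{frob-tori:t}) show that $G(\calE_0)\subseteq G(\calE_0^\dagger)$ has maximal rank, whence $G(\calE_0)^{\cst}$ is of multiplicative type (Corollary \ref{mult-type:c}); the auxiliary-object construction in Theorem \ref{rank:t} gives that $G(\calE_0)^{\cst}$ and $G(\calE_0^\dagger)^{\cst}$ have the same dimension (Corollary \ref{red-rank-cst:c}); and since $G(\calE_0^\dagger)^{\cst}$ is finite by class field theory, $G(\calE_0)^{\cst}$ is a zero-dimensional group of multiplicative type, hence finite. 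Any repair of your argument would have to import this maximal-rank input, which your proposal nowhere uses.
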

	To prove Proposition \ref{intro-constant-objects:p}, we have to show that $G(\calE)$ is “big”. We study $G(\calE)$ as a subgroup of $G(\calE^\dagger)$ and we prove our fundamental result.
	\begin{theo}[Theorem \ref{rank:t}]\label{intro:t2}
		If $\calE^{\dagger}_0$ is an overconvergent $F$-isocrystal, then $G(\calE)$ contains a maximal torus of $G(\mathcal E^{\dagger})$.
	\end{theo}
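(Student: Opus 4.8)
The plan is to exhibit the required maximal torus as a \emph{Frobenius torus}. Fix a closed point $x\in|X_0|$. Restriction along $x$ carries $\calE_0^{\dagger}$ — equivalently $\calE_0$, since $\epsilon$ is an equivalence over the proper scheme $x$ — to an $F$-isocrystal on $\Spec k(x)$, whose monodromy group is the Zariski closure of the cyclic group generated by the geometric Frobenius $\varphi_x$; pushing this into $G(\calE_0^{\dagger})$ one gets a well-defined semisimple conjugacy class, and I let $T_x\subseteq G(\calE_0^{\dagger})$ be the identity component of the Zariski closure of the cyclic subgroup generated by (the semisimple part of) $\varphi_x$. Then $T_x$ is a subtorus of $G(\calE_0^{\dagger})$, and as the whole construction already takes place inside the monodromy group of the \emph{convergent} $F$-isocrystal $\calE_0$, we have $T_x\subseteq G(\calE_0)$.

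Next I would run the standard reductions: passing to the semisimplification changes each of the four monodromy groups only by a unipotent normal subgroup, hence preserves ranks and lets one lift maximal tori; and after decomposing into isotypic parts and twisting by a suitable rank-one $F$-isocrystal (a twist affecting $G(\calE)$ and $G(\calE^{\dagger})$ compatibly through a central character) one reduces to the case in which $\calE_0^{\dagger}$ is irreducible with determinant of finite order. In that case $G(\calE_0^{\dagger})/G(\calE^{\dagger})$ is finite by class field theory, so the \emph{connected} group $T_x$ maps to the identity in this finite quotient, i.e. $T_x\subseteq G(\calE^{\dagger})$. Thus $T_x\subseteq G(\calE_0)\cap G(\calE^{\dagger})$; and since the commutative square of monodromy groups is cartesian — $G(\calE)=G(\calE_0)\cap G(\calE^{\dagger})$ as closed subgroups of $G(\calE_0^{\dagger})$, which one reads off from the Tannakian formalism using that $\langle\calE\rangle$ is generated by the images of $\langle\calE_0\rangle$ and of $\langle\calE^{\dagger}\rangle$ — it follows that $T_x\subseteq G(\calE)$.

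It remains to produce a closed point $x$ for which $\dim T_x$ attains its maximal possible value $\mathrm{rank}\,G(\calE_0^{\dagger})$. Granting this, since $\mathrm{rank}\,G(\calE^{\dagger})=\mathrm{rank}\,G(\calE_0^{\dagger})$ (the quotient being finite), the torus $T_x$ has dimension $\mathrm{rank}\,G(\calE^{\dagger})$ and is therefore a maximal torus of $G(\calE^{\dagger})$ which, by the previous step, is contained in $G(\calE)$; this is exactly the assertion. The existence of a Frobenius torus of maximal dimension is the $p$-adic analogue of a theorem of Serre and is the heart of the matter. I would prove it from two ingredients. Group-theoretically: for a reductive group $G$ the locus of semisimple $g$ with $\dim(\overline{\langle g\rangle})^{\circ}=\mathrm{rank}\,G$ is the complement of a countable union of proper Zariski-closed subsets. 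Arithmetically: the Frobenius conjugacy classes $\{\varphi_x\}_{x\in|X_0|}$ are equidistributed in $G(\calE_0^{\dagger})$ finely enough that a positive density of them avoids all of these closed subsets at once — which uses that Frobenius characteristic polynomials have coefficients that are Weil numbers (Weil~II for overconvergent $F$-isocrystals) together with a Chebotarev-type density statement. Alternatively, one may transport the whole problem to an $\ell$-adic companion of $\calE_0^{\dagger}$, whose Frobenius data agree and whose monodromy group is, up to forms, the same, and invoke Serre's classical result there.

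The step I expect to be the main obstacle is precisely this last one: ruling out that infinitely many multiplicative relations among Frobenius eigenvalues conspire to keep $\dim T_x$ below $\mathrm{rank}\,G(\calE_0^{\dagger})$ for every closed point $x$ — equivalently, making the equidistribution/Chebotarev input (or the comparison of monodromy groups under $\ell$-adic companions) sufficiently strong in the $p$-adic setting. By contrast the preliminary reductions and the cartesian-square manipulation are essentially formal.
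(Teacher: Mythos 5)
Your construction of the Frobenius torus $T_x$, the inclusions $T_x\subseteq G(\calE_0)$ (via $G(i_0^*\calE_0)\simeq G(i_0^*\calE_0^\dagger)$ over a closed point) and $T_x\subseteq G(\calE^{\dagger})$ (via finiteness of $G(\calE_0^{\dagger})/G(\calE^{\dagger})$ after twisting), and the appeal to the existence of a Frobenius torus of maximal dimension all match the paper (Theorem \ref{frob-tori:t} and Corollary \ref{arit-rank:c}). The gap is precisely the step you describe as ``essentially formal'': the claim that the square of monodromy groups is cartesian, i.e.\ $G(\calE)=G(\calE_0)\cap G(\calE^{\dagger})$ inside $G(\calE_0^{\dagger})$. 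By Proposition \ref{conv-oconv-mon:p} one has $G(\calE^{\dagger})=\Ker\bigl(G(\calE_0^{\dagger})\to G(\calE_0^{\dagger})^{\cst}\bigr)$ and $G(\calE)=\Ker\bigl(G(\calE_0)\to G(\calE_0)^{\cst}\bigr)$, so that $\bigl(G(\calE_0)\cap G(\calE^{\dagger})\bigr)/G(\calE)\simeq\Ker(\varphi)$, where $\varphi:G(\calE_0)^{\cst}\twoheadrightarrow G(\calE_0^{\dagger})^{\cst}$ is the right-hand vertical arrow. Your cartesian-square claim is therefore exactly the injectivity of $\varphi$, which the paper explicitly states it does not know how to prove in general and identifies as \emph{the} main issue in the proof of Theorem \ref{rank:t}. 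Your Tannakian justification does not repair this: dually, injectivity of $\varphi$ amounts to essential surjectivity of $\langle\calE_0^{\dagger}\rangle_{\cst}\hookrightarrow\langle\calE_0\rangle_{\cst}$; a constant object of $\langle\calE_0\rangle$ does lift to an overconvergent constant $F$-isocrystal, but since the essential image of $\epsilon$ is not closed under subquotients there is no reason for that lift to lie in $\langle\calE_0^{\dagger}\rangle$. As $\Ker(\varphi)$ is a priori a positive-dimensional group of multiplicative type, $T_x$ could surject onto it and fail to be contained in $G(\calE)$, so the gap is not cosmetic. (You have also misplaced the main obstacle: the existence of maximal Frobenius tori is quoted from \cite{Dad} as a known theorem, whereas the arithmetic-to-geometric descent is the new content.)

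The paper circumvents exactly this point with an auxiliary object. Choose generators $\chi_{1,0},\dots,\chi_{n,0}$ of $X^*(G(\calE_0)^{\cst})$ and lift each to a character $\chi_{i,0}^{\dagger}$ of $\pi_1^{\Foi}(X_0)$ (possible because every constant $F$-isocrystal is overconvergent, even if not visibly inside $\langle\calE_0^{\dagger}\rangle$); then replace $V_0^{\dagger}$ by $\widetilde{V}_0^{\dagger}=V_0^{\dagger}\oplus\bigoplus_{i}\chi_{i,0}^{\dagger}$. This leaves $G(\calE^{\dagger})$ and $G(\calE)$ unchanged, because the added summands become trivial after forgetting the Frobenius structure, but it forces the analogue of $\varphi$ for $\widetilde{V}_0^{\dagger}$ to be an isomorphism by construction (every generator of $X^*(G(\widetilde{V}_0)^{\cst})$ is now hit). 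The rank equality $\rk(G(\calE))=\rk(G(\calE^{\dagger}))$ then follows from Corollary \ref{arit-rank:c} applied to $\widetilde{V}_0^{\dagger}$ together with additivity of the reductive rank along the two exact sequences of Proposition \ref{conv-oconv-mon:p}. If you insert this auxiliary-object step in place of the cartesian-square claim (and run the rank count for the semi-simple object directly, rather than reducing to irreducible summands, since $G(\calE^{\dagger})$ of a direct sum is not determined by the groups of the summands), your argument becomes the paper's proof.
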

	To prove Theorem \ref{intro:t2}, we use the existence of \textit{Frobenius tori} which are maximal tori of $G(\mathcal E_0^{\dagger})$ (Theorem \ref{frob-tori:t}). First we reduce to the case when $\calE^{\dagger}_0$ is semi-simple and \textit{algebraic} (cf. Definition \ref{alg:d}). By Theorem \ref{frob-tori:t}, there exists a closed point $i_0:x_0\hookrightarrow X_0$ such that the subgroup $G(i_0^*\mathcal E^{\dagger}_0)\subseteq G(\mathcal E^{\dagger}_0)$ contains a maximal torus of $G(\mathcal E^{\dagger}_0)$. Since over a closed point every $F$-isocrystal admits an overconvergent extension, one has $G(i_0^*\mathcal E^{\dagger}_0)=G(i_0^*\mathcal E_0)$. Hence, $G(\mathcal E_0)$ contains a maximal torus of $G(\mathcal E^{\dagger}_0)$. To pass from $G(\mathcal E_0)\subseteq G(\mathcal E^{\dagger}_0)$ to $G(\mathcal E)\subseteq G(\mathcal E^{\dagger})$, we will apply Theorem \ref{frob-tori:t} to an auxiliary overconvergent $F$-isocrystal $\widetilde{\mathcal E}^{\dagger}_0$ over $X_0$, such that $G(\widetilde{\mathcal E}^{\dagger})=G(\mathcal E^{\dagger})$, $G(\widetilde{\mathcal E})=G(\mathcal E)$ and with the additional property that the natural map $G(\widetilde{\mathcal E}_0)/G(\widetilde{\mathcal E})\rightarrow G(\widetilde{\mathcal E}_0^{\dagger})/G(\widetilde{\mathcal E}^{\dagger})$ is an isomorphism. 
	
	\begin{rema}
		In \cite[page 460]{CrewMon} Crew asks whether, under the assumptions of Theorem \ref{intro:t2}, the group $G(\mathcal E)$ is a parabolic subgroup of $G(\mathcal E^{\dagger})$. In two subsequent articles \cite{Crewab} and \cite{Crewsh}, he gives a positive answer to his question in some particular cases. Since parabolic subgroups of reductive groups always contain a maximal torus, Theorem \ref{intro:t2} is evidence for Crew's expectation.
	\end{rema}
	To deduce Theorem \ref{intro:t1} from Proposition \ref{intro-constant-objects:p}, we first reduce ourself to the situation where the determinant of $\mathcal E_0^{\dagger}$ has finite order. To simplify, let us assume that $\mathcal E_0$ has constant Newton polygon, that $\mathcal F_0$ is equal to $\mathcal E^{1}_0$, the subobject of minimal slope, and that $G(\mathcal E_0)$ is connected. Proposition \ref{intro-constant-objects:p} implies that $G(\mathcal E)=G(\mathcal E_0)$ hence that the morphism $\calE^1_0\to\calO_{X_0}$ commutes with the trivial Frobenius structure on $\calO_{X_0}$. In particular, $\calE^1_0$ has slope $0$, so that the minimal slope of $\calE_0$ is $0$. Since the determinant of $\mathcal E_0$ has finite order, this implies that $\mathcal E_0^1=\mathcal E_0$ hence that $\mathcal E_0$ admits a quotient $\mathcal E_0\twoheadrightarrow\calO_{X_0}$ in $\Fisoc(X_0)$. As $\epsilon:\Foi(X_0)\rightarrow \Fisoc(X_0)$ is fully faithful, $\calE_0^\dagger$ admits a quotient $\mathcal E_0^\dagger\twoheadrightarrow\calO_{X_0}^\dagger$ in $\Foi(X_0)$. On the other hand, $\mathcal E^{\dagger}_0$ is irreducible, so that the quotient gives actually an isomorphism $\mathcal E^{\dagger}_0\simeq\calO_{X_0}^\dagger$.
	
	\subsection{Weak (weak) semi-simplicity}\label{weak:ss} As an additional outcome of Theorem \ref{intro:t2}, we get a semi-simplicity result for extensions of \textit{constant} convergent $F$-isocrystals (cf. Definition \ref{cst:d}). Recall that an overconvergent $F$-isocrystal $\calE_0^\dagger$ is said \textit{pure of weight $n$}, where $n$ is an integer, if for every closed point of $X_0$ of degree $d$ over $\Fq$ the eigenvalues of the Frobenii at closed points (cf. \cite[Definition 9.5]{Ked16}) have complex absolute value $q^{dn/2}$ for every isomorphism $\overline \Q_p\simeq \C$. Let $\Fisoc_{\pure^{\dagger}}(X_0)$ denote the Tannakian subcategory of $\Fisoc(X_0)$ generated by the essential image via $\epsilon:\Foi(X_0)\rightarrow \Fisoc(X_0)$ of pure objects in $\Foi(X_0)$. Thanks to a group-theoretic argument (Lemma \ref{coh-van:l}), Theorem \ref{intro:t2} implies the following.
	\begin{coro}[Corollary \ref{weakweak:c}]\label{intro-weakweak:c}
		A convergent $F$-isocrystal in $\Fisoc_{\pure^{\dagger}}(X_0)$ which is an extension of constant $F$-isocrystals is constant.
	\end{coro}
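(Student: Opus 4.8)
The plan is to reformulate the statement in terms of monodromy groups and then to combine Theorem \ref{rank:t} with the group-theoretic Lemma \ref{coh-van:l}. Let $\calE_0\in\Fisoc_{\pure^{\dagger}}(X_0)$ be an (iterated) extension of constant $F$-isocrystals and denote by $\calE\in\Isoc(X_0)$ the isocrystal obtained by forgetting its Frobenius structure. Since forgetting the Frobenius structure is exact and a constant $F$-isocrystal has underlying isocrystal $\calO_{X_0}^{\oplus m}$, the object $\calE$ is an iterated extension of copies of $\calO_{X_0}$; hence every object of $\langle\calE\rangle$ is an iterated extension of the unit object, and so $G(\calE)$ is a unipotent group. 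The claim is therefore equivalent to $G(\calE)=1$, that is, to $\calE\simeq\calO_{X_0}^{\oplus n}$.

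By the definition of $\Fisoc_{\pure^{\dagger}}(X_0)$ one can find an overconvergent $F$-isocrystal $\calP_0^\dagger$, which may be taken to be a direct sum of pure overconvergent $F$-isocrystals, such that $\calE_0$ is isomorphic to a subquotient of $\epsilon(\calP_0^\dagger)$ in $\Fisoc(X_0)$; write $\calP^\dagger$ and $\calP$ for its underlying overconvergent, resp.\ convergent, isocrystal, with notation as in the diagram above. First I would record that $\calP^\dagger$ is semisimple, so that $G(\calP^\dagger)$ is reductive: this follows because the underlying overconvergent isocrystal of a pure overconvergent $F$-isocrystal is semisimple (the $p$-adic analogue of Deligne's theorem that pure lisse sheaves are geometrically semisimple). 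Then, applying Theorem \ref{rank:t} to $\calP_0^\dagger$, the group $G(\calP)$ contains a maximal torus of $G(\calP^\dagger)$; being a closed subgroup of the reductive group $G(\calP^\dagger)$ that contains one of its maximal tori, $G(\calP)$ satisfies the hypotheses of Lemma \ref{coh-van:l}, which yields $\Hom(G(\calP),\Ga)=0$.

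To conclude, forgetting the Frobenius structure makes $\calE$ a subquotient of $\calP$ in $\Isoc(X_0)$, so $\langle\calE\rangle\subseteq\langle\calP\rangle$ and there is a surjection $G(\calP)\twoheadrightarrow G(\calE)$. Any homomorphism $G(\calE)\to\Ga$ pulls back along this surjection to a homomorphism $G(\calP)\to\Ga$, which is trivial; hence $\Hom(G(\calE),\Ga)=0$. As $G(\calE)$ is unipotent, its abelianisation is a vector group, and the vanishing of $\Hom(G(\calE),\Ga)$ forces $G(\calE)^{\mathrm{ab}}=1$. A unipotent group equal to its own commutator subgroup is trivial, so $G(\calE)=1$, that is, $\calE_0$ is constant.

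The step I expect to be the crux is the choice of $\calP_0^\dagger$ together with the reductivity of $G(\calP^\dagger)$: this is exactly where one needs the semisimplicity results for pure overconvergent $F$-isocrystals, and without reductivity of the ambient group Lemma \ref{coh-van:l} gives nothing, since then $G(\calP)$, and hence $G(\calE)$, could have non-trivial unipotent quotients. Everything else is routine bookkeeping with Tannakian categories and linear algebraic groups.
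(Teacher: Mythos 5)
Your proposal is correct and follows essentially the same route as the paper: reduce to the ambient object $\calP_0^\dagger$ (a sum of pure objects, so $\calP^\dagger$ is semi-simple and $G(\calP^\dagger)$ reductive), invoke Theorem \ref{rank:t} to get that $G(\calP)$ has maximal rank, and apply Lemma \ref{coh-van:l} to kill all unipotent quotients, hence the extension class. The paper phrases the last step as the vanishing of $\Ext^1_{G(\calE)}(\Qpbar,\Qpbar)$ rather than via the unipotence of $G(\calE)$ and its abelianisation, but this is only a cosmetic difference.
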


For every smooth and proper morphism $f_0:Y_0\to X_0$ and every $i\in \N$, the subquotients of the higher direct image $R^if_{0,\crys*}\calO_{Y_0}$ are in $\Fisoc_{\pured}(X_0)$ by \cite{KM74} and \cite{Shiho} (see \cite[Fact 3.1.1.2 and Fact 3.2.1.1]{mioneron}). Therefore, Corollary \ref{intro-weakweak:c} applies to these convergent $F$-isocrystals. In this text we will say that these convergent $F$-isocrystals \textit{come from geometry}.

		Using Artin--Schreier--Witt theory, one can construct on $\mathbb{A}^1_{\Fq}$ non-constant extensions of constant unit-root convergent $F$-isocrystals. One can further construct these extensions in such a way that the resulting convergent $F$-isocrystal has \textit{log-decay}, in the sense of \cite{Joe}. Corollary \ref{intro-weakweak:c} shows, for example, that these $F$-isocrystals are outside $\Fisoc_{\pured}(\mathbb{A}^1_{\Fq})$.

	\begin{rema}
		Let $\calE_0$ be a convergent $F$-isocrystal with constant Newton polygons. Corollary \ref{intro-weakweak:c} implies that $G(\mathcal E)$ has no unipotent quotients. Let $\calE^1$ be the convergent isocrystal which underlies the subobject of $\calE_0$ of minimal slope. Since $G(\mathcal E^1)$ is a quotient of $G(\mathcal E)$, it does not have unipotent quotients as well. In \cite[Conjecture 7.4 and Remark 7.4.1]{Chai}, Chai conjectured that if $\mathcal E_0^{\dagger}$ is the higher direct image of a family of ordinary abelian varieties, then $G(\mathcal E^{1})$ is reductive. Corollary \ref{intro-weakweak:c} may be thought as a first step towards his conjecture.
	\end{rema}
	\subsection{Organization of the paper}
	In §\ref{monodromy:s} we recall the definition of the monodromy groups of the various categories of isocrystals and we prove Theorem \ref{intro:t2}. In §\ref{KC:s} we prove Theorem \ref{intro:t1} and some of its consequences. Finally, in §\ref{perfect-p-tor:s} we prove Theorem \ref{intro-perfect-p-tor:t}.
	\subsection{Acknowledgements} We learned about the problem of perfect torsion points on abelian varieties reading a question of Damian Rössler on the website Mathoverflow \cite{Ros11}. We would like to thank him and Hélène Esnault for their interest and comments on our result. We are grateful to Brian Conrad and Michel Brion for some enlightening discussions about epimorphic subgroups and maximal rank subgroups of reductive groups. We also thank Brian Conrad for the references \cite{Borel1} and \cite{Brion}. We thank Simon Pepin Lehalleur for pointing out a simpler proof of Lemma \ref{coh-van:l} and Raju Krishnamoorthy for some discussions on the crystalline Dieudonné module functor. We thank Anna Cadoret and Hélène Esnault for suggesting several expository improvements. Finally, we thank the anonymous referees for many thoughtful comments.
	\subsection{Notation}\label{notation:s}
	\subsubsection{}\label{tannakian:ss} Let $\KK$ be a characteristic $0$ field and $\bfC$ a $\KK$-linear Tannakian category. A \textit{Tannakian subcategory} of $\bfC$ is a strictly full subcategory of $\bfC$ closed under direct sums, tensor products, duals and subobjects. For $\calE\in\bfC$, we denote by $\langle \calE \rangle$ the smallest Tannakian subcategory of $\bfC$ containing $\calE$. Let $\omega:\bfC\rightarrow \KK$ be a fibre functor. For every $\calE\in\bfC$, the restriction of $\omega$ to $\langle \calE \rangle$ defines a fibre functor of $\langle \calE \rangle$. We denote by $G(\calE)$ the Tannakian group of $\langle \calE \rangle$ with respect to this fibre functor. In general, the fibre functor will be clear from the context, so that we do not keep $\omega$ in the notation. The group $G(\calE)$ is called the \textit{monodromy group} of $\calE$. If $G(\calE)$ is finite we say that $\calE$ is \textit{finite}.
	\subsubsection{}
	Let $G$ be an algebraic group over $\KK$. We denote by $G^{\circ}$ the connected component of the identity of $G$ and by $\rk(G)$ the \textit{reductive rank} of $G$, namely the dimension of the maximal tori of $G$. We say that a subgroup $H$ of $G$ is of \textit{maximal rank} if $\rk(H)=\rk(G)$. Besides, we write $X^*(G)$ for the group of characters of $G$. When $\KK$ is a characteristic $0$ field and $f:G\to H$ is a morphism of affine group schemes over $\KK$,  we say that $f$ is \textit{injective} if it is a closed immersion and we say that $f$ is \textit{surjective} if it is faithfully flat. Since over a characteristic $0$ field every affine group scheme is reduced, this should not generate any confusions.
	\section{Monodromy of convergent isocrystals}\label{monodromy:s}
	
	\subsection{Review of isocrystals}
	\label{review-on-iso:ss}We recall in this section some basic facts about isocrystals. See \cite[§2]{Ked16} for more details.
	Throughout §\ref{review-on-iso:ss}, let $\kappa$ be a subfield of $\F$. We denote by $W(\kappa)$ the ring of Witt vectors of $\kappa$ and by $K(\kappa)$ its field of fractions. We write $\Qpbar$ for a fixed algebraic closure of $\Qp$, and we suppose chosen an embedding of $W(\F)$ in $\Qpbar$. Let $Y$ be a smooth variety over $\kappa$.
	\begin{defi}\label{isoc:d}We write $\Isoc(Y/K(\kappa))$ for the category of \textit{convergent isocrystals} over $Y$ with respect to $K(\kappa)$. For every finite field extension $K(\kappa)\subseteq L$, we have a category of convergent isocrystals over $Y$ endowed with an $L$-structure, \cite[§1.4.1]{Abe}, denoted by $\Isoc(Y/K(\kappa))_L$. The category $\Isoc(Y)$ of \textit{$\Qpbar$-linear convergent isocrystals} is defined to be the 2-inductive limit of the categories $\Isoc(Y/K(\kappa))_L$ where $L$ varies among the finite extensions of $K(\kappa)$ in $\Qpbar$. We write $\calO_{Y}$ for the convergent isocrystal associated to the crystalline structure sheaf. We will also consider the category of \textit{$\Qpbar$-linear convergent $F$-isocrystals}, denoted by $\Fisoc(Y)$. This category consists of pairs $(\calE,\Phi)$, where $\calE$ is a $\Qpbar$-linear convergent isocrystal and $\Phi$ is a \textit{Frobenius structure} on $\calE$, namely a $\Qpbar$-linear isomorphism $F^*\calE\iso \calE$ where $F$ is the absolute Frobenius.
	\end{defi}
	The category of $\Qpbar$-linear convergent $F$-isocrystals has also a different incarnation.
	\begin{defi}
	Let $\Crys(Y/W(\kappa))$ be the category of crystals of finite $\calO_{Y,\crys}$-modules and $\Crys(Y/K(\kappa))$ its isogeny category. As above, one can extend the field of scalars of $\Crys(Y/K(\kappa))$ to $\Qpbar$ obtaining the category $\Crys(Y/K(\kappa))_{\Qpbar}$. We write $\Fcrys(Y/K(\kappa))_{\Qpbar}$ for the category of objects in $\Crys(Y/K(\kappa))_{\Qpbar}$ endowed with a Frobenius structure.
	\end{defi}

	\begin{theo}[Ogus, Berthelot]\label{Ogus:t} There exists a canonical equivalence of categories $$\Fcrys(Y/K(\kappa))_{\Qpbar}\iso \Fisoc(Y).$$
	\end{theo}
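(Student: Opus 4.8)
The plan is to deduce the statement from its version without Frobenius structures --- the canonical equivalence $\Crys(X/W(k))_{\Qpbar}\iso \Isoc(X)$, which is Ogus's theorem, building on Berthelot --- together with the observation that a Frobenius structure is extra datum transported by any equivalence which is compatible with pullback. Concretely, both $\Crys(-/W(k))_{\Qpbar}$ and $\Isoc(-)$ are functorial in the base, and the equivalence one constructs is compatible with arbitrary morphisms, in particular with the absolute Frobenius $F\colon X\to X$; hence an isomorphism $F^*\calM\iso\calM$ corresponds to an isomorphism $F^*\calE\iso\calE$, and the equivalence upgrades verbatim to the categories of pairs. So it suffices to treat the case without $F$.

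For that, I would first reduce to a local situation. Both categories are local for the Zariski topology on $X$ (crystals satisfy descent, and convergent isocrystals are defined so as to glue), so, using that a smooth $k$-scheme is Zariski-locally liftable, we may assume $X$ is affine and admits a smooth $p$-adic formal lift $\mathfrak X$ over $W(k)$. On such an open the classical dictionary of Berthelot applies: evaluating a crystal on the PD-envelope of the diagonal $\mathfrak X\hookrightarrow \mathfrak X\times_{W(k)}\mathfrak X$ produces a coherent $\calO_{\mathfrak X}$-module with a PD-stratification, hence --- since we are in characteristic zero after $\otimes\Qpbar$ --- with an integrable connection; conversely a coherent module with integrable connection determines such a crystal up to isogeny. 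On the other side, a convergent isocrystal on $X$ is precisely a coherent module with integrable connection on the rigid generic fibre $\mathfrak X_K$ whose Taylor series $\sum_{\underline n}\nabla_{\underline n}(m)\,\underline\xi^{\,\underline n}$ converges on the closed unit polydisc.

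The heart of the matter, and the step I expect to be the main obstacle, is that this convergence condition is automatic for the connection attached to a \emph{finite} crystal once $p$ is inverted. This is exactly the content of Ogus's theorem, and it rests on quantitative estimates for the divided powers appearing in the PD-envelope: the denominators $\underline n!$ in the Taylor expansion are absorbed by the divided power structure up to a bounded loss of $p$-adic precision, which becomes invisible after $\otimes\Qp$, so a finitely generated crystal yields a module whose connection has radius of convergence at least $1$. Granting this, one obtains locally a fully faithful functor $\Crys(X/W(k))_{\Qpbar}\to\Isoc(X)$; essential surjectivity is then the coherence statement recalled above, that the coherent module underlying a convergent isocrystal arises from an integral crystal up to isogeny (choose a coherent lattice and check quasi-nilpotence up to isogeny). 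Finally, these local equivalences are canonical --- independent of the chosen lift, since any two lifts agree crystalline-locally --- and compatible on overlaps, so they glue to a canonical equivalence over all of $X$; combined with the first paragraph this gives $\Fcrys(X/W(k))_{\Qpbar}\iso\Fisoc(X)$. I would structure the write-up as: (i) the two local descriptions via modules with connection, (ii) the convergence estimate (citing Berthelot--Ogus and Ogus for the precise bound), (iii) gluing and descent, (iv) adding the Frobenius structure.
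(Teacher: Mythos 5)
The paper does not actually reprove this statement: its ``proof'' is a citation of \cite[Theorem 2.2]{Ked16}, which records the theorem of Berthelot and Ogus, followed by an extension of scalars from $K$ to $\Qpbar$. Your proposal instead attempts the argument from scratch, and it contains a genuine error at exactly the step you flag as the heart of the matter.

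The flaw is the opening reduction ``it suffices to treat the case without $F$,'' together with the claim that convergence on the closed unit polydisc is automatic for the connection attached to a finite crystal once $p$ is inverted. Neither holds. The $F$-free statement $\Crys(X/W(k))_{\Qpbar}\iso\Isoc(X)$ is false: a crystal yields a module with integrable, topologically quasi-nilpotent connection, and quasi-nilpotence only gives convergence of the Taylor series on the polydisc of radius $p^{-1/(p-1)}<1$. The denominators are not absorbed ``up to a bounded loss of $p$-adic precision'': $|\underline{n}!|_p$ decays like $p^{-|\underline{n}|/(p-1)}$, an unbounded loss that tensoring with $\Qp$ does not cure. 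The Frobenius structure is therefore not a decoration transported at the end; it is the engine of the theorem. The actual argument (Dwork's trick, as in Ogus and Berthelot) uses the isomorphism $\calE\simeq (F^n)^*\calE$ together with the fact that pullback along a Frobenius lift raises radii of convergence to the power $1/p$, so that the radius of convergence of an $F$-crystal is at least $p^{-1/(p^n(p-1))}$ for every $n$, hence equal to $1$; the Frobenius structure is likewise used in the converse direction to produce a quasi-nilpotent lattice. Your local descriptions, the gluing, and the independence of the chosen lift are fine, but the write-up must be restructured so that the Frobenius structure enters \emph{before} the convergence estimate, not after it.
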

	\begin{proof}
	By \cite[Th\'{e}or\`{e}me 2.4.2]{Ber96}, there exists an equivalence of categories
		$$\Fcrys(Y/K(\kappa))\simeq \Fisoc(Y/K(\kappa)).$$
		This implies that for every finite field extension $K(\kappa)\subseteq L$ there also exists an equivalence $$\Fcrys(Y/K(\kappa))_{L}\simeq \Fisoc(Y/K(\kappa))_{L}.$$
	Taking the 2-inductive limit over all the finite field extensions $K(\kappa)\subseteq L$ we conclude the proof.
	\end{proof}
	\begin{rema}
		In light of Theorem \ref{Ogus:t}, we will feel free to refer to convergent $F$-isocrystals simply as $F$-isocrystals. Besides, in what follows we will mostly work with $\Qpbar$ coefficients. Therefore, when we will talk about ($F$-)isocrystals we will actually mean $\Qpbar$-linear convergent ($F$-)isocrystals except when it is explicitly said differently.
	\end{rema}
	\begin{defi}\label{oi:d}
		Let $\oi(Y)$ be the category of \textit{($\Qpbar$-linear) overconvergent isocrystals} and $\Foi(Y)$ the category of \textit{($\Qpbar$-linear) overconvergent $F$-isocrystals}. The overconvergent isocrystal associated to the structural sheaf will be denoted by $\calO_Y^\dagger$.
	\end{defi}
The categories of convergent and overconvergent $F$-isocrystals are related by a natural functor $\epsilon: \Foi(Y)\rightarrow \Fisoc(Y)$.
	\begin{theo}[\cite{Ked04}]\label{kedlaya:t}
		The functor $\epsilon:\Foi(Y)\rightarrow \Fisoc(Y)$ is fully faithful.
	\end{theo}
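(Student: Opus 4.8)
The plan follows Kedlaya \cite{Ked04}. First I would reduce the statement to one about horizontal sections of a single object: both $\Foi(X)$ and $\Fisoc(X)$ are rigid tensor categories with internal Hom $\HHom$, and $\epsilon$ is a tensor functor compatible with $\HHom$ and with duals, so $\Hom(\calE^\dagger,\calF^\dagger)\simeq\Hom(\calO_X^\dagger,\HHom(\calE^\dagger,\calF^\dagger))$ and likewise after applying $\epsilon$. Hence it suffices to show that for every $\calG^\dagger\in\Foi(X)$ the natural map
$$\Hom_{\Foi(X)}(\calO_X^\dagger,\calG^\dagger)\longrightarrow \Hom_{\Fisoc(X)}(\calO_X,\epsilon\calG^\dagger)$$
is bijective, i.e.\ that every Frobenius-equivariant horizontal section of the convergent isocrystal $\epsilon\calG^\dagger$ on the tube $]X[$ extends uniquely to a horizontal section of $\calG^\dagger$ over a strict neighbourhood of $]X[$. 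One may assume $X$ connected. Injectivity (equivalently, faithfulness of $\epsilon$) is immediate: a horizontal section of $\calG^\dagger$ that vanishes on $]X[$ vanishes on a connected strict neighbourhood by the rigid-analytic identity principle, since horizontal sections are determined by their restriction to any non-empty admissible open.

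For surjectivity I would work locally near the boundary. Fix a smooth compactification $\overline X$ of $X$ with $D=\overline X\setminus X$ a normal crossings divisor and realise $\calG^\dagger$ through a frame; extendability of the given section $s$ across $D$ is a local question near each component of $D$, so a Mayer--Vietoris patching argument reduces to the generic local model: a module with integrable connection and a Frobenius structure living on a strict neighbourhood of a polyannulus, with $s$ a horizontal section over the sub-polyannulus cut out by $]X[$. The Frobenius structure is essential here --- the analogous statement for $\epsilon\colon\oi(X)\to\Isoc(X)$ is false. One then invokes Dwork's trick: choosing a Frobenius lift $F$ lying over the $q$-power Frobenius, the section satisfies $s=\Phi(F^{*}s)$, and since $F$ is finite and strongly contracting towards the tube, $\Phi(F^{*}s)$ is defined on a strictly larger neighbourhood than $s$ is; bootstrapping the identity $s=\Phi(F^{*}s)$ propagates $s$ to a full strict neighbourhood of $]X[$. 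The quantitative input is Frobenius descent in the style of Christol--Dwork: a Frobenius structure forces the radius of convergence of the connection to be bounded below by a constant strictly greater than $1$ uniformly along $D$, which is precisely overconvergence of $s$. Uniqueness of the extension (the identity principle once more) makes the local extensions agree on overlaps, producing a global horizontal section of $\calG^\dagger$ over a strict neighbourhood restricting to $s$; it is automatically Frobenius-equivariant by continuity of the relation $s=\Phi(F^{*}s)$.

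The crux --- and the only genuinely hard step --- is this last local bootstrap: promoting convergence to overconvergence uniformly along the boundary divisor, which is where the $p$-adic estimates on radii of convergence and on Frobenius antecedents are needed. The Tannakian reduction, the identity principle, and the patching are formal.
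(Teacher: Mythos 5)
The paper offers no proof of this theorem; it is quoted from \cite{Ked04}, so the only meaningful comparison is with Kedlaya's actual argument. Your formal reductions do match the opening of that argument: the internal-Hom reduction to extending horizontal Frobenius-equivariant sections of $\epsilon\calG^\dagger$, the easy faithfulness via the identity principle on a connected strict neighbourhood, and the localisation of the extension problem at the boundary are all correct and standard.

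The crux, however, is where your proposal breaks down, and it breaks down for a structural reason. A Frobenius lift $F$ lying over the $q$-power Frobenius contracts the \emph{open} tube of a point towards its centre, which is why Dwork's trick trivialises convergent $F$-isocrystals on open polydisks; but in the boundary situation relevant here the inclusion goes the other way. Concretely, for $\overline X=\PP^1$, $X=\mathbb A^1$, tube $\{|t|\le 1\}$ and strict neighbourhoods $\{|t|\le\lambda\}$ with $\lambda>1$, the lift $t\mapsto t^q$ satisfies $F^{-1}(\{|t|\le r\})=\{|t|\le r^{1/q}\}$, and $r^{1/q}<r$ once $r>1$. So if $s$ is defined on $\{|t|\le r\}$ then $\Phi(F^*s)$ is defined only on the \emph{smaller} region $\{|t|\le r^{1/q}\}$: the bootstrap $s=\Phi(F^*s)$ propagates $s$ inward up to the closed tube and then stalls, and cannot push $s$ into a strict neighbourhood. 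Relatedly, your quantitative claim that a Frobenius structure forces the radius of convergence to exceed $1$ uniformly along $D$ is false --- it would make every convergent $F$-isocrystal on a curve overconvergent, i.e.\ $\epsilon$ essentially surjective, contradicting the examples recalled in the paper (the essential image of $\epsilon$ is not even closed under subobjects). What Kedlaya actually does after the reductions is: (i) a fibration-in-curves argument reducing to $\dim X=1$, and (ii) at each boundary point, an application of the $p$-adic local monodromy theorem of Andr\'e--Mebkhout--Kedlaya, which makes the local differential module over the Robba ring unipotent after a finite \'etale cover of the annulus; the extension of $s$ is then checked directly in the unipotent case. That quasi-unipotence input is the genuinely hard step, and it is absent from your sketch; no elementary Frobenius-antecedent estimate replaces it.
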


	\begin{rema}\label{epimorphic:r}
		Even though $\epsilon$ is fully faithful, the essential image is not closed under subquotients. Therefore, the essential image is not a Tannakian subcategory in the sense of §\ref{tannakian:ss}, so that the induced morphism on Tannakian groups is not surjective. Nevertheless, the morphism is an \textit{epimorphism} in the category of affine group schemes (cf. \cite{Borel1}). See also Remark \ref{sl3:r} for further comments.
	\end{rema}

	\begin{defi}
		Suppose that $Y$ is connected and let $\calE$ be an $F$-isocrystal of rank $r$. We denote by $\{a_i^{\eta}(\calE)\}_{1\leq i \leq r}$ the set of generic slopes of $\calE$. We use the convention that $a_1^{\eta}(\calE)\leq \dots \leq a_r^{\eta}(\calE)$, thus the choice of the ordering does not agree with \cite{DK16}. We say that $\calE$ is \textit{isoclinic} if $a_1^{\eta}(\calE)=a_r^{\eta}(\calE)$. A subobject $\calF$ of $\calE$ is \textit{of minimal slope} if it is isoclinic of slope $a_1^{\eta}(\calE)$. See \cite[§3 and §4]{Ked16} for more details about slopes.
	\end{defi}

	\subsection{The fundamental exact sequence}
	\label{fund-exact-sequence:ss}
	We shall briefly review the theory of \textit{monodromy groups} of $F$-isocrystals. These monodromy groups have been studied at the beginning by Crew in \cite{CrewMon}. More recent work can be found in \cite{Dad}. In Proposition \ref{conv-oconv-mon:p}, we recall a fundamental diagram of monodromy groups that we will extensively use in the next sections. 
	
	\begin{nota}\label{fin-fields:nota}
		Let $X_0$ be a smooth geometrically connected variety over $\Fq$. We choose once and for all an $\F$-point $\tilde{x}$ of $X_0$. This defines fibre functors for all the Tannakian categories of isocrystals previously defined. We write $\mathbbm 1_0^{\dagger}$ for the overconvergent $F$-isocrystal $\calO_{X_0}^\dagger$ endowed with its canonical Frobenius structure. For every $\calE_0^{\dagger}\in \Foi(X_0)$ we consider three associated objects. We denote by $\calE^\dagger\in \oi(X_0)$ the overconvergent isocrystal obtained from $\calE_0^{\dagger}$ by forgetting the Frobenius structure. The image of $\calE_0^{\dagger}$ in $\Fisoc(X_0)$ will be denoted by removing the superscript $^\dagger$. At the same time, $\calE$ will be the convergent isocrystal in $\Isoc(X_0)$, obtained from $\calE_0\in \Fisoc(X_0)$ by forgetting its Frobenius structure. Here a summary table.

		\renewcommand{\arraystretch}{1.5}
		\begin{center}
			\vskip1.8em
			
			\begin{tabular}{|lc||c|c|}
				
				\hline
				&&Isocrystal &$F$-isocrystal\\
				\hline\hline
				&Convergent&
				$\calE$&$\calE_0$
				
				\\ \hline
				&Overconvergent\ \ \ &$\calE^\dagger$&$\calE_0^\dagger$
				\\ \hline
			\end{tabular}
			\vskip1.8em
		\end{center}
For each of these objects we have a monodromy group $G(-)$ (see §\ref{tannakian:ss}) with respect to the fibre functor associated to our $\F$ point $\tilde{x}$.
	\end{nota}
	\begin{defi}\label{cst:d}
		
		We say that a convergent isocrystal is \textit{trivial} if it is isomorphic to $\mathbbm 1^{\oplus r}$ for some $r\in \N$. An $F$-isocrystal $\calE_0$ is said \textit{constant} if the convergent isocrystal $\calE$ is trivial. We denote by $\Fisoc_{\cst}(X_0)$ the strictly full subcategory of $\Fisoc(X_0)$ of constant objects. For $\calE_0\in \Fisoc(X_0)$, we denote by $\langle \calE_{0} \rangle_{\cst}\subseteq \langle \calE_{0} \rangle$ the Tannakian subcategory of constant objects and by $G(\calE_0)^\cst$ the Tannakian group of $\langle \calE_{0} \rangle_{\cst}$. Finally, for $\alpha\in \Qpbar$ and $\calE_0\in \Fisoc(X_0)$, we denote by $\calE_0^{(\alpha)}$ the $F$-isocrystal obtained from $\calE_0$ multiplying its Frobenius structure by $\alpha$. We will call $\calE_0^{(\alpha)}$ the \textit{twist} of $\calE_0$ by $\alpha$. We give analogous definitions for overconvergent isocrystals.
	\end{defi}

	\begin{rema}
		\label{cst:r}
		The category $\Fisoc_{\cst}(X_0)$ is a Tannakian subcategory of $\Fisoc(X_0)$ in the sense of §\ref{tannakian:ss}. Let $p_{X_0}:X_0\to \Spec(\Fq)$ be the structural morphism of $X_0$. Every constant $F$-isocrystal is the inverse image via $p_{X_0}$ of an $F$-isocrystal defined over $\Spec(\Fq)$. Since $X_0$ is geometrically connected over $\Fq$, the functor $p_{X_0}^*$ is fully faithful, thus the same is true for $\Fisoc_{\cst}(X_0)$. The category $\Fisoc(\Spec(\Fq))$ is equivalent to the category of $\Qpbar$-vector spaces endowed with a linear automorphism (induced by the Frobenius structure). Hence $\Fisoc(\Spec(\Fq))$ is equivalent to the category of $\Qpbar$-linear representation of $\Z$. Finally, we recall that since $\Foi(\F_q)=\Fisoc(\F_q)$, the natural functor $\epsilon:\Foi(X_0)\rightarrow \Fisoc(X_0)$ induces an equivalence of categories between $\Foi_{\cst}(X_0)$ and $\Fisoc_{\cst}(X_0)$.
	\end{rema}
	The following proposition shows that these monodromy groups fit into exact sequences, analogous to the fundamental exact sequence relating the arithmetic and the geometric monodromy groups of a lisse sheaf. This holds in general for \textit{neutral Tannakian categories with Frobenius}, \cite[Appendix A]{Dad}. See also \cite[Corollary 1.6]{DE20}.
	\begin{prop}\label{conv-oconv-mon:p}\
			Let $X_0$ be a smooth geometrically connected variety over $\Fq$ and let $\calE_0^\dagger$ be an overconvergent $F$-isocrystal over $X_0$. There exists a functorial commutative diagram 
		\begin{equation}
		\begin{tikzcd}
		1\arrow{r} & G(\calE)\arrow{r}\arrow[hook,d] & G(\calE_0)\arrow{r}\arrow[hook,d] & G(\calE_0)^{\cst}\arrow{r}\arrow[d,two heads] &1\\
		1\arrow{r} & G(\calE^{\dagger})\arrow{r} & G(\calE_0^{\dagger})\arrow{r} & G(\calE_0^{\dagger})^{\cst}\arrow{r} & 1
		\end{tikzcd}
		\end{equation}
		with exact rows. The left and the central vertical arrows are injective and the right one is surjective. Moreover, $G(\calE_0)^{\cst}$ and $G(\calE_0^{\dagger})^{\cst}$ are commutative algebraic groups.
	\end{prop}
	\begin{proof}
		The inverse image functor with respect to the $q$-power Frobenius of $X_0$, is an equivalence of categories both for the convergent and overconvergent isocrystals over $X_0$ (see \cite[Corollary 4.10]{Ogu84} and \cite{Laz17}). The exactness of the rows then follows from \cite[A.2.2.(iii)]{Dad}. In addition, the right vertical arrow is surjective because, by the discussion in Remark \ref{cst:r}, the functor $\langle \calE_0^\dagger \rangle_{\cst}\to\langle \calE_0 \rangle_{\cst}$ is fully faithful and the essential image is closed under subquotients. Finally, by \cite[A.2.2.(iv)]{Dad}, the algebraic groups $G(\calE_0)^{\cst}$ and $G(\calE_0^{\dagger})^{\cst}$ are commutative.
	\end{proof}
	\begin{rema}\label{dont-factor:r}
		We do not know whether the natural quotient $ \varphi :G(\calE_0)^{\cst}\twoheadrightarrow G(\calE^{\dagger}_0)^{\cst}$ is an isomorphism in general. Via the Tannakian formalism, to prove the injectivity of $ \varphi $, one has to show that the embedding $\langle \calE_{0}^\dagger \rangle_{\cst}\hookrightarrow \langle \calE_{0} \rangle_{\cst}$ is essentially surjective. While every $\mathcal F_0\in\langle \calE_{0} \rangle_{\cst}$ comes from an object $\calF_0^\dagger$ in $\Foi(X_0)$, we do not know whether such an $\mathcal F_0^\dagger$ lies in $\langle \calE_{0}^\dagger \rangle$. A priori, it might happen that a non-constant subobject $\mathcal F_0\subseteq \mathcal E_0$ admits a constant quotient $\mathcal F_0 \twoheadrightarrow \calT_0$. In this case it is unclear whether $\mathcal T_0$ is in the essential image of  $\langle \calE_{0}^\dagger \rangle_{\cst}\hookrightarrow \langle \calE_{0} \rangle_{\cst}$. This will be the main issue in the proof of Theorem \ref{rank:t}. We bypass the problem by embedding $\calE_0^\dagger$ in an auxiliary overconvergent $F$-isocrystal $\widetilde{\calE}_0^\dagger$ with $G(\widetilde{\calE}_0)^{\cst}\simeq G(\widetilde{\calE}^{\dagger}_0)^{\cst}$. As an application of Theorem \ref{rank:t}, we will also prove in Corollary \ref{red-rank-cst:c} that if $\mathcal E^{\dagger}_0$ is algebraic (cf. §\ref{alg:d}) and semi-simple, then $ \varphi:G(\calE_0)^{\cst}\twoheadrightarrow G(\calE^{\dagger}_0)^{\cst}$ is an isogeny of linear algebraic groups.
	\end{rema}

	\subsection{Maximal tori}
	\label{max-tori:ss}
	In this section, we briefly recall the main theorem on \textit{Frobenius tori} of overconvergent $F$-isocrystals in \cite[§4.2]{Dad} and we use it to prove Theorem \ref{rank:t}. For this task, the main issue is to pass from the arithmetic situation (Corollary \ref{arit-rank:c}) to the geometric one (Theorem \ref{rank:t}). We keep the notation as in §\ref{fin-fields:nota}

\begin{defi}\label{to:d}Let $i_0:x_0\hookrightarrow X_0$ be the closed immersion of a closed point of $X_0$. For every overconvergent $F$-isocrystal $\calE_0^\dagger$ we have an inclusion $G(i_0^*\calE_0^\dagger) \hookrightarrow G(\calE_0^\dagger)$, with $G(i_{0}^*\calE_0^\dagger)$ commutative. The image of the maximal torus of $G(i_{0}^*\calE_0^\dagger)$ in $G(\calE_0^\dagger)$ is the \textit{Frobenius torus} of $\calE_{0}^\dagger$ at $x_0$, denoted by $T_{x_0}(\calE_0^\dagger)$.
\end{defi}


Thanks to Deligne's conjecture for lisse sheaves and overconvergent $F$-isocrystals, for a certain class of overconvergent $F$-isocrystals is possible to construct \textit{$\ell$-adic companions} where $\ell$ is a prime different from $p$, \cite{Abeesnault}. From this construction one can translate results known for lisse sheaves to overconvergent $F$-isocrystals. Theorem \ref{frob-tori:t} is an example of such a technique (see also §\ref{Langlands:r}). For the existence of companions one needs some mild assumptions on the eigenvalues of the Frobenii at closed points.

\begin{defi}\label{alg:d}		
	An overconvergent $F$-isocrystal $\calE_0^\dagger$ is \textit{algebraic} if the eigenvalues of the Frobenii at closed points (cf. \cite[Definition 9.5]{Ked16}) are algebraic numbers.
	\end{defi}

	\begin{theo}[{\normalfont\cite[Theorem 4.2.11]{Dad}}]\label{frob-tori:t} Let $\calE_0^\dagger$ be an algebraic overconvergent $F$-isocrystal over $X_0$. There exists a Zariski-dense set of closed points $x_0$ of $X_0$ such that the torus $T_{x_0}(\calE_0^\dagger)$ is a maximal torus of $G(\calE_0^\dagger)$.
	\end{theo}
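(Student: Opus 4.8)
The plan is to translate the statement into a question about lisse $\ell$-adic sheaves by means of companions, and there to settle it by a Chebotarev density argument inside the monodromy group, in the spirit of Serre's mechanism of Frobenius tori. \emph{First, a reduction to the semisimple case.} Passing from $\calE_0^\dagger$ to its semisimplification changes neither the characteristic polynomial of $\mathrm{Frob}_{x_0}$ at any closed point $x_0$ — hence none of the Frobenius tori $T_{x_0}(\calE_0^\dagger)$, as these depend only on the Frobenius eigenvalues at $x_0$ — nor, up to isomorphism on maximal tori, the monodromy group, since $G((\calE_0^\dagger)^{\mathrm{ss}})$ is the reductive quotient of $G(\calE_0^\dagger)$. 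So we may assume $\calE_0^\dagger$ semisimple and algebraic, in which case $G:=G(\calE_0^\dagger)$ is reductive.

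\emph{Passage to a companion.} Fix $\ell\neq p$ and a field isomorphism $\iota\colon\Qpbar\iso\Qlbar$. By Deligne's conjecture for overconvergent $F$-isocrystals (the crystalline Langlands correspondence), $\calE_0^\dagger$ admits an $\ell$-adic companion: a semisimple lisse Weil $\Qlbar$-sheaf $\calF_0$ on $X_0$ whose Frobenius characteristic polynomial at every closed point is the $\iota$-transform of that of $\calE_0^\dagger$. Fix a faithful object of $\langle\calE_0^\dagger\rangle$ and its companion in $\langle\calF_0\rangle$. For any closed point $x_0$, the dimension of $T_{x_0}(\calE_0^\dagger)$ equals the rank of the subgroup of $\Qpbar^\times$ generated by the Frobenius eigenvalues at $x_0$ on that faithful object — because in a faithful representation the weights generate the character lattice of every maximal torus — and likewise for $\calF_0$ over $\Qlbar$; since $\iota$ is a field isomorphism, these two ranks agree. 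Combined with $\rk G(\calE_0^\dagger)=\rk G(\calF_0)$, provided by the comparison of monodromy groups of companions, this shows that $T_{x_0}(\calE_0^\dagger)$ is a maximal torus of $G(\calE_0^\dagger)$ if and only if $T_{x_0}(\calF_0)$ is a maximal torus of $G(\calF_0)$. It therefore suffices to prove the statement for $\calF_0$.

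\emph{The lisse case.} Here $G':=G(\calF_0)$ is reductive, and by Chebotarev the geometric Frobenii at the closed points of $X_0$ are Zariski-dense in $G'$. Let $(G')^{\mathrm{rs}}\subseteq G'$ be the conjugation-invariant open locus of regular semisimple elements; its complement has strictly smaller dimension, so by Chebotarev the set of $x_0$ with $\mathrm{Frob}_{x_0}\in (G')^{\mathrm{rs}}$ is Zariski-dense in $X_0$. For such $x_0$, $\mathrm{Frob}_{x_0}$ lies in a unique maximal torus $S_{x_0}$ of $G'$ and $T_{x_0}(\calF_0)=\overline{\langle\mathrm{Frob}_{x_0}\rangle}^{\,\circ}\subseteq S_{x_0}$; one must show equality holds on a still Zariski-dense subset, i.e.\ that $\mathrm{Frob}_{x_0}$ topologically generates $S_{x_0}$. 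This fails exactly when the Frobenius eigenvalues at $x_0$ satisfy a multiplicative relation not imposed by $G'$, and since those eigenvalues are $q^{\deg x_0}$-Weil numbers constrained by the Weil bounds on their archimedean absolute values and by a fixed degree bound (from the fixed rank and algebraicity), a standard height argument bounds the complexity of such a relation independently of $x_0$. Hence the defective closed points lie on a \emph{finite} union of proper conjugation-invariant closed subvarieties of $G'$, each carrying a non-Zariski-dense set of $x_0$ by Chebotarev; removing them leaves a Zariski-dense set of closed points $x_0$ with $T_{x_0}(\calF_0)=S_{x_0}$ a maximal torus.

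\emph{Main obstacle.} The heart of the matter is this last step: a priori the locus of closed points with non-maximal Frobenius torus is cut out by infinitely many multiplicative relations among Frobenius eigenvalues, and it is precisely the arithmetic of Weil numbers — uniform control of degrees and heights — that reduces this to finitely many relations, so that Chebotarev can be brought to bear. The remaining ingredients are formal: the Tannakian computation of $\dim T_{x_0}$ in terms of multiplicative ranks of eigenvalues, and the (substantial, but here used as a black box) input of Abe's companions theorem for overconvergent $F$-isocrystals.
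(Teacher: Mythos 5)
This theorem is not proved in the paper at all: it is imported verbatim from \cite[Theorem 4.2.10]{Dad}, so there is no internal proof to compare against. Your sketch follows the same strategy as that reference (and as Serre's original mechanism of Frobenius tori, adapted to $F$-isocrystals via Abe's companions), and the overall architecture --- reduce to the semisimple case, transfer to an $\ell$-adic companion, run Chebotarev against the regular semisimple locus, and control the defective locus by the arithmetic of Weil numbers --- is the correct one.

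Two of your steps, however, are load-bearing assertions rather than arguments. First, the equality $\rk G(\calE_0^\dagger)=\rk G(\calF_0)$ for companions is itself a nontrivial theorem, and you must be careful that its proof does not already invoke maximality of Frobenius tori on the $p$-adic side (Chin-style independence-of-$\ell$ arguments typically do use Frobenius tori); the non-circular route is to compare the two monodromy groups through the dimensions of invariants in all tensor constructions $V^{\otimes m}\otimes(V^\vee)^{\otimes n}$, which are determined by the $L$-functions and hence agree for companions --- without making this explicit, your transfer step only yields $\rk G(\calE_0^\dagger)\geq\rk G(\calF_0)$, which is the wrong inequality for concluding maximality of $T_{x_0}(\calE_0^\dagger)$. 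Second, your height argument invokes ``Weil bounds on archimedean absolute values,'' but the hypothesis is only algebraicity, not purity; you need the intermediate reduction (as in the proof of Theorem \ref{rank:t} of the paper) that each irreducible summand becomes pure of weight $0$ after an algebraic twist with finite-order determinant, so that the eigenvalues are Weil numbers up to a controlled twist and Serre's finiteness of multiplicative relation types applies. With those two points supplied, the sketch matches the cited proof.
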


\begin{rema}
	It is worth mentioning that when $\mathcal E_0^\dagger$ is pure Theorem \ref{frob-tori:t} is also a consequence of the new crystalline Čebotarev density theorem proven by Hartl and P\'al in \cite[Theorem 12.2]{HP}. Note that $\mathcal E_0^\dagger$ is pure if it comes from geometry (cf. §\ref{weak:ss}) or, by \cite[Theorem 2.7]{Abeesnault}, if it is irreducible with finite order determinant. 
\end{rema}

	\begin{coro}\label{arit-rank:c}
		Let $\calE_0^\dagger$ be an algebraic overconvergent $F$-isocrystal. The closed subgroup $G(\calE_0)\subseteq G(\calE^{\dagger}_0)$ is a subgroup of maximal rank.
	\end{coro}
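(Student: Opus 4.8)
The plan is to exhibit a single closed point of $X_0$ whose Frobenius torus is a maximal torus of $G(\calE_0^{\dagger})$, and then to check that this torus is already contained in the subgroup $G(\calE_0)$; the equality $\rk(G(\calE_0))=\rk(G(\calE_0^{\dagger}))$ will follow immediately.

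Since $\calE_0^{\dagger}$ is algebraic, Theorem \ref{frob-tori:t} applies and produces a closed point $i_0:x_0\hookrightarrow X_0$ for which the Frobenius torus $T_{x_0}(\calE_0^{\dagger})$ is a maximal torus of $G(\calE_0^{\dagger})$. Fix such an $x_0$. By Definition \ref{to:d}, $T_{x_0}(\calE_0^{\dagger})$ is the image, under the inclusion $G(i_0^*\calE_0^{\dagger})\hookrightarrow G(\calE_0^{\dagger})$, of the (unique) maximal torus of the commutative group $G(i_0^*\calE_0^{\dagger})$. Hence it is enough to show that this inclusion factors through $G(\calE_0)$.

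To this end I would use that the closed point $x_0$ is proper, so that every $F$-isocrystal on $x_0$ admits an overconvergent extension; together with Theorem \ref{kedlaya:t} this means that $\epsilon:\Foi(x_0)\to\Fisoc(x_0)$ is an equivalence of categories, sending $i_0^*\calE_0^{\dagger}$ to $i_0^*\calE_0=i_0^*\epsilon(\calE_0^{\dagger})$. Because restriction along $i_0$ commutes with $\epsilon$, one has a commutative square of tensor functors whose top arrow is $\epsilon:\langle\calE_0^{\dagger}\rangle\to\langle\calE_0\rangle$, whose bottom arrow is the equivalence $\epsilon:\langle i_0^*\calE_0^{\dagger}\rangle\to\langle i_0^*\calE_0\rangle$, and whose two vertical arrows are the restriction functors $i_0^*$. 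Applying Tannakian duality --- with every monodromy group computed with respect to the fibre functor attached to some $\F$-point of $X_0$ lying over $x_0$, which is harmless since the reductive rank of a monodromy group does not depend on the choice of fibre functor --- this dualizes to a commutative square of affine group schemes: the top arrow becomes an isomorphism $G(i_0^*\calE_0)\iso G(i_0^*\calE_0^{\dagger})$, the bottom arrow becomes the closed immersion $G(\calE_0)\hookrightarrow G(\calE_0^{\dagger})$ induced by $\epsilon$, and the vertical arrows become the canonical ``restriction to $x_0$'' inclusions $G(i_0^*\calE_0)\hookrightarrow G(\calE_0)$ and $G(i_0^*\calE_0^{\dagger})\hookrightarrow G(\calE_0^{\dagger})$, the latter being exactly the map of Definition \ref{to:d}. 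Chasing this square, the inclusion $G(i_0^*\calE_0^{\dagger})\hookrightarrow G(\calE_0^{\dagger})$ factors through $G(\calE_0)$; transporting the maximal torus of $G(i_0^*\calE_0^{\dagger})$ along the top isomorphism, we obtain $T_{x_0}(\calE_0^{\dagger})\subseteq G(\calE_0)$. Therefore $G(\calE_0)$ contains a maximal torus of $G(\calE_0^{\dagger})$, as claimed.

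I expect the only genuinely substantive ingredient to be Theorem \ref{frob-tori:t} --- the existence of a Frobenius torus of maximal dimension --- which is also the one place where the hypothesis that $\calE_0^{\dagger}$ is algebraic is used; everything else is formal Tannakian bookkeeping. The single point requiring a little care is the compatibility of the various inclusions, i.e.\ that the Frobenius torus, which a priori is only a closed subgroup of the larger group $G(\calE_0^{\dagger})$, in fact lies inside the smaller group $G(\calE_0)$; this is precisely where the properness of the closed point $x_0$ --- equivalently, the equivalence between convergent and overconvergent $F$-isocrystals over $x_0$ --- enters.
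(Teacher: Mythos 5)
Your proposal is correct and follows essentially the same route as the paper: both invoke Theorem \ref{frob-tori:t} to find a closed point with maximal Frobenius torus, observe that over a closed point the convergent and overconvergent monodromy groups coincide, and chase the resulting commutative square to place $T_{x_0}(\calE_0^{\dagger})$ inside $G(\calE_0)$. The only cosmetic difference is that the paper cites Remark \ref{cst:r} for the isomorphism $G(i_0^*\calE_0)\iso G(i_0^*\calE_0^{\dagger})$, whereas you argue via properness of $x_0$ and Theorem \ref{kedlaya:t}; both justifications are fine.
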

	\begin{proof}
	 Thanks to Theorem \ref{frob-tori:t}, we can find a closed embedding of a closed point $i_0:x_0\hookrightarrow X_0$ such that $T_{x_0}(\calE_0^\dagger)$ is a maximal torus of $G(\calE_0^\dagger)$.
	 We have a commutative diagram
		\begin{center}
			\begin{tikzcd}
				G(i_{0}^*\calE_0)\arrow[hook,r]\arrow["\sim" labl,d] & G(\calE_0)\arrow[hook,d]\\
				G(i_{0}^*\calE^\dagger_0)\arrow[hook,r] & G(\calE^{\dagger}_0),\\
			\end{tikzcd}
		\end{center}
where the morphism $G(i_0^*\calE_0)\to G(i_0^*\calE_{0}^\dagger)$ is an isomorphism by Remark \ref{cst:r}. Since $G(i_{0}^*\calE^\dagger_0)$ is a subgroup of $G(\calE_0^\dagger)$ of maximal rank, the same is true for the subgroup $G(\calE_0)\subseteq G(\calE_0^\dagger)$. 
	\end{proof}

	\begin{coro}\label{mult-type:c}
		If $\calE_{0}^\dagger$ is an algebraic semi-simple overconvergent $F$-isocrystal, then $G(\calE_0)^{\cst}$ and $G(\calE^{\dagger}_0)^{\cst}$ are groups of multiplicative type.
	\end{coro}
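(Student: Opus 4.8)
\emph{The approach.} Both $G(\calE_0)^{\cst}$ and $G(\calE_0^\dagger)^{\cst}$ are commutative by Remark \ref{cst:r}, and for a commutative affine group over a field of characteristic zero being of multiplicative type is the same as being reductive (a unipotent element would generate a connected unipotent subgroup, normal by commutativity, hence contained in the trivial unipotent radical; conversely a commutative affine group splits as a product of a group of multiplicative type and a unipotent one). So it is enough to prove that both groups are reductive. Since $\calE_0^\dagger$ is semi-simple, $G(\calE_0^\dagger)$ is reductive, and by Proposition \ref{conv-oconv-mon:p} the group $G(\calE_0^\dagger)^{\cst}$ is a quotient of $G(\calE_0^\dagger)$, hence reductive; this settles the overconvergent side. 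For the convergent side, $G(\calE_0)^{\cst}$ is reductive if and only if the Tannakian subcategory $\langle\calE_0\rangle_{\cst}$ is semi-simple, i.e.\ if and only if every constant $F$-isocrystal occurring in $\langle\calE_0\rangle$ (equivalently, every object of $\langle\calE_0\rangle_{\cst}$) has semi-simple Frobenius.

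\emph{Reduction to the unit-root case.} Let $\calF_0\in\langle\calE_0\rangle_{\cst}$. As $G(\calE_0^\dagger)$ is reductive, every tensor construction $T(\calE_0^\dagger)$ is semi-simple, and $\calF_0$ is a subquotient of some $T(\calE_0)=\epsilon(T(\calE_0^\dagger))$. The slope filtration of a constant $F$-isocrystal splits into its isoclinic components (over $\Spec(\Fq)$ this is the grouping of the generalized eigenspaces of Frobenius according to the valuation of the eigenvalue), so we may assume $\calF_0$ isoclinic, say of slope $s$; by functoriality of the slope filtration, $\calF_0$ is then a subquotient of the graded piece $\mathrm{gr}^s(\epsilon(T(\calE_0^\dagger)))$. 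Twisting by a constant rank-one $F$-isocrystal of slope $-s$ (which does not change whether the Frobenius is semi-simple), we are reduced to the following statement: for $\calG_0^\dagger$ a semi-simple (algebraic) overconvergent $F$-isocrystal, the constant subquotients of the unit-root $F$-isocrystal $\mathrm{gr}^0(\epsilon(\calG_0^\dagger))$ have semi-simple Frobenius. As a first step one can handle the maximal constant subobject $\calC\subseteq\epsilon(\calG_0^\dagger)$: it equals $\calO_{X_0}\otimes\Hom_{\Isoc(X_0)}(\calO_{X_0},\calG)$ with its induced Frobenius, by Remark \ref{cst:r} it lifts to a constant overconvergent object $\calC^\dagger$, and by full faithfulness (Theorem \ref{kedlaya:t}) the inclusion is $\epsilon$ of a morphism $\calC^\dagger\to\calG_0^\dagger$, which is still a monomorphism (faithfulness of $\epsilon$ together with the existence of kernels in $\Foi(X_0)$); since $\calG_0^\dagger$ is semi-simple, $\calC^\dagger$ is a direct summand, hence a direct sum of rank-one constant objects, so the Frobenius on $\calC$ is semi-simple. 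Dually, the maximal constant quotient of $\epsilon(\calG_0^\dagger)$ also has semi-simple Frobenius.

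\emph{The main obstacle.} What remains is to pass from the maximal constant sub/quotient to arbitrary constant subquotients; equivalently, one must prove that the graded pieces $\mathrm{gr}^s(\epsilon(\calE_0^\dagger))$ of the convergent $F$-isocrystal underlying a semi-simple algebraic overconvergent $F$-isocrystal are semi-simple (once this is known, every constant subquotient of $\mathrm{gr}^s$ is a summand, hence a sum of rank-one constant objects, and one concludes via the previous paragraph; consequently $\langle\calE_0\rangle_{\cst}$ is semi-simple and $G(\calE_0)^{\cst}$ is reductive, hence of multiplicative type). This is where the algebraicity hypothesis genuinely enters: after a twist each $\mathrm{gr}^s$ is a unit-root $F$-isocrystal, hence a $p$-adic lisse Weil sheaf, and one imports the required semi-simplicity from the $\ell$-adic side through the formalism of companions (cf. \cite{Dad}), using also the slope-theoretic input of \cite{Ked16}; the compatibility of the slope filtration with tensor constructions is what allows the reduction in the first place. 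I expect this last point — the semi-simplicity of the graded pieces of the slope filtration of $\epsilon$ of a semi-simple algebraic overconvergent $F$-isocrystal — to be the crux of the argument, the purely group-theoretic and Tannakian parts (including the use of Corollary \ref{arit-rank:c} in the ambient picture) being formal.
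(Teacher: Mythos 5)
Your treatment of $G(\calE_0^\dagger)^{\cst}$ is correct and coincides with the paper's: it is a commutative quotient of $G(\calE_0^\dagger)$, which is reductive by semi-simplicity of $\calE_0^\dagger$, and a commutative reductive group in characteristic $0$ is of multiplicative type. The convergent side, however, contains a genuine gap. You correctly reduce the reductivity of $G(\calE_0)^{\cst}$ to the semi-simplicity of $\langle\calE_0\rangle_{\cst}$, but you then reduce \emph{that} to the assertion that the slope-graded pieces $\mathrm{gr}^s(\epsilon(\calG_0^\dagger))$ of a semi-simple algebraic overconvergent $F$-isocrystal are semi-simple convergent $F$-isocrystals --- and at this point you stop, saying you ``expect'' this to be the crux and that it should be imported from the $\ell$-adic side via companions. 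This assertion is nowhere proved in your argument, and it cannot be obtained from companions: companions control Frobenius eigenvalues at closed points, not the internal structure of the underlying convergent object. The semi-simplicity of the unit-root (or any graded) part of a semi-simple overconvergent $F$-isocrystal is precisely the kind of statement that is \emph{not} known --- it is closely tied to Crew's parabolicity question and Chai's conjecture cited in the introduction, and is strictly stronger than the corollary you are asked to prove. So your route replaces the statement by a harder, unproven one.

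The paper avoids all of this with a short group-theoretic argument, and this is where the algebraicity hypothesis actually enters. By Corollary \ref{arit-rank:c} (existence of Frobenius tori of maximal dimension), $G(\calE_0)$ is a subgroup of maximal rank of the reductive group $G(\calE_0^\dagger)$. Lemma \ref{coh-van:l} then asserts that a maximal-rank subgroup of a reductive group admits no non-trivial morphism to a unipotent group. Since $G(\calE_0)^{\cst}$ is a commutative quotient of $G(\calE_0)$, it splits (in characteristic $0$) as a product of a group of multiplicative type and its unipotent radical, so $R_u(G(\calE_0)^{\cst})$ is itself a quotient of $G(\calE_0)$ and must therefore be trivial. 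Hence $G(\calE_0)^{\cst}$ is commutative and reductive, i.e.\ of multiplicative type. The maximal-rank statement, which you set aside as ``formal'' ambient material, is in fact the engine of the proof; your categorical reduction through the slope filtration is the step that should be discarded.
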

	\begin{proof}
		By Proposition \ref{conv-oconv-mon:p}, the algebraic groups $G(\calE^{\dagger}_0)^{\cst}$ and $G(\calE_0)^{\cst}$ are commutative. It suffices to verify that they are also reductive. The former is a quotient of $G(\calE_{0}^\dagger)$, which is reductive because $\calE_{0}^\dagger$ is semi-simple. The latter is a quotient of $G(\calE_0)$, which by Corollary \ref{arit-rank:c} is a subgroup of $G(\calE_{0}^\dagger)$ of maximal rank. Let $R_u(G(\calE_0)^{\cst})$ be the unipotent radical of $G(\calE_0)^{\cst}$. Since $G(\calE_0)^{\cst}$ is commutative, $R_u(G(\calE_0)^{\cst})$ is a quotient of $G(\calE_0)$. Thus $R_u(G(\calE_0)^{\cst})$ is trivial by the group-theoretic Lemma \ref{coh-van:l} below. This concludes the proof. 
	\end{proof}

	\begin{rema}
	Note that even the $\ell$-adic analogue of Corollary \ref{mult-type:c} is true and it follows easily from the homotopy exact sequence of the étale fundamental group.
\end{rema}
	\begin{lemm}
		\label{coh-van:l}
		Let $\KK$ be an algebraically closed field of characteristic $0$, let $G$ be a reductive group over $\KK$ and let $H$ be a subgroup of $G$ of maximal rank. Every morphism from $H$ to a unipotent group is trivial. Equivalently, the group $\Ext^1_H(\mathbb K,\mathbb{K})$ vanishes.
	\end{lemm}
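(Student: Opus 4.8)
The plan is to reduce the statement to the vanishing of $\Hom(H,\Ga)$, where $\Hom$ denotes homomorphisms of algebraic groups, and then to exploit the root space decomposition of $\mathrm{Lie}(G)$ relative to a maximal torus of $G$ sitting inside $H$. For the equivalence asserted in the lemma, one notes that an extension $0\to\KK\to E\to\KK\to 0$ of $H$-representations is the same datum as a homomorphism $H\to\Ga$ (the cocycle condition degenerates because the outer terms are trivial), so $\Ext^1_H(\KK,\KK)\simeq\Hom(H,\Ga)$; and since $\Ga$ is unipotent while every nontrivial unipotent group in characteristic $0$ surjects onto $\Ga$ — for instance through its abelianization, which is a nonzero vector group because unipotent groups in characteristic $0$ are nilpotent — a homomorphism from $H$ to a unipotent group is trivial if and only if $\Hom(H,\Ga)=0$. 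One further reduces to the case $H$ connected: any $f\colon H\to\Ga$ restricts trivially to $H^{\circ}$ by the connected case, hence factors through the finite group $H/H^{\circ}$, which has no nontrivial homomorphism to the torsion-free group $\Ga$; and $H^{\circ}$ still has maximal rank, since a maximal torus of $G$ contained in $H$ is connected and thus lies in $H^{\circ}$.

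So suppose $H$ is connected and fix a maximal torus $T$ of $G$ with $T\subseteq H$, which exists because $\rk(H)=\rk(G)$. As $G$ is reductive, $Z_G(T)=T$, so under the adjoint action of $T$ one has $\mathrm{Lie}(G)=\mathrm{Lie}(T)\oplus\bigoplus_{\alpha\in\Phi}\mathfrak g_\alpha$ with each $\mathfrak g_\alpha$ a line and each root $\alpha$ nontrivial on $T$. Since $T\subseteq H$, the subspace $\mathrm{Lie}(H)\subseteq\mathrm{Lie}(G)$ contains $\mathrm{Lie}(T)$ and is stable under $\mathrm{Ad}(T)$, hence is of the form $\mathrm{Lie}(T)\oplus\bigoplus_{\alpha\in\Psi}\mathfrak g_\alpha$ for some $\Psi\subseteq\Phi$. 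Now I would assume, for contradiction, that there is a nonzero homomorphism $f\colon H\to\Ga$. Then $df\neq 0$ (a homomorphism of connected groups in characteristic $0$ with vanishing differential is trivial), so $K:=\ker f$ has codimension $1$ and is normal in $H$; moreover $T\subseteq K$ because a torus has no nontrivial homomorphism to $\Ga$. Applying the same weight argument to $\mathrm{Lie}(K)$ — which contains $\mathrm{Lie}(T)$, is $\mathrm{Ad}(T)$-stable, and has codimension $1$ in $\mathrm{Lie}(H)$ — one sees that $\mathrm{Lie}(K)$ is obtained from $\mathrm{Lie}(H)$ by deleting a single root space $\mathfrak g_\beta$ with $\beta\in\Psi$; in particular $df$ restricts to an isomorphism $\mathfrak g_\beta\iso\mathrm{Lie}(\Ga)$.

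The contradiction then comes from $T$-equivariance. Because $\Ga$ is commutative and $f$ kills $T$, conjugation by $t\in T$ fixes $f$, so differentiating yields $df\circ\mathrm{Ad}(t)=df$ on $\mathrm{Lie}(H)$; evaluating on a nonzero $v\in\mathfrak g_\beta$ gives $\beta(t)\,df(v)=df(v)$ for all $t$, forcing $\beta$ to be trivial on $T$ — impossible, since $\beta$ is a root. Hence $f=0$, which proves the lemma. The point to get right is the structure-theoretic step, namely that $H$ is neither assumed connected nor reductive (Borel subgroups are of maximal rank and not reductive), so one cannot simply quote the absence of additive characters on reductive groups: the maximal-rank hypothesis is used exactly to guarantee that every additive direction in $\mathrm{Lie}(H)$ is a root space, on which $T$ acts through a nontrivial character. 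Everything else is formal, so I do not expect a serious obstacle.
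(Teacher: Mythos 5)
Your proof is correct, but the decisive step is different from the one in the paper. Both arguments begin identically: reduce to showing $\Hom(H,\Ga)=0$, assume a nontrivial $\varphi\colon H\to\Ga$, and observe that its kernel $K$ contains a maximal torus $T$ of $G$ (since tori admit no additive characters) and is normal of codimension $1$ in $H$. At that point the paper invokes a structural fact about maximal-rank subgroups, namely $N_G(K^{\circ})^{\circ}=K^{\circ}$ (Milne, Lemma 18.52): since $H$ normalizes $K$, this forces $K^{\circ}=H^{\circ}$, contradicting $H/K\simeq\Ga$. You instead make the contradiction explicit on the Lie algebra: writing $\mathrm{Lie}(H)=\mathrm{Lie}(T)\oplus\bigoplus_{\alpha\in\Psi}\mathfrak g_{\alpha}$ (which uses $Z_G(T)=T$ and the one-dimensionality of root spaces), you identify $\mathrm{Lie}(H)/\mathrm{Lie}(K)$ with a single root line $\mathfrak g_{\beta}$ and then use the $T$-equivariance of $d\varphi$ — which holds because $\Ga$ is commutative — to conclude $\beta=1$, absurd. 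Your route is longer but more self-contained: it replaces the normalizer lemma by the standard weight-space decomposition and makes visible exactly where the maximal-rank hypothesis enters (every ``additive direction'' of $\mathrm{Lie}(H)$ carries a nontrivial $T$-weight). The paper's route is shorter and purely group-theoretic, at the cost of citing a less elementary result. Your reductions (to $\Ga$ via the abelianization of a unipotent group, to $H$ connected, and the identification $\Ext^1_H(\KK,\KK)\simeq\Hom(H,\Ga)$) are all sound; the paper handles the first of these by viewing a unipotent group as an iterated extension of copies of $\Ga$ and does not spell out the other two.
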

	\begin{proof}
		Every unipotent group is an iterated extension of copies of $\mathbb G_a$. Therefore, it is enough to show that every morphism from $H$ to $\mathbb G_a$ is trivial. Suppose there exists a non-trivial morphism $\varphi: H\to \mathbb{G}_a$. As $\mathrm{char}(\KK)=0$, the image of $\varphi$ is $\Ga$ itself. We write $K$ for the kernel of $\varphi$ and $N_{G}(K^{\circ})$ (resp. $N_{G}(K)$) for be the normaliser of $K^{\circ}$ (resp. $K$) in $G$. Every map from a torus to $\Ga$ is trivial, thus the subgroup $K\subseteq G$ has maximal rank as well. This implies by \cite[Lemma 18.52]{Milne} that $N_{G}(K^{\circ})^\circ=K^\circ$. By construction, $K$ is normal in $H$, thus $H$ is contained in $N_{G}(K)$, which in turn is contained in $N_{G}(K^{\circ})$. This implies that $K^\circ=H^\circ$, thus that $H/K$ is a finite group scheme, contradicting the fact that $H/K\simeq \Ga$.
	\end{proof}

	\begin{theo}
		\label{rank:t}
		Let $\calE_0^{\dagger}$ be an overconvergent $F$-isocrystal over $X_0$. The subgroup $G(\calE)\subseteq G(\calE^\dagger)$ has maximal rank. 
		
	\end{theo}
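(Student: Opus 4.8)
The plan is to reduce the geometric statement to the arithmetic one (Corollary \ref{arit-rank:c}) by constructing a well-chosen auxiliary overconvergent $F$-isocrystal. First I would reduce to the case where $\calE_0^\dagger$ is semi-simple: replacing $\calE_0^\dagger$ by its semi-simplification changes neither $G(\calE)$ nor $G(\calE^\dagger)$ up to the appropriate quotients by unipotent radicals, and having maximal rank is insensitive to passing to such quotients (a subgroup has maximal rank iff its image in every quotient does, together with the fact that $R_u$ contains no torus). Next, since the eigenvalues of Frobenii at closed points of a given object need not be algebraic, I would twist: after twisting $\calE_0^\dagger$ by a suitable scalar $\alpha \in \Qpbar^\times$ one may assume $\det \calE_0^\dagger$ has finite order, and then the object becomes algebraic (all Frobenius eigenvalues are algebraic); twisting by a scalar does not change $G(\calE)$ or $G(\calE^\dagger)$, since the twisting only affects the constant part. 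So from now on $\calE_0^\dagger$ is algebraic and semi-simple.

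By Corollary \ref{arit-rank:c} we already know that $G(\calE_0) \subseteq G(\calE_0^\dagger)$ has maximal rank. The remaining task is to transfer this from the arithmetic monodromy groups to the geometric ones, i.e.\ to pass from the top row of the diagram in Proposition \ref{conv-oconv-mon:p} being an extension by $G(\calE_0)^{\cst}$ to control of $G(\calE) \hookrightarrow G(\calE^\dagger)$. The key point, as the authors signal, is that the map $G(\calE_0)^{\cst} \twoheadrightarrow G(\calE_0^\dagger)^{\cst}$ need not be an isomorphism, so one cannot directly conclude. The device is to enlarge $\calE_0^\dagger$: I would find an auxiliary algebraic semi-simple $\widetilde{\calE}_0^\dagger \in \Foi(X_0)$ containing $\calE_0^\dagger$ in its Tannakian category, with $\langle \widetilde{\calE}^\dagger\rangle = \langle \calE^\dagger\rangle$ and $\langle \widetilde{\calE}\rangle = \langle \calE\rangle$ (so $G(\widetilde{\calE}^\dagger) = G(\calE^\dagger)$ and $G(\widetilde{\calE}) = G(\calE)$), but such that the constant parts match: $G(\widetilde{\calE}_0)^{\cst} \iso G(\widetilde{\calE}_0^\dagger)^{\cst}$. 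Concretely, one takes $\widetilde{\calE}_0^\dagger := \calE_0^\dagger \oplus \bigoplus_j \calF_{0,j}^\dagger$ where the $\calF_{0,j}^\dagger$ are overconvergent $F$-isocrystals whose convergent images generate $\langle \calE_0\rangle_{\cst}$ — such overconvergent lifts exist because every constant $F$-isocrystal lifts to an overconvergent one (Remark \ref{cst:r}), being pulled back from $\Spec(\Fq)$. Since these $\calF_{0,j}^\dagger$ are themselves constant, adjoining them does not enlarge $\langle\calE^\dagger\rangle$ or $\langle\calE\rangle$ (their underlying isocrystals are trivial), yet it forces $\langle \widetilde{\calE}_0\rangle_{\cst} = \langle \widetilde{\calE}_0^\dagger\rangle_{\cst}$ by construction, whence $\varphi$ for $\widetilde{\calE}_0^\dagger$ is an isomorphism.

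Granting such a $\widetilde{\calE}_0^\dagger$, the conclusion follows by a diagram chase: apply Corollary \ref{arit-rank:c} to $\widetilde{\calE}_0^\dagger$ to get that $G(\widetilde{\calE}_0) \subseteq G(\widetilde{\calE}_0^\dagger)$ has maximal rank; then in the commutative diagram with exact rows
\begin{center}
\begin{tikzcd}
0\arrow{r} & G(\widetilde{\calE})\arrow{r}\arrow[hook,d] & G(\widetilde{\calE}_0)\arrow{r}\arrow[hook,d] & G(\widetilde{\calE}_0)^{\cst}\arrow{r}\arrow[d,"\sim"] &0\\
0\arrow{r} & G(\widetilde{\calE}^{\dagger})\arrow{r} & G(\widetilde{\calE}_0^{\dagger})\arrow{r} & G(\widetilde{\calE}_0^{\dagger})^{\cst}\arrow{r} & 0
\end{tikzcd}
\end{center}
a maximal torus $T$ of $G(\widetilde{\calE}_0^\dagger)$ lies in $G(\widetilde{\calE}_0)$; its image in the constant quotient is carried isomorphically to the image in $G(\widetilde{\calE}_0^\dagger)^{\cst}$, so comparing dimensions of maximal tori and using that the kernels of the horizontal surjections are exactly $G(\widetilde{\calE})$ and $G(\widetilde{\calE}^\dagger)$ shows $\rk(G(\widetilde{\calE})) = \rk(G(\widetilde{\calE}^\dagger))$. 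Since $G(\widetilde{\calE}) = G(\calE)$ and $G(\widetilde{\calE}^\dagger) = G(\calE^\dagger)$, this is the claim. I expect the main obstacle to be the construction and verification of the auxiliary $\widetilde{\calE}_0^\dagger$ — specifically checking that adjoining the overconvergent lifts of the constant generators of $\langle \calE_0\rangle$ genuinely achieves $G(\widetilde{\calE}_0)^{\cst} \iso G(\widetilde{\calE}_0^\dagger)^{\cst}$ without enlarging the geometric groups — together with the bookkeeping ensuring the rank comparison in the final diagram is exact rather than merely an inequality.
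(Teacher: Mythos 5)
Your proposal follows essentially the same route as the paper: reduce to the semi-simple algebraic case, invoke Corollary \ref{arit-rank:c} for the arithmetic groups, and repair the failure of $\varphi\colon G(\calE_0)^{\cst}\to G(\calE_0^{\dagger})^{\cst}$ to be an isomorphism by adjoining overconvergent lifts of the constant objects of $\langle\calE_0\rangle$ (the paper adjoins lifts of generators of the character group $X^*(G(\calE_0)^{\cst})$, which amounts to the same thing since that group is of multiplicative type, and then checks $\varphi^*$ is surjective on characters). The one slip is in the twisting step: a single global twist by a scalar does not in general make a semi-simple object algebraic, so one must twist each irreducible summand separately so that each has finite-order determinant — which is exactly what the paper does and which is harmless for the ranks in question.
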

	\proof
	If we replace $\calE_0^{\dagger}$ with its semi-simplification with respect to a Jordan--H\"older filtration, we do not change the reductive rank of $G(\mathcal E^{\dagger})$ and $G(\mathcal E)$. Thus we may and do assume that $\calE_0^{\dagger}$ is semi-simple. This implies that $\mathcal E^{\dagger}$ is semi-simple as well.
By \cite[Corollary 3.4.2]{Dad}, we can find an overconvergent $F$-isocrystal $\calF_0^\dagger$ over $X_0$ which is a direct sum of irreducible overconvergent $F$-isocrystals with finite order determinant and such that $\calE^\dagger\simeq \calF^\dagger$. Thanks to [\textit{ibid.}, Theorem 3.6.6], we know that $\calF_0^{\dagger}$ is algebraic. Therefore, up to replacing $\calE_0^\dagger$ with $\calF_0^\dagger$, we may assume that $\calE_0^\dagger$ is algebraic.
	
	 Let $V_0^\dagger$ (resp. $V_0$) be the representation of $G(\calE^{\dagger}_0)$ (resp. $G(\calE_0)$) associated to $\calE_0^{\dagger}$ (resp. $\calE_0$). Choose a set of generators $\chi_{1,0},\dots,\chi_{n,0}$ of $X^*(G(\calE_0)^{\cst})$. These correspond to constant $F$-isocrystals over $X_0$ of rank $1$. Since every constant $F$-isocrystal comes from an overconvergent $F$-isocrystal, every $\chi_{i,0}$ extends to a character $\chi_{i,0}^{\dagger}$ of $\pi_1^{\Foi}(X_0)$, the Tannakian group of $\Foi(X_0)$. Note that, a priori, these characters do not factor through $G(\calE^\dagger_0)^{\mathrm{cst}}$ (see Remark \ref{dont-factor:r}). Take $$\widetilde{V}^{\dagger}_0:=V_0^{\dagger}\oplus \bigoplus_{i=1}^n \chi_{i,0}^{\dagger}$$ and write $\widetilde{V}_0$ for the induced representation of $\pi_1^{\Fisoc}(X_0)$, the Tannakian group of $\Fisoc(X_0)$. By construction, the groups of constant characters $X^*(G(\widetilde{V}_0)^{\cst})$ and $X^*(G(V_0)^{\cst})$ are canonically isomorphic and generated by the $\chi_{i,0}$. Moreover, since $\widetilde{V}^{\dagger}\simeq V^{\dagger}\oplus \Qpbar^{\oplus n}$ and  $\widetilde{V}\simeq V\oplus \Qpbar^{\oplus n}$, we get isomorphisms $G(\widetilde{V}^{\dagger})\simeq G(\calE^{\dagger})$ and $G(\widetilde{V})\simeq G(\calE)$. Therefore, it is enough to show that $\rk(G(\widetilde{V}^{\dagger}))=\rk(G(\widetilde{V}))$. 
	
	By Proposition \ref{conv-oconv-mon:p}, there exists a commutative diagram with exact rows
	\begin{center}
		\begin{tikzcd}
			0\arrow{r} & G(\widetilde{V})\arrow{r}\arrow[hook,d] & G(\widetilde{V}_0)\arrow{r}\arrow[hook,d] & G(\widetilde{V}_0)^{\cst}\arrow{r}\arrow[d, two heads] &0\\
			0\arrow{r} & G(\widetilde{V}^{\dagger})\arrow{r} & G(\widetilde{V}_0^{\dagger})\arrow{r} & G(\widetilde{V}_0^{\dagger})^{\cst}\arrow{r} & 0
		\end{tikzcd}
	\end{center}
where the first two vertical arrows are injective and the last one is surjective. As $\widetilde{V}_0^{\dagger}$ is still algebraic, by Corollary \ref{arit-rank:c}, $\rk(G(\widetilde{V}_0))=\rk(G(\widetilde{V}^{\dagger}_0))$.
	Since the reductive rank is additive in exact sequences, it is enough to show that $G(\widetilde{V}_0)^{\cst}$ and $G(\widetilde{V}^{\dagger}_0)^{\cst}$ have the same reductive rank. We will show that the morphism $\varphi: G(\widetilde{V}_0)^{\cst}\to G(\widetilde{V}_0^\dagger)^{\cst}$ of the previous diagram is actually an isomorphism. We already know that $\varphi$ is surjective. As $G(\widetilde{V}_0)^{\cst}$ and $G(\widetilde{V}_0^\dagger)^{\cst}$ are groups of multiplicative type by Corollary \ref{mult-type:c}, it remains to show that the map $\varphi^*:X^*(G(\widetilde{V}^{\dagger}_0)^{\cst})\to X^*(G(\widetilde{V}_0)^{\cst})$ is surjective. This is a consequence of the construction of $\widetilde{V}^{\dagger}_0$. Indeed, $X^*(G(\widetilde{V}_0)^{\cst})=X^*(G(V_0)^{\cst})$ is generated by $\chi_{1,0},\dots,\chi_{n,0}$ and for every $i$, the character $\chi_{i,0}^\dagger\in X^*(G(\widetilde{V}^{\dagger}_0)^{\cst})$ is sent by $\varphi^*$ to $\chi_{i,0}$. 
	\endproof
	\begin{coro}\label{red-rank-cst:c}
		Let $\calE_0^{\dagger}$ be an algebraic overconvergent $F$-isocrystal. The reductive rank of $G(\calE^{\dagger}_0)^{\cst}$ is the same as the one of $G(\calE_0)^{\cst}$.
	\end{coro}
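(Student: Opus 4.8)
The plan is to deduce the statement formally from the two rank equalities already at our disposal, namely Theorem \ref{rank:t} and Corollary \ref{arit-rank:c}, combined with the fundamental exact sequences of Proposition \ref{conv-oconv-mon:p} and the additivity of the reductive rank in short exact sequences. No new geometric input is required: this is a purely group-theoretic consequence of what has been proved.

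Concretely, I would argue as follows. By Proposition \ref{conv-oconv-mon:p} we have two short exact sequences
\[
0\to G(\calE)\to G(\calE_0)\to G(\calE_0)^{\cst}\to 0,\qquad
0\to G(\calE^{\dagger})\to G(\calE_0^{\dagger})\to G(\calE_0^{\dagger})^{\cst}\to 0.
\]
Since the reductive rank is additive along such sequences, this gives
\[
\rk(G(\calE_0)^{\cst})=\rk(G(\calE_0))-\rk(G(\calE)),\qquad
\rk(G(\calE_0^{\dagger})^{\cst})=\rk(G(\calE_0^{\dagger}))-\rk(G(\calE^{\dagger})).
\]
Now $\calE_0^{\dagger}$ is algebraic by hypothesis, so Corollary \ref{arit-rank:c} yields $\rk(G(\calE_0))=\rk(G(\calE_0^{\dagger}))$, while Theorem \ref{rank:t}, which holds for an arbitrary overconvergent $F$-isocrystal, yields $\rk(G(\calE))=\rk(G(\calE^{\dagger}))$. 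Subtracting the two identities above gives $\rk(G(\calE_0)^{\cst})=\rk(G(\calE_0^{\dagger})^{\cst})$, which is the claim.

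There is essentially no obstacle: the argument is formal once Theorem \ref{rank:t} and Corollary \ref{arit-rank:c} are in place. The one point worth flagging is the asymmetry in the hypotheses. Theorem \ref{rank:t} is unconditional, whereas the algebraicity assumption in the present statement enters only through Corollary \ref{arit-rank:c}, which rests on the existence of Frobenius tori of maximal dimension (Theorem \ref{frob-tori:t}) and hence on the theory of companions. As for the additivity of the reductive rank in a sequence $0\to N\to G\to Q\to 0$ of affine group schemes over a field of characteristic $0$, this is the same elementary fact already used in the proof of Theorem \ref{rank:t}: a maximal torus of $G$ meets $N$ in a maximal torus of $N$ and surjects onto a maximal torus of $Q$, so the dimensions add up.
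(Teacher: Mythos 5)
Your proposal is correct and coincides with the paper's own proof, which deduces the corollary in exactly this way from Corollary \ref{arit-rank:c} and Theorem \ref{rank:t}, using the exact sequences of Proposition \ref{conv-oconv-mon:p} and the additivity of the reductive rank. Nothing further is needed.
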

	\begin{proof}
		The result follows from Corollary \ref{arit-rank:c} and Theorem \ref{rank:t}, thanks to Proposition \ref{conv-oconv-mon:p} and the additivity of the reductive ranks with respect to exact sequences.
	\end{proof}
	\begin{rema}\label{sl3:r}Using Theorem \ref{kedlaya:t}, one can show that when $\calE^\dagger$ is semi-simple, the functor $\langle\calE^\dagger\rangle\to \langle\calE\rangle$ is fully faithful. Therefore, in this case, $G(\mathcal E)\subseteq G(\mathcal E^{\dagger})$ is an epimorphic subgroup (cf. Remark \ref{epimorphic:r}). Nevertheless, Theorem \ref{rank:t} does not follow directly from this, because epimorphic subgroups can have, in general, lower reductive rank. For example, let $\KK$ be any field and let $G$ be the algebraic group $\mathrm{SL}_{3,\KK}$. The subgroup $H$ of $G$ defined by the matrices of the form
		\begin{center}
			
			$\begin{pmatrix}
			a      & 0 & * \\
			0 & a & * \\
			0  & 0& a^{-2}
			\end{pmatrix},$
		\end{center}
		with $a\in \KK^{\times}$, is the radical of a maximal parabolic subgroup of $G$. Therefore, by \cite[§2.(a) and §2.(d)]{Borel1}, $H$ is an epimorphic subgroup of $G$. On the other hand, the reductive rank of $H$ is $1$. Even more surprisingly, in characteristic $0$ \textit{every} almost simple group contains an epimorphic subgroup of dimension $3$ [\textit{ibid.}, §5.(b)].
	\end{rema}
	Another consequence of Theorem \ref{rank:t} is the following result that we will not use, but which has its own interest. We have already discussed it in §\ref{weak:ss}.
	\begin{coro}\label{weakweak:c}
		Let $\calE_0^\dagger$ be a overconvergent $F$-isocrystal and assume that $\calE^\dagger$ is semi-simple. Every $\calF_0\in \langle\calE_0 \rangle$ which is an extension of constant $F$-isocrystals is constant.
	\end{coro}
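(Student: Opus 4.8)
The plan is to reinterpret the hypothesis through the Tannakian dictionary and then quote Theorem \ref{rank:t} together with Lemma \ref{coh-van:l}; the argument is short precisely because all the substance is already in those two statements.

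First I would recall that $\calF_0\in\langle\calE_0\rangle$ corresponds to a finite-dimensional representation $\rho_0\colon G(\calE_0)\to\GL(W)$, and that — by the definition of $G(\calE_0)^{\cst}$ and the exactness of the top row of Proposition \ref{conv-oconv-mon:p} — an object of $\langle\calE_0\rangle$ is constant if and only if the subgroup $G(\calE)=\ker\bigl(G(\calE_0)\to G(\calE_0)^{\cst}\bigr)$ acts trivially on the associated representation. Restricting $\rho_0$ along $G(\calE)\hookrightarrow G(\calE_0)$ recovers the $G(\calE)$-representation attached to the underlying convergent isocrystal $\calF\in\langle\calE\rangle$, and $\calF$ is trivial (i.e.\ $\calF_0$ is constant) exactly when that restricted representation is trivial.

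Next, writing the given extension as $0\to\calG_0'\to\calF_0\to\calG_0''\to 0$ with $\calG_0',\calG_0''$ constant, and applying the exact faithful forgetful functor to $\Isoc(X_0)$, I obtain a $G(\calE)$-stable subspace $W'\subseteq W$ (the image of $\calG_0'$) such that $G(\calE)$ acts trivially on $W'$ and on $W/W'$. Hence $\rho_0$ maps $G(\calE)$ into the commutative unipotent subgroup
\[
U:=\bigl\{\,g\in\GL(W)\ :\ (g-\id)W\subseteq W',\ (g-\id)|_{W'}=0\,\bigr\}\ \cong\ \Hom_{\Qpbar}(W/W',W').
\]
By Theorem \ref{rank:t}, $G(\calE)$ is a maximal-rank subgroup of $G(\calE^\dagger)$, and $G(\calE^\dagger)$ is reductive since $\calE^\dagger$ is semi-simple; so Lemma \ref{coh-van:l} (applied over $\KK=\Qpbar$) forces $G(\calE)\to U$ to be trivial. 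Therefore $G(\calE)$ acts trivially on $W$, i.e.\ $\calF_0$ is constant.

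I do not anticipate a real obstacle: the only point requiring care is the translation ``$\calF_0$ is an extension of two constant $F$-isocrystals $\Longleftrightarrow$ $G(\calE)$ lands in a unipotent subgroup of $\GL(W)$'', after which Theorem \ref{rank:t} and Lemma \ref{coh-van:l} close the argument immediately. Equivalently, one may phrase the last step as the vanishing of $\Ext^1_{G(\calE)}(\Qpbar,\Qpbar)=\Hom(G(\calE),\Ga)$, which splits $W$ as a $G(\calE)$-module into copies of the trivial representation and yields the same conclusion.
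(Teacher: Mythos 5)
Your proposal is correct and follows the paper's own argument: the paper likewise reduces the statement to the vanishing of $\Ext^1_{G(\calE)}(\Qpbar,\Qpbar)$ and then invokes Theorem \ref{rank:t} together with Lemma \ref{coh-van:l}. Your explicit unpacking of the Tannakian translation and of the unipotent subgroup $U$ is just a more detailed write-up of the same two-line proof.
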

	\begin{proof}
		The statement is equivalent to the fact that the group $\Ext^1_{G(\calE)}(\Qpbar,\Qpbar)$ vanishes. The result then follows from Theorem \ref{rank:t} thanks to Lemma \ref{coh-van:l}.
	\end{proof}
	
	\section{A special case of a conjecture of Kedlaya}\label{KC:s}
	
	\subsection{Proof of the main theorem}
	As a consequence of the results of §\ref{max-tori:ss}, we obtain a special case of the conjecture in \cite[Remark 5.14]{Ked16}. We shall start with a finiteness result. We retain the notation as in §\ref{fin-fields:nota}.
	\begin{prop}
		\label{constant-objects:p}
		If $\calE^{\dagger}_0$ is an irreducible overconvergent $F$-isocrystal over $X_0$ with finite order determinant, then $G(\calE_0)^{\cst}$ is finite. In particular, every constant subquotient of the $F$-isocrystal $\calE_0$ is finite (cf. §\ref{tannakian:ss}).
	\end{prop}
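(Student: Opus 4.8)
The plan is to deduce the statement purely formally from the rank results of \S\ref{max-tori:ss} together with class field theory, with no further geometric input.

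First I would observe that an irreducible overconvergent $F$-isocrystal with finite order determinant is algebraic (and even pure of weight $0$) by \cite[Theorem 3.6.6]{Dad}, so that all the results of \S\ref{max-tori:ss} apply to $\calE_0^{\dagger}$. Being irreducible, $\calE_0^{\dagger}$ is in particular semi-simple, so Corollary \ref{mult-type:c} shows that $G(\calE_0)^{\cst}$ is a group of multiplicative type. Since we work over an algebraically closed field of characteristic $0$, a group of multiplicative type is finite precisely when its maximal torus is trivial; hence it suffices to prove that $\rk\big(G(\calE_0)^{\cst}\big)=0$.

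To obtain this vanishing I would pass to the overconvergent side. By Proposition \ref{conv-oconv-mon:p} the bottom exact row identifies $G(\calE_0^{\dagger})^{\cst}$ with the quotient $G(\calE_0^{\dagger})/G(\calE^{\dagger})$, and this quotient is finite because $\calE_0^{\dagger}$ is irreducible with finite order determinant, as a consequence of class field theory (see \cite[Corollary 3.5.5]{Dad}). Thus $\rk\big(G(\calE_0^{\dagger})^{\cst}\big)=0$, and Corollary \ref{red-rank-cst:c}---whose proof is precisely where Theorem \ref{rank:t} enters---gives $\rk\big(G(\calE_0)^{\cst}\big)=0$ as well. Combined with the previous paragraph, this shows that $G(\calE_0)^{\cst}$ is finite.

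For the last assertion, let $\calF_0$ be a constant subquotient of the $F$-isocrystal $\calE_0$ in $\Fisoc(X_0)$. Since $\langle\calE_0\rangle$ is closed under subquotients and $\calF_0$ is constant, we have $\calF_0\in\langle\calE_0\rangle_{\cst}$; hence the inclusion of Tannakian subcategories $\langle\calF_0\rangle\hookrightarrow\langle\calE_0\rangle_{\cst}$ induces a surjection $G(\calE_0)^{\cst}\twoheadrightarrow G(\calF_0)$, so $G(\calF_0)$ is a quotient of a finite group scheme and is therefore finite, i.e.\ $\calF_0$ is finite. The argument is essentially mechanical once Theorem \ref{rank:t} is in hand; the only genuinely external ingredient is the class field theory statement that $G(\calE_0^{\dagger})/G(\calE^{\dagger})$ is finite for $\calE_0^{\dagger}$ irreducible with finite determinant, and I expect that invocation (rather than any new idea) to be the point requiring care.
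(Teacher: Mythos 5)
Your proposal is correct and follows essentially the same route as the paper: algebraicity via Deligne's conjecture, multiplicative type via Corollary \ref{mult-type:c}, equality of reductive ranks via Corollary \ref{red-rank-cst:c}, and finiteness of $G(\calE_0^{\dagger})^{\cst}$ from class field theory (the paper cites \cite[Proposition 3.4.7]{Dad} in the proof, matching the \cite[Corollary 3.5.5]{Dad} identification you make via Proposition \ref{conv-oconv-mon:p}). Your explicit treatment of the final assertion about constant subquotients is a welcome detail the paper leaves implicit.
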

	\proof
	We first notice that $\calE^\dagger_0$ is algebraic thanks to Deligne's conjecture, \cite[Lemma 4.1]{Abeesnault}. By Corollary \ref{mult-type:c}, we deduce that the algebraic groups $G(\calE_0)^\cst$ and $G(\calE_0^\dagger)^\cst$ are of multiplicative type and by Corollary \ref{red-rank-cst:c} that they have the same dimensions. Hence it is enough to show that $G(\calE_0^\dagger)^{\cst}$ is finite. Thanks to Proposition \ref{conv-oconv-mon:p}, this is the same as showing that $G(\calE^\dagger)$ is a finite index subgroup of $G(\calE_0^\dagger)$ or, equivalently, that $G(\calE^\dagger)^{\circ}=G(\calE_0^\dagger)^{\circ}$. By \cite[Corollary 3.4.5]{Dad}, $G(\calE^\dagger)^{\circ}$ is the derived subgroup of the reductive group $G(\calE_0^\dagger)^{\circ}$, so that it is enough to show that $G(\calE_0^\dagger)^{\circ}$ has finite centre. This follows from the fact that the natural faithful representation of $G(\calE_0^\dagger)$ is irreducible with finite order determinant.

	\endproof
	
	\begin{rema}\label{gmt:r}Apart from the known cases of Deligne's conjecture, Proposition \ref{constant-objects:p} ultimately relies on class field theory. Indeed, the key input for \cite[Corollary 3.4.5]{Dad} is the \textit{global monodromy theorem} for overconvergent $F$-isocrystals, proven by Crew (over curves) in \cite{CrewMon}. This theorem, as in the $\ell$-adic case, essentially follows from class field theory. 
		\end{rema}

	\begin{theo}\label{key:t}
		Let $\calE_0^{\dagger}$ be an irreducible overconvergent $F$-isocrystal over $X_0$. If $\calE_0$ admits a subobject of minimal slope $\calF_0\subseteq \calE_0$ with a non-zero morphism $\calF\to\mathbbm 1$, then $\calF=\calE$ and $\calE\simeq\mathbbm 1$.
	\end{theo}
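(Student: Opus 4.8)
The plan is to feed Proposition~\ref{constant-objects:p} into a slope count and then descend along $\epsilon$. First I would reduce to the case where $\det\calE_0^{\dagger}$ has finite order: since $\calE_0^{\dagger}$ is irreducible it is algebraic by Deligne's conjecture (\cite[Theorem 3.6.6]{Dad}), and after tensoring with a suitable rank-one overconvergent $F$-isocrystal we may assume $\det\calE_0^{\dagger}$ of finite order, exactly as in the proof of Theorem~\ref{rank:t}. Such a twist preserves irreducibility, carries $\calF_0$ to a subobject of minimal slope, and replaces the hypothesis $\Hom_{\Isoc}(\calF,\mathbbm 1)\neq 0$ by an equivalent statement (tensor back by the inverse rank-one isocrystal); the conclusion is likewise insensitive to it. To keep the exposition light I would also treat the case $\calF_0=\calE_0^{1}$ in detail first, the general case needing only a little more bookkeeping at the end.

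The crucial point is that the given morphism is Frobenius-compatible up to a root of unity. By Proposition~\ref{constant-objects:p} the group $G(\calE_0)^{\cst}$ is finite. The non-zero $\Qpbar$-space $M:=\Hom_{\Isoc}(\calF,\mathbbm 1)=H^0(X_0,\calF_0^{\vee})$ acquires, from the Frobenius structure of $\calF_0$, a $\Qpbar$-linear automorphism $\Phi$; since $M$ with $\Phi$ underlies the maximal constant $F$-subisocrystal of $\calF_0^{\vee}\in\langle\calE_0\rangle$, it defines an object of $\langle\calE_0\rangle_{\cst}$, and on such objects Frobenius acts through a fixed element of the finite group $G(\calE_0)^{\cst}$, so $\Phi$ has finite order. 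Choosing an eigenvalue $\zeta$ of $\Phi$, necessarily a root of unity, and a corresponding eigenvector $0\neq g\in M$, and unwinding the definitions, $g$ becomes a non-zero \emph{morphism of $F$-isocrystals} $\calF_0\to\calO_{X_0}^{(\zeta)}$. Now $\calO_{X_0}^{(\zeta)}$ is isoclinic of slope $v_q(\zeta)=0$ while $\calF_0$ is isoclinic of slope $a_1^{\eta}(\calE_0)$, and there are no non-zero morphisms of $F$-isocrystals between isoclinic objects of distinct slopes; hence $a_1^{\eta}(\calE_0)=0$.

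Next I would run the determinant count: the minimal generic slope of $\calE_0$ is $0$, so all its generic slopes are $\geq 0$, and their sum equals the slope of $\det\calE_0$, which is $0$ because $\det\calE_0$ has finite order; therefore every generic slope vanishes, $\calE_0$ is isoclinic of slope $0$, and $\calE_0^{1}=\calE_0$. In the case $\calF_0=\calE_0^{1}$ this gives $\calF_0=\calE_0$, so $g$ is a non-zero, hence surjective, morphism $\calE_0\twoheadrightarrow\calO_{X_0}^{(\zeta)}$ in $\Fisoc(X_0)$. As $\calO_{X_0}^{(\zeta)}$ is constant it is $\epsilon$ of an overconvergent lift $\calO_{X_0}^{\dagger,(\zeta)}$ (Remark~\ref{cst:r}), so the full faithfulness of $\epsilon$ identifies $\Hom_{\Foi}(\calE_0^{\dagger},\calO_{X_0}^{\dagger,(\zeta)})$ with $\Hom_{\Fisoc}(\calE_0,\calO_{X_0}^{(\zeta)})$, yielding a non-zero morphism $\calE_0^{\dagger}\to\calO_{X_0}^{\dagger,(\zeta)}$; since $\calE_0^{\dagger}$ is irreducible and the target is simple of rank one, this morphism is an isomorphism. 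Hence $\calE\simeq\mathbbm 1$, and as $\calF$ is a non-zero subobject of the simple object $\calE$ we get $\calF=\calE$. For an arbitrary minimal-slope $\calF_0$ the same analysis still shows $\calE_0$ is isoclinic of slope $0$, and one replaces $\calF_0$ by $\calE_0$ after a short argument exploiting this isoclinicity to turn the subquotient into a quotient.

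I expect the last step to be the main obstacle. A priori the essential image of $\epsilon$ is not stable under subquotients, so merely knowing that a rank-one constant object occurs as a \emph{subquotient} of $\calE_0$ in $\Fisoc(X_0)$ would be of no use. What rescues the argument is precisely that the slope computation — powered by Proposition~\ref{constant-objects:p}, hence ultimately by the maximal-torus Theorem~\ref{rank:t} — upgrades this to an honest quotient $\calE_0\twoheadrightarrow\calO_{X_0}^{(\zeta)}$, at which point the full faithfulness of $\epsilon$ together with the irreducibility of $\calE_0^{\dagger}$ closes the proof.
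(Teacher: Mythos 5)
Your reduction to the finite-order-determinant case and your slope computation are sound and essentially coincide with the paper's first half: the paper likewise feeds Proposition \ref{constant-objects:p} into the observation that $\calF_0$, being isoclinic with a non-zero constant quotient of finite monodromy, must be unit-root, whence $a_1^{\eta}(\calE_0)=0$, and the finite-order determinant then forces all generic slopes of $\calE_0$ to vanish. Your variant --- extracting a root-of-unity eigenvector of the (finite-order) Frobenius acting on $\Hom(\calF,\mathbbm 1)$ to produce a morphism of $F$-isocrystals $\calF_0\to\calO_{X_0}^{(\zeta)}$ and then comparing slopes --- is a legitimate repackaging of the same idea.

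The gap is in the endgame, exactly where you flag ``the main obstacle.'' Your argument is complete only when $\calF_0=\calE_0$. For a proper minimal-slope subobject $\calF_0\subsetneq\calE_0=\calE_0^{1}$ you must convert the constant rank-one \emph{subquotient} $\calO_{X_0}^{(\zeta)}$ of $\calE_0$ into an honest \emph{quotient} of $\calE_0$, and the ``short argument exploiting isoclinicity'' you invoke does not exist: $\Fisoc(X_0)$ is not semisimple, so a morphism out of a subobject need not extend. Concretely, under Crew's equivalence unit-root objects correspond to $p$-adic representations of $\pi_1^{\et}(X_0)$, and a subrepresentation of $V$ admitting a geometrically trivial quotient imposes no constraint whatsoever on the quotients of $V$ (think of a non-split unipotent extension containing a trivial subrepresentation). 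The paper closes this differently: once $\calE_0^{\dagger}$ is known to be unit-root, \cite[Theorem 3.9]{Ked16} upgrades $\langle\calE_0^{\dagger}\rangle\to\langle\calE_0\rangle$ from a fully faithful functor to an \emph{equivalence} of Tannakian categories, so the constant subquotient of $\calE_0$ transfers to a constant subquotient of $\calE_0^{\dagger}$, which by irreducibility is all of $\calE_0^{\dagger}$, hence of rank one. That unit-root equivalence (Crew--Tsuzuki) is the ingredient your proof is missing; the full faithfulness of $\epsilon$ alone, which only controls subobjects and quotients lying in the essential image, cannot handle a subquotient.
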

	
	\begin{proof}
		Observe that both the hypothesis and the conclusion are invariant under twist. Thus, by \cite[Lemma 6.1]{Abe2}, we may assume that the determinant of $\calE_0^{\dagger}$ is of finite order, hence unit-root. We first prove that $\calE_0^{\dagger}$ is unit-root as well. If $r$ is the rank of $\calE_0^{\dagger}$, since 
		$$\sum_{i=1}^r a_i^{\eta}(\calE_0^\dagger)=a_1^\eta(\det(\calE_0^{\dagger}))=0 \quad \text{and}\quad a_1^{\eta}(\calE_0^\dagger)\leq \dots \leq a_r^{\eta}(\calE_0^\dagger),$$ it suffices to show that $a_1^{\eta}(\calE_0^\dagger)=0$. Let $\calF\twoheadrightarrow \calT$ be the maximal trivial quotient of $\calF$. Since  $\calF\twoheadrightarrow \calT$ is maximal, it is preserved by the action of $F$, hence it descends to a quotient $\calF_0\twoheadrightarrow \calT_0$, where $\calT_0$ is a constant $F$-isocrystal. The overconvergent $F$-isocrystal $\calE_0^{\dagger}$ satisfies the assumptions of Proposition \ref{constant-objects:p}, hence $G(\calT_0)$ is finite. As the $F$-isocrystal $\calF_0$ is isoclinic and it admits a non-zero quotient which is finite, it is unit-root. This implies that $a_1^{\eta}(\calE_0^{\dagger})=0$, as we wanted.

		We now prove that $\calE_0^\dagger$ has rank 1. Since $\mathcal E_0^{\dagger}$ is unit-root, as a consequence of a theorem of Tsuzuki, \cite[Theorem 3.9]{Ked16}, the functor $\langle \calE_0^\dagger \rangle\to \langle \calE_0 \rangle$ is an equivalence of categories. Therefore, if $\calE_0$ has a constant subquotient, the same is true for $\calE_0^{\dagger}$. But $\calE_0^{\dagger}$ is irreducible by assumption, thus it has to be itself a constant $F$-isocrystal. By Remark \ref{cst:r},  $\calE_0^{\dagger}$ corresponds to an irreducible $\Qpbar$-linear representation of $\Z$. Therefore, since $\Qpbar$ is algebraically closed, $\calE_0^\dagger$ has rank 1, as we wanted.
	\end{proof}
	\begin{rema}\label{Relationships:r}
		The statement of Theorem \ref{key:t} is false in general if we do not assume that $\calF_0\subseteq \calE_0$ is of minimal slope. A counterexample was provided by the second named author, \cite[Example 5.15]{Ked16}. A modified version of this conjecture has been proven by Tsuzuki in \cite{Tsuzuki} (over finite field and for curves over perfect fields). In his proof he performs a geometric ($p$-adic) analysis on the behaviour of the $F$-isocrystals both at the level of the completion along closed points, for example [\textit{ibid.}, Theorem 2.14], and at a global level, [\textit{ibid.}, Theorem 3.27]. In our proof, instead, we exploit an “arithmetic rigidity” specific to the situation over finite fields. This rigidity is expressed thanks to \cite[Lemma 6.1]{Abe2}, which mainly uses class field theory, and the existence of Frobenius tori of maximal dimension, which cannot happen over more general perfect fields.
		
	\end{rema}
	\subsection{Some consequences}
	
	\begin{coro}\label{pure-geom-surj:c}
		Let $\calE_0^{\dagger}$ be an overconvergent $F$-isocrystal over $X_0$ and let $\calF_0$ be a subobject of $\calE_0$ of minimal generic slope. If $\mathcal E^{\dagger}$ is semi-simple, then the restriction morphism $\Hom(\calE, \mathbbm 1)\to \Hom(\calF, \mathbbm 1)$ is surjective.
	\end{coro}
	\begin{proof}
		As $\mathcal E^{\dagger}$ is semi-simple if we replace $\calE_0^{\dagger}$ with its semi-simplification with respect to a Jordan--H\"older filtration, we do not change the isomorphism class of $\calE^{\dagger}$. Thus we may and do assume that $\calE_0^{\dagger}$ is semi-simple.
		The proof is then an induction on the number $n$ of summands of any decomposition of $\calE_0^{\dagger}$ in irreducible overconvergent $F$-isocrystals. If $n=1$ this is an immediate consequence of Theorem \ref{key:t}. Suppose now that the result is known for every positive integer $m<n$. Take an irreducible subobject $\mathcal G_0^{\dagger}$ of $\mathcal E^{\dagger}_0$, write $\mathcal H_0:=\mathcal G_0\times_{\mathcal E_0} \mathcal F_0$ and consider the following commutative diagram with exact rows and injective vertical arrows
		\begin{center}
			\begin{tikzcd}
				0\arrow{r} & \mathcal H\arrow{r}\arrow[d,hook] & \mathcal F\arrow[d,hook]\arrow{r} & \mathcal F/\mathcal H\arrow[d,hook]\arrow{r} & 0\\
				0\arrow{r} & \mathcal G \arrow{r} & \mathcal E \arrow{r} & \mathcal{E}/\mathcal G\arrow{r} & 0.
			\end{tikzcd}
		\end{center}
		
		As $\calE_0^{\dagger}$ is semi-simple, the quotient $\calE_0^{\dagger}\twoheadrightarrow\calE_0^{\dagger}/\calG_0^{\dagger}$ admits a splitting. This implies that the lower exact sequence splits. We apply the functor $\Hom(-,\mathbbm 1)$ and we get the following commutative diagram with exact rows
		\begin{center}
			\begin{tikzcd}
				0\arrow{r} & \Hom(\mathcal E/\mathcal G,\mathbbm 1)\arrow{r}\arrow{d} & \Hom(\mathcal E,\mathbbm 1)\arrow{d}\arrow{r} & \Hom(\mathcal G,\mathbbm 1)\arrow{d}\arrow{r} & 0\\
				0\arrow{r} & \Hom(\mathcal F/\mathcal H,\mathbbm 1) \arrow{r} & \Hom(\mathcal F,\mathbbm 1) \arrow{r} & \Hom(\mathcal H,\mathbbm 1).
			\end{tikzcd}
		\end{center}
		Since $\mathcal H_0$ and $\mathcal F_0/\mathcal H_0$ are subobjects of minimal slope of $\mathcal G_0$ and $\mathcal E_0/\mathcal G_0$ respectively, by the inductive hypothesis the left and the right vertical arrows are surjective. By diagram chasing, this implies that the central vertical arrow is also surjective, as we wanted.
	\end{proof}
	
	\begin{rema}By the theory of weights, if $\calE_0^\dagger$ is pure then $\calE^\dagger$ is semi-simple. Hence one can apply Corollary \ref{pure-geom-surj:c} in this situation. The theorem is false instead without the assumption that $\mathcal E^{\dagger}$ is semi-simple. For example, when $X_0=\mathbb{G}_{m,\Fq}$, there exits a non-trivial extension $$0\to\mathbbm 1^{\dagger}_0\to \calE_0^{\dagger}\to (\mathbbm 1^{\dagger}_0)^{(q)}\to 0,$$ which does not split in $\oi(X_0)$. If $\calF_0 \subseteq \calE_0$ is the rank $1$ trivial subobject of $\calE_0$, then the map $\Hom(\calE, \mathbbm 1)\to \Hom(\calF, \mathbbm 1)$ is the zero map, even though $\Hom(\calF, \mathbbm 1)=\Qpbar$.
	\end{rema}
	We end the section presenting a variant of Corollary \ref{pure-geom-surj:c}, where we rather consider morphisms in $\Fisoc(X_0)$.
	\begin{coro}
		Let $\calE_0^{\dagger}$ be an algebraic semi-simple overconvergent $F$-isocrystal with constant Newton polygons and with minimal slope equal to $0$. Let $\calE^1_0\subseteq \mathcal E_0$ be the maximal unit-root subobject of $\mathcal E_0$. The restriction morphism $\Hom(\calE_0, \mathbbm 1_0)\to \Hom(\calE^1_0, \mathbbm 1_0)$ is an isomorphism.
	\end{coro}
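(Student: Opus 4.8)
The plan is to deduce this from Corollary \ref{pure-geom-surj:t} by comparing morphisms in $\oi(X_0)$ with morphisms in $\Foi(X_0)$ via Frobenius-equivariance. The key observation is that a morphism of convergent isocrystals $\calE\to\mathbbm 1$ underlies a morphism of $F$-isocrystals $\calE_0\to\mathbbm 1_0$ precisely when it is compatible with the Frobenius structures; since the target $\mathbbm 1_0$ is the unit object, the space $\Hom(\calE_0,\mathbbm 1_0)$ is the Frobenius-invariant part of $\Hom(\calE,\mathbbm 1)$, where Frobenius acts via the Frobenius structure $\Phi$ on $\calE$. The same holds for $\calE^1_0\subseteq\calE_0$, the convergent subobject of minimal (hence, by hypothesis, zero) slope, which is itself an $F$-isocrystal because slope filtrations of convergent $F$-isocrystals are Frobenius-stable. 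So the statement reduces to showing that the Frobenius-equivariant restriction map $\Hom(\calE,\mathbbm 1)\to\Hom(\calE^1,\mathbbm 1)$, which is surjective by Corollary \ref{pure-geom-surj:t} (applicable since $\calE^\dagger$ is semi-simple, as $\calE_0^\dagger$ is algebraic and semi-simple, cf. the theory of weights), induces an isomorphism on Frobenius-invariants.

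First I would make the slope bookkeeping explicit: any nonzero $\varphi\colon\calF\to\mathbbm 1$ of convergent isocrystals forces the minimal slope $a_1^\eta(\calE_0)$ to be $\geq 0$, and combined with the hypothesis that the minimal slope equals $0$ we get that $\calE^1_0=\calF_0$ is unit-root. Then I would invoke the key structural input: because $\calE^1_0$ is the unit-root part and $\calE_0/\calE^1_0$ has all slopes $>0$, there are no nonzero maps from $\calE_0/\calE^1_0$ to any unit-root object, and in particular $\Hom(\calE_0/\calE^1_0,\mathbbm 1_0)=0$ and even $\Hom(\calE/\calE^1,\mathbbm 1)^{F}=0$. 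This gives injectivity of the restriction on the level of $F$-equivariant maps: an $F$-isocrystal morphism $\calE_0\to\mathbbm 1_0$ vanishing on $\calE^1_0$ factors through $\calE_0/\calE^1_0$, hence is zero.

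For surjectivity, given $\psi\in\Hom(\calE^1_0,\mathbbm 1_0)$, view it as a Frobenius-invariant element of $\Hom(\calE^1,\mathbbm 1)$; by Corollary \ref{pure-geom-surj:t} it lifts to some $\tilde\psi\in\Hom(\calE,\mathbbm 1)$, but $\tilde\psi$ need not be Frobenius-invariant a priori. \textbf{This is the main obstacle.} I would resolve it by a weight or semisimplicity argument: $\Hom(\calE,\mathbbm 1)$ carries a Frobenius action (an automorphism, since $\Phi$ is an isomorphism), the restriction map to $\Hom(\calE^1,\mathbbm 1)$ is Frobenius-equivariant and surjective, and its kernel $\Hom(\calE/\calE^1,\mathbbm 1)$ is, by the slope estimate, a Frobenius-module all of whose eigenvalues have positive valuation — so $1$ is not an eigenvalue of Frobenius on the kernel. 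Hence taking Frobenius-invariants (equivalently, the generalized eigenspace for eigenvalue $1$) is exact on this short sequence, giving $\Hom(\calE,\mathbbm 1)^F\twoheadrightarrow\Hom(\calE^1,\mathbbm 1)^F$, i.e. $\Hom(\calE_0,\mathbbm 1_0)\twoheadrightarrow\Hom(\calE^1_0,\mathbbm 1_0)$. Together with the injectivity above, the restriction map is an isomorphism. The one technical point to verify carefully is that the Frobenius eigenvalues on $\Hom(\calE/\calE^1,\mathbbm 1)$ indeed avoid $1$: this follows from the fact that $\calE/\calE^1$ has all generic slopes strictly positive, so after extending scalars and passing to a closed point, the Frobenius eigenvalues on any $F$-stable quotient have strictly positive $p$-adic valuation, while the eigenvalues contributing a nonzero map to $\mathbbm 1_0$ would need valuation $0$.
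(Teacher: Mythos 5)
Your proof is correct and follows the same skeleton as the paper's: identify $\Hom(\calE_0,\mathbbm 1_0)$ with $\Hom(\calE,\mathbbm 1)^F$, get surjectivity at the isocrystal level from Corollary \ref{pure-geom-surj:t}, and get injectivity from the strict positivity of the slopes of $\calE_0/\calE_0^1$. The one step where you genuinely diverge is the passage from surjectivity of $\Hom(\calE,\mathbbm 1)\to\Hom(\calE^1,\mathbbm 1)$ to surjectivity on $F$-invariants. The paper invokes Corollary \ref{mult-type:c}: the group $G(\calE_0)^{\cst}$ is reductive, so $F$ acts semi-simply on both Hom spaces and invariants are exact. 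You instead observe that the kernel of the restriction map is $\Hom(\calE/\calE^1,\mathbbm 1)$, on which $F-1$ is bijective because $1$ is not an eigenvalue (equivalently, $\Hom(\calE_0/\calE_0^1,\mathbbm 1_0)=0$ by the slope constraint), so the $F$-coinvariants of the kernel vanish and invariants remain exact. Your variant is slightly more elementary in that it reuses the same slope input that already gives injectivity and does not need the reductivity of $G(\calE_0)^{\cst}$; the paper's variant is more uniform with the rest of Section \ref{monodromy:s}. Two cosmetic points: the semi-simplicity of $\calE^\dagger$ follows directly from that of $\calE_0^\dagger$ (restriction of a semi-simple representation to a normal Tannakian subgroup, as in Proposition \ref{conv-oconv-mon:p}), with no need to appeal to weights or purity; and for the exactness argument you only need that $1$ is not an eigenvalue of $F$ on the kernel, which is exactly the vanishing of $\Hom(\calE_0/\calE_0^1,\mathbbm 1_0)$ — the stronger assertion about all eigenvalues having positive valuation, while true, is not needed.
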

	\begin{proof}
		Since $\mathcal E^{\dagger}_0$ is semi-simple, the overconvergent isocrystal $\mathcal E^{\dagger}$ is semi-simple as well. By Corollary \ref{pure-geom-surj:c}, the restriction morphism $\Hom(\calE, \mathbbm 1)\to \Hom(\calE^1, \mathbbm 1)$ is then surjective. The vector space $\Hom(\calE, \mathbbm 1)$ endowed with its Frobenius action corresponds, via the equivalence in Remark \ref{cst:r}, to the maximal constant subobject of $\calE_{0}^\vee$. Since constant $F$-isocrystals are overconvergent, the latter is the same as the maximal constant subobject of $(\mathcal E^{\dagger}_0)^\vee$. Combining this with the semi-simplicity assumption on $\mathcal E^{\dagger}_0$, we deduce that the action of $F$ on $\Hom(\calE, \mathbbm 1)$ is diagonalizable. Therefore, the restriction morphism
		$$\Hom(\calE_0, \mathbbm 1_0)=\Hom(\calE, \mathbbm 1)^{F}\to \Hom(\calE^1, \mathbbm 1)^{F}=\Hom(\calE^1_0, \mathbbm 1_0)$$ is surjective as well. To prove that it is injective, we note that a morphism from $\calE_0$ to $\mathbbm 1_0$ which factors through $\mathcal E_0/\mathcal E^1_0$ is the zero morphism because $\mathcal E_0/\mathcal E^1_0$ has positive slopes while $\mathbbm 1_0$ has slope $0$. This concludes the proof.
	\end{proof}

	\section{An extension of the theorem of Lang--Néron}
	\label{perfect-p-tor:s}
	
	\subsection{\texorpdfstring{$p$}{}-torsion and \texorpdfstring{$p$}{}-divisible groups}
	We exploit here Corollary \ref{pure-geom-surj:c} to prove the following result on the torsion points of abelian varieties. Let $\F\subseteq k$ be a finitely generated field extension and let $k^{\perf}$ be a perfect closure of $k$. Recall that for an abelian variety $A$ over $k$ we write $\Tr_{k/\F}(A)$ for its $k/\F$-trace (cf. \cite[§6]{Conrad}).
	\begin{theo}\label{perfect-p-tor:t}
			If $A$ is an abelian variety over $k$ such that $\Tr_{k/\F}(A)=0$, then the group $A(k^{\mathrm{perf}})_{\mathrm{tors}}$ is finite.
	\end{theo}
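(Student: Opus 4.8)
The plan is to argue by contradiction, using crystalline Dieudonné theory to convert hypothetical extra $p^{\infty}$-torsion over $k^{\perf}$ into a constant quotient of a convergent $F$-isocrystal of geometric origin, extending that quotient to the whole $F$-isocrystal by the monodromy results of Section \ref{KC:s}, and deriving a contradiction with Theorem \ref{Lang-Neron:t}. First I would reduce to the $p$-primary part: for $\ell\neq p$ the group $A[\ell^{\infty}]$ is étale, so $A[\ell^{\infty}](k^{\perf})=A[\ell^{\infty}](k)\subseteq A(k)_{\tors}$, which is finite because $A(k)$ is finitely generated by Theorem \ref{Lang-Neron:t}, and only finitely many $\ell$ contribute. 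Hence it suffices to prove that $A[p^{\infty}](k^{\perf})$ is finite, and we assume for contradiction that it is infinite.

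Over the perfect field $k^{\perf}$ the connected part of $A[p^{\infty}]$ has no nonzero points, so $A[p^{\infty}]^{\et}(k^{\perf})$ is infinite; equivalently, the group of $\mathrm{Gal}(\overline{k}/k^{\perf})$-invariants of $T_p(A[p^{\infty}]^{\et})\otimes \Qp/\Zp$ is infinite. Being a cofinitely generated $\Zp$-module, it then contains a copy of $\Qp/\Zp$ on which Galois acts trivially, i.e. a monomorphism of $p$-divisible groups $(\Qp/\Zp)_{k^{\perf}}\hookrightarrow A[p^{\infty}]^{\et}$. Since $k^{\perf}/k$ is purely inseparable and the formation of the maximal étale quotient of $A[p^{\infty}]$ commutes with this base change, this monomorphism is already defined over $k$, so we obtain $(\Qp/\Zp)_k\hookrightarrow A[p^{\infty}]^{\et}$. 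The étale part descends for free; the real content is to lift this inclusion through $A[p^{\infty}]\twoheadrightarrow A[p^{\infty}]^{\et}$ over $k$ (over $k^{\perf}$ such a lift exists trivially since the connected–étale sequence splits, which is exactly the source of the Frobenius-twist phenomenon).

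To bring in isocrystals, write $k=k_0\cdot\F$ with $k_0$ the function field of a smooth geometrically connected variety $X_0$ over a finite field $\F_{q'}\supseteq\Fq$; after enlarging $\F_{q'}$ and shrinking $X_0$, choose an abelian scheme $f_0\colon\mathfrak A_0\to X_0$ with $A_0\times_{k_0}k\cong A$. A further shrinking makes $\mathfrak A_0[p^{\infty}]$ of constant Newton polygon, hence equipped with an honest connected–étale sequence over $X_0$; and after possibly enlarging $\F_{q'}$ once more (a colimit argument, since the inclusion of the previous step is a priori only equivariant for the geometric fundamental group) the monomorphism extends to $(\Qp/\Zp)_{X_0}\hookrightarrow \mathfrak A_0[p^{\infty}]^{\et}$ over $X_0$. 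Set $\calE_0^{\dagger}:=R^1 f_{0,\mathrm{rig}*}\calO$; it is an overconvergent $F$-isocrystal, pure of weight $1$, whose underlying convergent $F$-isocrystal is $\calE_0=R^1 f_{0,\crys*}\calO=\mathbb{D}(\mathfrak A_0[p^{\infty}])$ for the contravariant crystalline Dieudonné functor $\mathbb{D}$. The connected–étale sequence gives, under $\mathbb{D}$, the unit-root subobject $\mathbb{D}(\mathfrak A_0[p^{\infty}]^{\et})\subseteq\calE_0$; as $\mathfrak A_0[p^{\infty}]^{\et}\neq 0$ this is nonzero, so $\calE_0$ has minimal slope $0$ and $\calF_0:=\calE_0^1=\mathbb{D}(\mathfrak A_0[p^{\infty}]^{\et})$ is its minimal-slope subobject. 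Applying $\mathbb{D}$ to the monomorphism above yields a surjection $\calF_0\twoheadrightarrow\mathbb{D}((\Qp/\Zp)_{X_0})\simeq\mathbbm 1_0$ in $\Fisoc(X_0)$, in particular a nonzero Frobenius-fixed element of $\Hom(\calF,\mathbbm 1)$.

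Since $\calE_0^{\dagger}$ is pure, $\calE^{\dagger}$ is semi-simple, so Corollary \ref{pure-geom-surj:t} gives that $\Hom(\calE,\mathbbm 1)\to\Hom(\calF,\mathbbm 1)$ is surjective; moreover $G(\calE_0)^{\cst}$ is of multiplicative type (Corollary \ref{mult-type:c}, using that $\calE_0^{\dagger}$ is algebraic and semi-simple), so Frobenius acts semi-simply on these $\Hom$-spaces and surjectivity persists on Frobenius-invariants (this is the Frobenius-equivariant refinement of Corollary \ref{pure-geom-surj:t} used at the end of Section \ref{KC:s}). Hence $\Hom_{\Fisoc(X_0)}(\calE_0,\mathbbm 1_0)\neq 0$, and any nonzero morphism $\calE_0\to\mathbbm 1_0$ is onto because $\mathbbm 1_0$ has rank $1$; thus $\mathbb{D}(\mathfrak A_0[p^{\infty}])\twoheadrightarrow\mathbb{D}((\Qp/\Zp)_{X_0})$. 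Reversing $\mathbb{D}$ (full faithfulness up to isogeny of the crystalline Dieudonné functor over the smooth $\F_{q'}$-scheme $X_0$, the kernel having slopes in $[0,1]$ hence being again a Dieudonné crystal) produces a monomorphism $(\Qp/\Zp)_{X_0}\hookrightarrow\mathfrak A_0[p^{\infty}]$ up to isogeny, hence an honest monomorphism $H\hookrightarrow\mathfrak A_0[p^{\infty}]$ with $H$ étale and geometrically isomorphic to $\Qp/\Zp$; over $k$ its rank-one Tate module has Galois action trivial after inverting $p$, hence trivial, so $H_k\simeq(\Qp/\Zp)_k$ and $(\Qp/\Zp)_k\hookrightarrow A[p^{\infty}]$. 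Then $A[p^{\infty}](k)\supseteq\Qp/\Zp$ is infinite, contradicting the finite generation of $A(k)$ in Theorem \ref{Lang-Neron:t}. I expect the main obstacle to be not the monodromy input — lifting the constant quotient from $\calE_0^1$ to $\calE_0$ is exactly the content of Corollary \ref{pure-geom-surj:t} (ultimately Theorem \ref{rank:t}) and of the special case Theorem \ref{key:t} — but the geometric bookkeeping of the "descent argument": producing a model over a finite field, controlling the connected–étale sequence of $\mathfrak A_0[p^{\infty}]$ in families, and making the passage between $p$-divisible groups and $F$-isocrystals (including the inverse direction of $\mathbb{D}$) fully rigorous.
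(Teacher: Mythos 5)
Your overall strategy (crystalline Dieudonn\'e theory to convert excess $p^{\infty}$-torsion into a trivial quotient of the unit-root part of $R^1f_{\crys*}\calO$, extension of that quotient to the whole isocrystal via Corollary \ref{pure-geom-surj:t}, contradiction with Theorem \ref{Lang-Neron:t}) is the paper's, and your endgame is fine. But there is a genuine gap at the spreading-out step, and it sits exactly at the heart of the problem. You produce a monomorphism $(\Qp/\Zp)\hookrightarrow A[p^{\infty}]^{\et}$ over $k=k_0\F$ and then assert that ``after possibly enlarging $\F_{q'}$'' it extends to $(\Qp/\Zp)_{X_0}\hookrightarrow\frakA_0[p^{\infty}]^{\et}$ over a model $X_0$ over a \emph{finite} field. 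No colimit argument gives this: the monomorphism is an infinite compatible system of torsion points $(P_i)_i$, equivalently a saturated rank-one $\pi_1(X)$-fixed submodule $L$ of the Tate module, and what you need is that the arithmetic Frobenius acts on $L$ through a finite quotient of $\hZ$. A priori it could act through an infinite-order character valued in $1+p\Zp$; then each $P_i$ is defined over some $k_0\F_{q_i'}$ but no single finite extension works, and $\Hom_{X_0\times\F_{q'}}((\Qp/\Zp),\frakA_0[p^{\infty}]^{\et})=0$ for every finite $q'$. Ruling out this Frobenius-twist phenomenon is essentially the content of the theorem, so it cannot be assumed. Your later steps genuinely use the map over $X_0$: you apply $\mathbb D$ to get a Frobenius-equivariant surjection $\calF_0\twoheadrightarrow\mathbbm 1_0$ in $\Fisoc(X_0)$, and then invoke the $F$-equivariant corollary at the end of Section \ref{KC:s}, which presupposes exactly such an equivariant morphism.

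The paper's proof is organized precisely to avoid this. It keeps the Frobenius structure only over $X=X_0\times_{\Fq}\F$, where the monomorphism does live (Lemma \ref{infinity:l}, Corollary \ref{infty-isoc:c}); it then \emph{forgets the Frobenius structure} and descends the resulting nonzero morphism $\calF_X\to\calO_X$ of plain isocrystals from $X$ to $X_0$ by a Katz-type argument, so that Corollary \ref{pure-geom-surj:t} is applied in $\Isoc(X_0)$ with no Frobenius structures in sight. The Frobenius structure over $X_0$ is recovered only at the end (Proposition \ref{trace-p-adic:p}) through the \emph{canonical} maximal trivial quotient $\calE\twoheadrightarrow\calT$, which descends to a quotient of $F$-isocrystals $\calE_0\twoheadrightarrow\calT_0$ by canonicity; base-changing back to $X$ and applying Dieudonn\'e--Manin over $\F$ splits off a unit-root summand $(\calO_X,\id)$ of $(\calT_0)_X$, which suffices to reverse $\mathbb D$ over $X$ and contradict Theorem \ref{Lang-Neron:t} over $k$. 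To repair your write-up you should replace the step over $X_0$ by this detour (or supply an argument of comparable strength); as written, the assertion that the inclusion of $p$-divisible groups descends to a finite field of constants is unsupported and is not a matter of ``geometric bookkeeping.''
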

	As we have already discussed in Remark \ref{intro-p-tor:r}, thanks to Theorem \ref{Lang-Neron:t} it is enough to show that $A[p^\infty](k^{\perf})$ is finite. 
	
	\begin{nota}\label{model:ss}
	We fix some notation for §\ref{perfect-p-tor:s}. Let $A$ be an abelian variety over $k$ (we do not ask that $\Tr_{k/\F}(A)=0$). Since $A/k$ is of finite type over $k$, we can choose a subfield $k_0\subseteq k$ which is finitely generated over $\F_p$ such that $k=\F k_0$ and such that there exists an abelian variety $A_0/k_0$ endowed with an isomorphism $A\simeq A_0\otimes_{k_0}k$. Let $\Fq$ be the algebraic closure of $\F_p$ in $k_0$. By spreading out, we can choose a smooth geometrically connected variety $X_0$ over $\Fq$ with $\Fq(X_0)\simeq k_0$ and an abelian scheme $f_0:\frakA_0\to X_0$ endowed with an isomorphism $\mathcal A\times_X \Fq(X_0)\simeq A_0$. Since the Newton polygons of  $f_0:\frakA_0\to X_0$ are constant on a dense open subset of $X_0$,after shrinking $X_0$ we may assume that Newton polygons $f_0:\frakA_0\to X_0$ are constant. We denote by $X$ and $\frakA$ the extension of scalars of $X_0$ and $\frakA_0$ from $\Fq$ to $\F$.
	\end{nota}

	\begin{lemm}\label{infinity:l}
		Let $A$ be an abelian variety over $k$ and let $\frakA_0/X_0$ be a model of $A$ as in §\ref{model:ss}. If $|A[p^{\infty}](k^{\mathrm{perf}})|=\infty$, then there exists an injective morphism $(\Qp/\Zp)_X\hookrightarrow\frakA[p^\infty]^{\mathrm{\acute{e}t}}$.
	\end{lemm}
	\begin{proof}
		We first prove that the group $A[p^{\infty}](k^{\mathrm{perf}})$ is isomorphic to $\frakA[p^\infty]^{\et}(X)$, showing thereby that $\frakA[p^\infty]^{\et}(X)$ is infinite as well.
		As $k^{\mathrm{perf}}$ is a perfect field, the map $$A[p^{\infty}](k^{\mathrm{perf}})\to A[p^{\infty}]^{\et}(k^{\mathrm{perf}})=A[p^{\infty}]^{\et}(k),$$ induced by the quotient $A[p^\infty]\twoheadrightarrow A[p^\infty]^\et$ is an isomorphism. In addition, since the Newton polygons of $f:\frakA\to X$ are constant, the $p$-ranks of the fibres of $f$ are all equal. Therefore, for every $i$, the constructible \'etale sheaf $\frakA[p^i]^{\et}$ over $X$ is locally constant. Since $X$ is normal, the restriction morphism $\frakA[p^\infty]^{\et}(X)\to A[p^\infty]^{\et}(k)$ induced by the inclusion of the generic point $\Spec(k)\hookrightarrow X$ is then an isomorphism. These two observations show that $A[p^{\infty}](k^{\mathrm{perf}})\simeq\frakA[p^\infty]^{\et}(X)$, as we wanted.
		
		Since $|\frakA[p^\infty]^{\et}(X)|=|A[p^{\infty}](k^{\mathrm{perf}})|=\infty$, a standard compactness argument shows that there exists a morphism $(\Qp/\Zp)_X\hookrightarrow\frakA[p^\infty]^{\et}$. For the reader convenience, we quickly recall it. We define a partition of $\frakA[p^\infty]^{\et}(X)$ in subsets $\{\Delta_i\}_{i\in \N}$ in the following way. Let  $\Delta_0:=\{0\}$ and for $i>0$, let $\Delta_i:=\frakA[p^i]^{\et}(X)\setminus \frakA[p^{i-1}]^{\et}(X)$. When $j\geq i$, the  multiplication by $p^{j-i}$ induces a map $\Delta_j\to \Delta_i$. These maps make $\{\Delta_i\}_{i\in \N}$ an inverse system. We claim that every $\Delta_i$ is non-empty. Suppose by contradiction that for some $N\in \N$, the set $\Delta_N$ is empty. By construction, for every $i\geq N$ the sets $\Delta_i$ are empty as well. Since every $\Delta_i$ is finite, this would imply that $\frakA[p^\infty]^{\et}(X)$ is also finite, which is a contradiction. As every $\Delta_i$ is non-empty, by Tychonoff's theorem, the projective limit $\varprojlim \Delta_i$ is non-empty. The choice of an element $(P_i)_{i\in \N}\in \varprojlim \Delta_i$ induces an injective map  $(\Qp/\Zp)_X\hookrightarrow\frakA[p^\infty]^{\et}$, given by the assignment $[1/p^i]\mapsto P_i$. This yields the desired result.
	\end{proof}
	
	\subsection{Reformulation with the crystalline Dieudonné theory}
	We restate the classical crystalline Dieudonné theory in our setup.
	\subsubsection{}
	
	\label{cdt:ss}
	
	Let $$\mathbb{D}:\{\textrm{$p$-divisible groups $/X$}\}\to \Fcrys(X/W(\F))$$ be the \textit{crystalline Dieudonné module (contravariant) functor}, where $\Fcrys(X/W(\F))$ is the category of coherent $\Zp$-linear $F$-crystals (cf. \cite[Définition 3.3.6]{BBM82}). By \cite[Main Theorem 1]{deJ95} the functor is fully faithful and, by \cite[Théorème 2.5.6.(ii)]{BBM82}, there is a canonical isomorphism $\mathbb{D}(\frakA[p^\infty])\simeq R^1f_{\crys*}\calO_{\frakA}$. 
	Extending the scalars to $\Qpbar$ and post-composing with the functor of Theorem \ref{Ogus:t}, we define a $\Qpbar$-linear fully faithful contravariant functor $$\mathbb{D}_{\Qpbar}:\{\textrm{$p$-divisible groups $/X$}\}_{\Qpbar}\to \Fisoc(X).$$
	The functor sends the trivial $p$-divisible group $(\Qp/\Zp)_{X,\Qpbar}$ to the ($\Qpbar$-linear) $F$-isocrystal $(\calO_X,\id)$ over $X$. By the classification of $p$-divisible groups in terms of Dieudonn\'e modules over perfect fields, if $X=\Spec(\F)$ the functor induces an equivalence $$\mathbb{D}_{\Qpbar}:\{\textrm{$p$-divisible groups $/\Spec(\F)$}\}_{\Qpbar}\iso \Fisoc_{[0,1]}(\Spec(\F)),$$
	where $\Fisoc_{[0,1]}(\Spec(\F))$ is the category of $F$-isocrystals with slopes between $0$ and $1$. Since $\mathbb{D}_{\Qpbar}$ is compatible with base change, this implies that for every $X$, the functor $\mathbb{D}_{\Qpbar}$ is exact, it preserves the heights/ranks and it sends étale $p$-divisible groups to unit-root $F$-isocrystals. 
\subsubsection{}\label{Ete:ss}
By \cite[Théorème 7]{Ete02}, the $F$-isocrystal $R^1f_{0,\crys*}\calO_{\frakA_0}$ over $X_0$ comes from an overconvergent $F$-isocrystal, which we denote by $\calE_0^\dagger$. This overconvergent $F$-isocrystal is semi-simple, as proven over curves in \cite[Theorem 1.2]{Pal15} and explained in general in the proof of \cite[Theorem 5.1.6]{D'Ad20}. Let $\calF_0$ be the maximal unit-root subobject of $\calE_0$ and let $(\calF_0)_X$ be the inverse image of $\calF_0$ to $X$, as an $F$-isocrystal. As usual, we also denote by $\calE$ and $\calF$ the isocrystals over $X_0$ obtained from $\calE_0$ and $\calF_0$ by forgetting the Frobenius structure.

	 By the discussion in §\ref{cdt:ss}, we have the following result.
	\begin{lemm}\label{identification:l}Let $A$ be an abelian variety over $k$ and let $\frakA_0/X_0$ be a model of $A$ as in §\ref{model:ss}. The quotient $\frakA[p^\infty]\twoheadrightarrow \frakA[p^\infty]^{\et}$ is sent by $\mathbb{D}_{\Qpbar}$ to the natural inclusion $(\calF_0)_X\hookrightarrow (\calE_0)_X.$
	\end{lemm}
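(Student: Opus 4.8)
The plan is to reduce everything to the established properties of the functor $\mathbb{D}_{\Qpbar}$ recalled in §\ref{cdt:ss}, together with a local-to-global comparison along the generic point. First I would record what the source morphism is: the canonical connected-étale sequence $0\to\frakA[p^\infty]^{\circ}\to\frakA[p^\infty]\to\frakA[p^\infty]^{\et}\to 0$ of $p$-divisible groups over $X$, which makes sense because the Newton polygons of $f$ are constant, so $\frakA[p^\infty]^{\et}$ is a genuine $p$-divisible group (locally constant étale) over all of $X$. Applying the exact contravariant functor $\mathbb{D}_{\Qpbar}$ turns the surjection $\frakA[p^\infty]\twoheadrightarrow\frakA[p^\infty]^{\et}$ into an injection $\mathbb{D}_{\Qpbar}(\frakA[p^\infty]^{\et})\hookrightarrow\mathbb{D}_{\Qpbar}(\frakA[p^\infty])$ in $\Fisoc(X)$, and by the identification $\mathbb{D}(\frakA[p^\infty])\simeq R^1f_{\crys*}\calO_{\frakA}$ the target is $(\calE_0)_X$.

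Next I would identify the image. Since $\mathbb{D}_{\Qpbar}$ sends étale $p$-divisible groups to unit-root $F$-isocrystals, the subobject $\mathbb{D}_{\Qpbar}(\frakA[p^\infty]^{\et})\subseteq(\calE_0)_X$ is unit-root, hence contained in the maximal unit-root subobject $(\calF_0)_X$. For the reverse inclusion I would check equality of ranks: by §\ref{cdt:ss} the functor $\mathbb{D}_{\Qpbar}$ preserves heights/ranks, so $\rk\,\mathbb{D}_{\Qpbar}(\frakA[p^\infty]^{\et})$ equals the height of $\frakA[p^\infty]^{\et}$, which at each point is exactly the multiplicity of slope $0$ in the Newton polygon of $\frakA[p^\infty]$, i.e. the multiplicity of slope $0$ in $(\calE_0)_X$, which is $\rk\,(\calF_0)_X$ (here the constancy of the Newton polygon guarantees $\calF_0$ really is a subobject of the right rank, by \cite[Prop. 1.2.7 and Prop. 1.2.8]{Ked18}). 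Since $\mathbb{D}_{\Qpbar}(\frakA[p^\infty]^{\et})$ is a subobject of the isoclinic $(\calF_0)_X$ of full rank, the inclusion is an equality. This identifies the image of the injection with $(\calF_0)_X$ and shows the map is the natural inclusion $(\calF_0)_X\hookrightarrow(\calE_0)_X$.

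The only genuine point to be careful about — and the step I expect to require the most attention — is that the maximal unit-root subobject $\calF_0$ of $\calE_0$ (defined over $X_0$) base-changes correctly to $X$ and matches $\mathbb{D}_{\Qpbar}(\frakA[p^\infty]^{\et})$ on the nose rather than merely after shrinking $X_0$: this needs the slope filtration to exist over all of $X$ (not just generically), which is exactly where the hypothesis of constant Newton polygons in Notation \ref{model:ss} is used, and one should invoke the compatibility of the slope filtration with the étale/connected decomposition of the $p$-divisible group (e.g. via Katz's theory, or by comparing over the strict henselization / over $\Spec(\F)$ where $\mathbb{D}_{\Qpbar}$ is an equivalence onto $\Fisoc_{[0,1]}$). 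Once the unit-root part is pinned down pointwise and the two subobjects agree at, say, the generic point and have the same rank everywhere, uniqueness of the maximal unit-root subobject forces the identification globally, and the lemma follows.
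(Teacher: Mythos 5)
Your argument is correct and is precisely the one the paper intends: the paper gives no separate proof of Lemma \ref{identification:l} beyond the phrase ``by the discussion in §\ref{cdt:ss}'', and your proposal simply spells out that discussion (exactness of $\mathbb{D}_{\Qpbar}$ applied to the connected--étale sequence, étale goes to unit-root, and the rank count using constancy of the Newton polygons to identify the image with the maximal unit-root subobject $(\calF_0)_X$). No gaps; the point you flag about the slope filtration existing over all of $X$ is exactly why Notation \ref{model:ss} imposes constant Newton polygons.
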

By using Lemma \ref{identification:l}, we can reformulate Lemma \ref{infinity:l} in the language of $F$-isocrystals.
	\begin{coro}\label{infty-isoc:c}
		If $|A[p^{\infty}](k^{\mathrm{perf}})|=\infty$, then there exists a quotient  $(\calF_0)_X\twoheadrightarrow (\calO_X,\id)$.
	\end{coro}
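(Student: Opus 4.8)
The plan is to transport the embedding produced by Lemma \ref{infinity:l} through the crystalline Dieudonn\'e functor. First I would assume $|A[p^{\infty}](k^{\mathrm{perf}})|=\infty$, so that Lemma \ref{infinity:l} furnishes an injective morphism $\iota\colon(\Qp/\Zp)_X\hookrightarrow\frakA[p^\infty]^{\et}$ of \'etale $p$-divisible groups over $X$. Viewing $\iota$ in the $\Qpbar$-linear isogeny category $\{\textrm{$p$-divisible groups}/X\}_{\Qpbar}$, it is still a monomorphism.

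Next I would apply $\mathbb{D}_{\Qpbar}$. Recall from \S\ref{cdt:ss} that $\mathbb{D}_{\Qpbar}$ is $\Qpbar$-linear, contravariant, fully faithful and exact; hence it carries the monomorphism $\iota$ to an epimorphism
$$\mathbb{D}_{\Qpbar}(\iota)\colon \mathbb{D}_{\Qpbar}\big(\frakA[p^\infty]^{\et}\big)\twoheadrightarrow \mathbb{D}_{\Qpbar}\big((\Qp/\Zp)_X\big)$$
in $\Fisoc(X)$.

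It then remains to identify source and target. By \S\ref{cdt:ss}, the functor $\mathbb{D}_{\Qpbar}$ sends the trivial $p$-divisible group $(\Qp/\Zp)_X$ to the $F$-isocrystal $(\calO_X,\id_{\calO_X})$. For the source, Lemma \ref{identification:l} asserts that $\mathbb{D}_{\Qpbar}$ takes the quotient $\frakA[p^\infty]\twoheadrightarrow\frakA[p^\infty]^{\et}$ to the inclusion $(\calF_0)_X\hookrightarrow(\calE_0)_X$; in particular $\mathbb{D}_{\Qpbar}(\frakA[p^\infty]^{\et})\simeq(\calF_0)_X$. Substituting these identifications, $\mathbb{D}_{\Qpbar}(\iota)$ becomes the sought-for surjection $(\calF_0)_X\twoheadrightarrow(\calO_X,\id_{\calO_X})$.

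This is essentially a formal consequence of the two preceding lemmas, so I do not expect a serious obstacle. The only points deserving care are (i) that the embedding of Lemma \ref{infinity:l} remains a monomorphism after passing to the isogeny category, which is immediate since isogenies of $p$-divisible groups form an abelian category in which injective morphisms are monomorphisms, and (ii) that the contravariant exactness of $\mathbb{D}_{\Qpbar}$ indeed converts this monomorphism into an epimorphism of $F$-isocrystals.
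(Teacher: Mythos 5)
Your argument is correct and is essentially the paper's own proof: invoke Lemma \ref{infinity:l} to get the monomorphism $(\Qp/\Zp)_X\hookrightarrow\frakA[p^\infty]^{\et}$, then apply the contravariant, exact, fully faithful functor $\mathbb{D}_{\Qpbar}$ and use Lemma \ref{identification:l} together with the normalization $\mathbb{D}_{\Qpbar}((\Qp/\Zp)_X)\simeq(\calO_X,\id_{\calO_X})$ to read off the quotient $(\calF_0)_X\twoheadrightarrow(\calO_X,\id_{\calO_X})$. The extra care you take in passing to the isogeny category and in converting the monomorphism to an epimorphism is exactly what the paper leaves implicit.
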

	
	\begin{proof}
	Thanks to Lemma \ref{infinity:l}, if $|A[p^{\infty}](k^{\mathrm{perf}})|=\infty$, then there exists an injective morphism $(\Qp/\Zp)_X\hookrightarrow\frakA[p^\infty]^{\et}$.
		By Lemma \ref{identification:l}, after we extend the scalars to $\Qpbar$, this morphism is sent by $\mathbb{D}_{\Qpbar}$ to a quotient $(\calF_0)_X\twoheadrightarrow (\calO_X,\id)$.
	\end{proof}    
	\subsection{End of the proof}
	We need to rephrase the finiteness of torsion points given by the theorem of Lang--Néron in terms of morphisms of isocrystals on $X_0$. This will lead finally to the proof of Theorem \ref{perfect-p-tor:t}. 
	\begin{prop}\label{trace-p-adic:p} Let $A$ be an abelian variety over $k$, let $\frakA_0/X_0$ be a model of $A$ as in §\ref{model:ss}, and let $\calE_0$ and $\calF_0$ be the $F$-isocrystals defined in §\ref{Ete:ss}. If for some $n>0$, there exists a morphism $\calE\to\calO_{X_0}^{\oplus n}$ such that the image of $\calF$ is non-zero, then $\Tr_{k/\F}(A)\neq 0$.
	\end{prop}
	\begin{proof}
		Write $\calE \twoheadrightarrow \calT$ for the maximal trivial quotient of $\mathcal E$. Since $\calE\twoheadrightarrow \calT$ is maximal, it is preserved by the action of $F$ on $\calE$, hence it defines a quotient $\calE_0\twoheadrightarrow \calT_0$, where $\calT_0$ is the maximal constant quotient of $\calE_0$. We base change this quotient from $X_0$ to $X$, as a morphism of $F$-isocrystals, obtaining a quotient $(\calE_0)_X\twoheadrightarrow (\calT_0)_X$ in $\Fisoc(X)$. Since $\calT_0$ is an $F$-isocrystal coming from $\Spec(\Fq)$, the $F$-isocrystal $(\calT_0)_X$ comes from $\Spec(\F)$. Thanks to the Dieudonn\'e--Manin decomposition, $(\calT_0)_X$ decomposes in $\Fisoc(X)$ as $$(\calT_0)_X=(\calT_0')_X\oplus (\calO_X^{\oplus m},\id)$$ where $(\calO_X^{\oplus m},\id)$ is the maximal unit-root subobject of $(\calT_0)_X$ and $m\geq 0$. As $\calF_0$ is unit-root, it is sent via the quotient $(\calE_0)_X\twoheadrightarrow (\calT_0)_X$ to a unit-root $F$-subisocrystal of $(\calT_0)_X$. By the assumption, the image of $(\calF_0)_X$ is non-zero, so that $m$ has to be greater than $0$. Thus $(\calE_0)_X$ admits a surjective morphism to $(\calO_X,\id)$ in $\Fisoc(X)$. Since $\mathbb{D}_{\Qpbar}$ is fully faithful, such a quotient comes from a monomorphism $(\Qp/\Zp)_{X,\Qpbar}\hookrightarrow\frakA[p^\infty]_{\Qpbar}$ in the category of $p$-divisible groups with coefficients in $\Qpbar$. After multiplying by some element in $\overline \Q_p^{\times}$, the map defines an injection $(\Qp/\Zp)_{X}\hookrightarrow\frakA[p^\infty]$ of $p$-divisible groups over $X$. By Theorem \ref{Lang-Neron:t}, this implies that $\Tr_{k/\F}(A)\neq 0$.
	\end{proof}
	
	\begin{proof}[Proof of Theorem \ref{perfect-p-tor:t}]
		Assume by contradiction that $|A[p^{\infty}](k^{\mathrm{perf}})|=\infty$. By Corollary \ref{infty-isoc:c}, we have a quotient $(\calF_0)_X\twoheadrightarrow (\calO_X,\id)$ in $\Fisoc(X)$. Forgetting the Frobenius structure we get a quotient $\calF_X\twoheadrightarrow \calO_X$ in $\Isoc(X)$. Let $\calF_X\twoheadrightarrow \calO_{X}^{\oplus n}$ be the maximal trivial quotient of $\calF_X$, where $n>0$. By maximality, the action of the absolute Galois group of $\Fq$ on $\calF_X$ preserves this quotient. Therefore, the morphism $\calF_X\twoheadrightarrow \calO_X^{\oplus n}$ descends to a quotient $\calF\twoheadrightarrow\calT$ in $\Isoc(X_0)$. Since the descent datum defining $\calT$ is the pullback of a descent datum over $\Spec(\F)$, the convergent isocrystal $\calT$ is isomorphic to the pullback of a convergent isocrystal over $\Fq$. Notice that every convergent isocrystal over $\Spec(\Fq)$ is trivial, so that $\calT\simeq \calO_{X_0}^{\oplus n}$. Now, thanks to the fact that $\calE^\dagger_0$ is semi-simple, as explained in §\ref{Ete:ss}, we can use Corollary \ref{pure-geom-surj:c} and extend the quotient $\calF\twoheadrightarrow \calO_{X_0}^{\oplus n}$ to a quotient $\calE\twoheadrightarrow\calO_{X_0}^{\oplus n}$ in $\Isoc(X_0)$. By Proposition \ref{trace-p-adic:p}, this would imply that $\mathrm{Tr}_{k/\mathbb{F}}(A)\neq 0$, which leads to a contradiction.
	\end{proof}
	
	\begin{rema}\label{Langlands:r}The proofs of Theorem \ref{frob-tori:t} and Proposition \ref{constant-objects:p} rely on the known cases of Deligne's conjecture. In particular, they rely on the Langlands correspondence for lisse sheaves proven in \cite{Laf} and the Langlands correspondence for overconvergent $F$-isocrystals proven in \cite{Abe}. We want to point out that to prove Theorem \ref{perfect-p-tor:t} we do not need this theory.
	More precisely, when $\calE_0^\dagger$ is an overconvergent $F$-isocrystal which comes from geometry (cf. §\ref{weak:ss}), Theorem \ref{frob-tori:t} can be proven more directly, as explained in \cite[Remark 4.2.13]{Dad}. Even in the proof of Proposition \ref{constant-objects:p}, if $\calE_0^\dagger$ comes from geometry we do not need [\textit{ibid.}, Theorem 3.6.6]. This applies, for example, to the overconvergent $F$-isocrystals appearing in §\ref{perfect-p-tor:s}.	To summarize, in the proof of Theorem \ref{perfect-p-tor:t}, the main ingredients are: class field theory (see Remark \ref{gmt:r}), the theory of weights for overconvergent $F$-isocrystals, and Kedlaya's full faithfulness theorem.
	\end{rema}

	\bibliographystyle{ams-alpha}

\end{document}